  \def\ZR{{\mathbb R}}
  \def\ZQ{{\mathbb Q}}
  \def\ZN{{\mathbb N}}
\def\Ker#1{\mathop{\rm Ker}\nolimits#1}
\def\Imm#1{\mathop{\rm Im}\nolimits#1}
\def\div{\mathop{\rm div}\nolimits}
\def\norm#1.#2.{\|#1\|_{#2}}
\def\Norm#1.#2.{\big\|#1\big\|_{#2}}
\def\NOrm#1.#2.{\bigg\|#1\bigg\|_{#2}}
\def\NORm#1.#2.{\Big\|#1\Big\|_{#2}}
\def\NORM#1.#2.{\Bigg\|#1\Bigg\|_{#2}}
\def\vec#1{{\mathchoice{\mbox{\boldmath$\displaystyle#1$}}
{\mbox{\boldmath$\textstyle#1$}}
{\mbox{\boldmath$\scriptstyle#1$}}
{\mbox{\boldmath$\scriptscriptstyle#1$}}}}
\def \0b{{\hbox{\boldmath $0$}}}
 \newcommand{\bb}{\vec{b}}\def \b{\vec{b}}
\newcommand{\eb}{\vec{e}} \newcommand{\fb}{\vec{f}}
 \newcommand{\nb}{\vec{n}}
\newcommand{\qb}{\vec{q}} 
 \newcommand{\tb}{\vec{t}}
 \newcommand{\vb}{\vec{v}}
 \newcommand{\xb}{\vec{x}}
\newcommand{\yb}{\vec{y}} 
\newcommand{\Abb}{{\bf A}}
\newcommand{\Ibb}{{\bf I}} 
\newcommand{\Kbb}{{\bf K}} \newcommand{\Lbb}{{\bf L}}
\newcommand{\Mbb}{{\bf M}}
\newcommand{\Cb}{\vec{C}}
\newcommand{\Qb}{\vec{Q}}
\def \gamb{\vec{\gamma}} 
 \def \etab{\vec{\eta}}
 \def \xib{\vec{\xi}}
 \def \phib{\vec{\phi}}
\def \psib{\vec{\psi}}
\newcommand{\cC}{{\cal C}} \newcommand{\cD}{{\cal D}}
\newcommand{\cE}{{\cal E}} 
\newcommand{\cG}{{\cal G}} \newcommand{\cH}{{\cal H}}
\newcommand{\cI}{{\cal I}} 
 \newcommand{\cL}{{\cal L}}
 \newcommand{\cP}{{\cal P}}
 \newcommand{\cT}{{\cal T}}
 \newcommand{\cV}{{\cal V}}
\newcounter{primjer}[section]
\newcounter{tvrdnja}[section]
\newcounter{zadatak}[section]
\renewcommand\sharp{\#}
\newcommand\ie{\emph{i.e.}\xspace}
\newcommand\eg{\emph{e.g.}\xspace}
\newcommand\Ltwo{\mathrm{L}^2}
\newcommand\Linf{\mathrm{L}^\infty}
\newcommand\Hsp[1]{\mathrm{H}^{#1}}
\newcommand\Hone{\mathrm{H}^1}
\newcommand\Honezero{\mathrm{H}^1_0}
\newcommand\Htwo{\mathrm{H}^2}
\newcommand\xnorm[2]{\left\| #1 \right\|_{#2}}
\newcommand\bnorm[2]{\big\| #1 \big\|_{#2}}
\newcommand{\homg}{{\rm hom}}
\newcommand{\init}{{\rm init}}
\newcommand{\Omegad}{{\Gamma^\delta}}
\newcommand{\OmegadD}{{\Gamma^\delta_{\textrm{D}}}}
\newcommand{\Gammad}{{\Gamma_Y^\delta}}
\newcommand{\Hper}{\Hone_{\sharp}}
\newcommand{\Hperaver}{\Hone_{\star}(\Gamma_Y)}
\newcommand{\Loper}{\Ltwo(\Omega; \Ltwo(\Gamma_Y))}
\newcommand{\Hoper}{\Ltwo(\Omega; \Hper)}
\newcommand{\zerob}{\mathbf{0}}
\newcommand{\mud}{{\mu_\delta}}
\definecolor{mydarkcyan}{rgb}{0,0.5,0.5}
\definecolor{mygreen}{rgb}{0,0.7,0}
\definecolor{matkopurple}{RGB}{204,0,204}
\newcommand{\jtt}[1]{{\color{blue}#1}}
\newcommand{\jttd}[1]{\sout{{\bf\color{blue}#1}}}
\newcommand{\ks}[1]{{\color{mygreen}#1}}
\newcommand{\ksd}[1]{\sout{{\bf\color{mygreen}#1}}}
\newcommand{\ads}[1]{{\color{mydarkcyan}#1}}
\newcommand{\Abbh}{\Abb_\homg}
\newcommand{\ww}{w}
\newcommand{\tf}{t_{\mathsf{f}}}
\newcommand{{\cycle}}{{circuit}}
\newcommand{\cycles}{{circuits}}
\newcommand{{\trail}}{{trail}}
\newcommand{\trails}{{trails}}
\newcommand{\GYper}{\Gamma_{Y,\#}}
\let\leq=\leqslant
\newtheorem{theorem}{Theorem}[section]
\newtheorem{lemma}[theorem]{Lemma}
\newtheorem{corollary}[theorem]{Corollary}
\newtheorem{example}{Example}
\newtheorem{definition}[theorem]{Definition}
\newtheorem{remark}[theorem]{Remark}
\numberwithin{equation}{section}
\newcommand{\diag}{\mathop{\mathrm{diag}}}
\title{Homogenization of the time-dependent heat equation on planar one-dimensional periodic structures}
\author{Matko Ljulj\footnote{Department of Mathematics, Faculty of Science, University of Zagreb, Croatia,  \texttt{mljulj@math.hr}}, Kersten Schmidt\footnote{Fachbereich Mathematik, AG Numerik und Wissenschaftliches Rechnen, Technische Universit\"{a}t Darmstadt, Darmstadt, Germany}, Adrien Semin\addtocounter{footnote}{-1}\footnotemark{}\ \ and Josip Tamba\v{c}a\footnote{Department of Mathematics, Faculty of Science, University of Zagreb, Croatia, \texttt{tambaca@math.hr}}}
\date{\today}
\begin{document}

\maketitle

\begin{abstract}
  In this paper we consider the homogenization of a time-dependent heat conduction
  problem on a planar one-dimensional periodic structure.
  On the edges of a graph the one-dimensional heat equation is posed,
  while the Kirchhoff junction condition is applied at all (inner) vertices.
  Using the two-scale convergence adapted to homogenization of
  lower-dimensional problems we obtain the limit homogenized problem
  defined on a two-dimensional domain that is occupied by the mesh when the mesh period $\delta$ tends to $0$.
  The homogenized model is given by the classical heat equation
  with the  conductivity tensor depending on the unit cell graph only through
  the topology of the graph and lengthes of its edges.
  We show the well-posedness of the limit problem and give a purely algebraic formula for
  the computation of the homogenized conductivity tensor.
  The analysis is completed by numerical experiments showing
  a convergence to the limit problem where the convergence order in~$\delta$
  depends on the unit cell pattern. 
\end{abstract}

\tableofcontents

\section{Introduction}
\label{sec1}
In this paper we deal with homogenization of
the time-dependent heat conduction on a periodic graph
$\Omegad$ of period $\delta$ in a rectangular subdomain of $\ZR^2$
(see Figure~\ref{fig:example_Omega_delta} for examples of such domains~$\Omegad$
and Figure~\ref{fig:example_pattern_Gamma_Y}
for examples of unit cell pattern).
On edges of the graph the heat conduction is modeled by the
one-dimensional heat equation
\begin{align}
  \rho c_p \partial_t u^\delta + a \partial_{\Gamma}^2 u^\delta = f^\delta \quad \text{ on } \Omegad
  \label{eq:Heat_equation}
\end{align}
where $\partial_{\Gamma}^2$ is the second order derivative along the edges of $\Omegad$.
The Kirchhoff junction law \cite{Kuchment:2002,Rubinstein.Schatzman:2001} is applied at the (inner) vertices
and the problem is completed by initial and boundary condition
and will be detailed in Sec.~\ref{sec:description}.
Such a model on a lower-dimensional domain can be justified as limit model
of the heat conduction on a graph-like domain with thin edges -- also called fat graph
--
with the technique of~\cite{Kuchment:2002} or~\cite{Edo} (see also~\cite{Joly.Semin:2008} for a higher order model
for the wave equation).

For elliptic problems on lower-dimensional domains Bouchitt\'{e}~\emph{et.al.}~\cite{BBS, bouchitte2001homogenization,bouchitte2002homogenization, bouchitte2004homogenization}
and Zhikov~\cite{Zhikov} introduced and used an extension of the two-scale convergence
on measures to obtain limit problems. %
It is based on an extension of the two-scale convergence on surfaces
that was introduced by Neuss-Radu in~\cite{NeussRadu}
and in \cite{RaduDiploma} and by Allaire, Damlamian and Hornung in
\cite{AllaireDamlamianHornung} to incorporate boundary conditions with oscillating coefficients.
The two-scale convergence on measures was applied in
\cite{bouchitte2001homogenization}, \cite{bouchitte2002homogenization}
in a variational approach
using the $\Gamma$--convergence for the energy
functional and scalar problems.
In~\cite{bouchitte2004homogenization}
the technique of fattening of lower-dimensional problems is also applied
for the vector unknown variational problem. In~\cite{Zhikov2} the
measure generalization is used to analyze the elasticity problem on
singular structures.

The homogenization of the time-dependent heat equation on planar
one-dimensional periodic structures has not been studied
so far and shall be addressed in this work.
Using the \emph{mesh two-scale convergence} we rigorously derive the limit
homogenized model which is given by the heat conduction problem on the
two dimensional domain occupied by the mesh.
We focus on the homogenization of the domain so in order to simplify
the procedure we assume that the conductivity is scalar and
non-oscillatory. Furthermore we restrict to non-oscillatory initial
temperature.
As in the homogenization of the heat equation the time can be kept as a parameter in the problem, see \eg~\cite{CD}.
Thus it is not surprising that the homogenized conductivity tensor is the same as for
the stationary diffusion equation; compare with \cite{bouchitte2001homogenization},
see also~\cite{Panasenko} for a formal asymptotic expansion.
In difference to the previous works we will introduce a purely algebraic formula
for the computation of the homogenized conductivity tensor.
The homogenized tensor depends on the unit cell graph only through the topology of the unit cell graph
(incidence matrix of the graph) and the lengthes of the edges of the graph. Depending on the unit cell geometries
it can be a multiple of the unit matrix or a diagonal or non-diagonal matrix valued function.
Moreover, the homogenized conductivity tensor
turns out to be positive definite which then leads to the existence
and uniqueness result for the limit problem.

In \cite{CSJP} the homogenization on a $3D$ fattened graph for a simple
geometry like in Figure~\ref{fig:example_pattern_Gamma_Y}(a) is done first and
the thickness is taken to zero last.  In our approach we start with the
one-dimensional model (i.e. the thickness is first taken to zero) and then we
do the homogenization.  As already noted in~\cite{bouchitte2001homogenization}
the limit model for the stationary diffusion problem on fattened graphs in 3D
that is obtained when thickness of the fat edges and the period tend both to
zero is the same independently of the way the limiting is performed.  Thus it
is not a surprise that the models in \cite{CSJP} coincide with the two-scale
limit of the one-dimensional limit model.  However, it is important to consider
homogenization on the mesh objects directly to develop techniques where
thickening is not possible, either because three-dimensional equations are too
complicated or do not exist or existing techniques using an approach based on measures can not be applied.

\begin{figure}[!bt]
  \null\hfill
  \begin{tikzpicture}
    \draw[dashed, black!50!white] (0,0) rectangle (4,3);
    \pgfmathparse{0.5}\let\valuedelta\pgfmathresult;
    \pgfmathparse{4 / \valuedelta - 1}\let\nmaxx\pgfmathresult;
    \pgfmathparse{3 / \valuedelta - 1}\let\nmaxy\pgfmathresult;

    \foreach \posx in {0,...,\nmaxx}
    {
      \pgfmathparse{\posx * \valuedelta}\let\valuex\pgfmathresult;
      \begin{scope}[xshift = \valuex cm]
        \foreach \posy in {0,...,\nmaxy}
        {
          \pgfmathparse{\posy * \valuedelta}\let\valuey\pgfmathresult;
          \begin{scope}[yshift = \valuey cm]
            \draw (0,\valuedelta) ++ (-90:0.5*\valuedelta) arc(-90:-60:\valuedelta);
            \draw (\valuedelta,0) ++ (90:0.5*\valuedelta) arc(90:120:\valuedelta);
            \draw (0.5*\valuedelta,0) -- (0.5*\valuedelta,\valuedelta);
          \end{scope}
        }
      \end{scope}
    }
    \node at (2,-0.5) {(a) $\delta=0.5$};
  \end{tikzpicture}
  \hfill
  \begin{tikzpicture}
    \draw[dashed, black!50!white] (0,0) rectangle (4,3);
    \pgfmathparse{0.25}\let\valuedelta\pgfmathresult;
    \pgfmathparse{4 / \valuedelta - 1}\let\nmaxx\pgfmathresult;
    \pgfmathparse{3 / \valuedelta - 1}\let\nmaxy\pgfmathresult;

    \foreach \posx in {0,...,\nmaxx}
    {
      \pgfmathparse{\posx * \valuedelta}\let\valuex\pgfmathresult;
      \begin{scope}[xshift = \valuex cm]
        \foreach \posy in {0,...,\nmaxy}
        {
          \pgfmathparse{\posy * \valuedelta}\let\valuey\pgfmathresult;
          \begin{scope}[yshift = \valuey cm]
            \draw (0,\valuedelta) ++ (-90:0.5*\valuedelta) arc(-90:-60:\valuedelta);
            \draw (\valuedelta,0) ++ (90:0.5*\valuedelta) arc(90:120:\valuedelta);
            \draw (0.5*\valuedelta,0) -- (0.5*\valuedelta,\valuedelta);
          \end{scope}
        }
      \end{scope}
    }
    \node at (2,-0.5) {(b) $\delta=0.25$};
  \end{tikzpicture}
  \hfill\null
  \caption{Example of one-dimensional periodic domain $\Omegad$
    for values of $\delta$ in the planar rectangle $\Omega$ with side lengthes  $L_1 = 4$, $L_2 = 3$
    for the pattern configuration  shown in Fig.~\ref{fig:example_pattern_Gamma_Y}(b).}
  \label{fig:example_Omega_delta}
\end{figure}
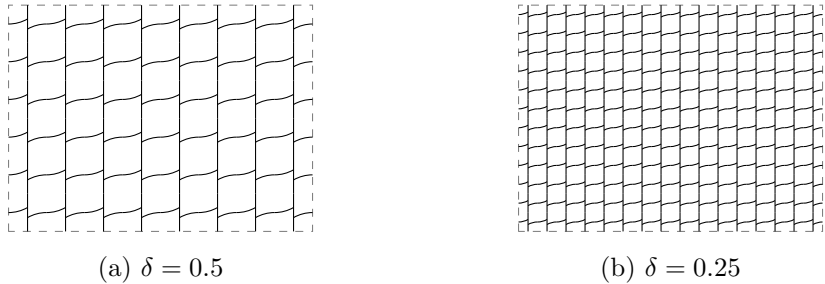

The article is organized as follows. In Section~\ref{sec:description} we
describe the problem in detail and present the main results: %
a priori stability estimates, the homogenized model and weak convergence as well as convergence in norm. In order to do asymptotics
of the problem we need the compactness results. We use the compactness result
for $\Ltwo(\Omegad)$ from \cite{RaduDiploma} and derive the compactness result
for $\Hone(\Omegad)$ and for $\partial_t u^\delta$ in $\Ltwo(\Omegad)$. Some
results from the graph theory give us technical conditions that allow us to
prove the compactness theorem. Then in Section~\ref{sec:homogenized_model:convergence_in_norm} we prove the
\textit{a priori} estimates for $u^\delta$, the solution of the problem on the
one-dimensional mesh~$\Omegad$.
In Section~\ref{sec:tp} we prove some technical results and in Section~\ref{sec:homogenized_model:properties} the main properties of the homogenized model.
Using two-scale convergences that follow from
the compactness theorems we are able to make two limits in the $\delta$-problem
and obtain the equation for the corrector and the limit problem. The corrector
equation leads to the formulation of the canonical problem which is then used
to build the homogenized tensor (Section~\ref{sec:homogenized_model:convergence}).
In Section~\ref{sec:homogenized_model:convergence_in_norm} we prove the corresponding convergence in norm.
In Section~\ref{sec5} we formulate the
algebraic method to compute the homogenized tensor and compute it for five
different patterns. In Section~\ref{sec:numerics}
we compare by numerical experiments the solution of the heat equation on the $\delta$-periodic graph
with the one of the homogenized problem.

\section{Description of the problem and the main result}
\label{sec:description}

In this section we formulate the model including the geometrical setting we consider in this
paper. We start with the description of the geometry.

\begin{figure}[bt]
  \vspace*{-1cm}
  \null\hfill
  \begin{tikzpicture}
    \draw[dashed, black!50!white] (-2,-2) rectangle (2,2);
    \draw[white] (-2,4) -- (2,-4);
    \draw (-2,0) -- (2,0);
    \draw (0,-2) -- (0,2);

    \draw[thick,->] (-1.5,0) -- (-0.5,0) node [pos=0.5,below] {$\tb$};
    \draw[thick,->] (0.5,0) -- (1.5,0) node [pos=0.5,below] {$\tb$};
    \draw[thick,->] (0,-1.5) -- (0,-0.5) node [pos=0.5,right] {$\tb$};
    \draw[thick,->] (0,0.5) -- (0,1.5) node [pos=0.5,right] {$\tb$};

    \node at (0,-2.5) {(a)};
  \end{tikzpicture}
  \hfill
  \begin{tikzpicture}
    \draw[dashed, black!50!white] (-2,-2) rectangle (2,2);
    \draw (-2,4) ++ (-90:4) arc(-90:-60:4);
    \draw (2,-4) ++ (90:4) arc(90:120:4);
    \draw (0,-2) -- (0,2);

    \draw[thick,->] (-2,4) ++ (-80:4) arc(-80:-70:4) node [pos=0.5,below] {$\tb$};
    \draw[thick,->] (2,-4) ++ (110:4) arc(110:100:4) node [pos=0.5,below] {$\tb$};
    \draw[thick,->] (0,-1.5) -- (0,-0.9) node [pos=0.5,right] {$\tb$};
    \draw[thick,->] (0,-0.3) -- (0,0.3) node [pos=0.5,right] {$\tb$};
    \draw[thick,->] (0,0.9) -- (0,1.5) node [pos=0.5,right] {$\tb$};

    \node at (0,-2.5) {(b)};
  \end{tikzpicture}
  \hfill\null
  \vspace*{-1cm}
  \caption{Two different examples of valid patterns $\Gamma_Y$}
  \label{fig:example_pattern_Gamma_Y}
\end{figure}
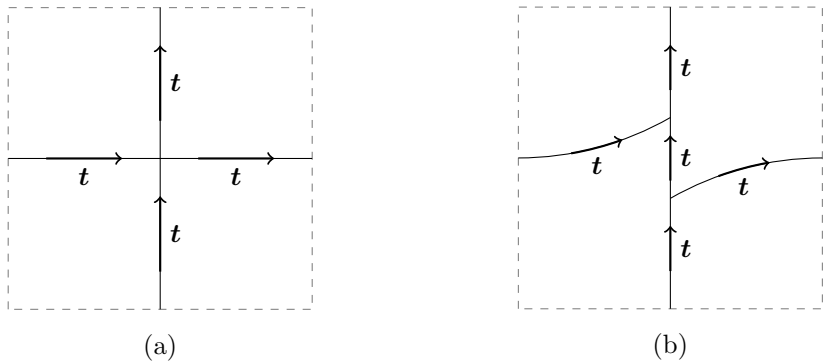

Let us consider a one-dimensional manifold $\Gamma_Y \subset [0,1]^2$
--~the \emph{unit mesh}~-- that can be described by a connected and oriented
graph structure $(\cV,\cE)$ (please see~\cite{MR0411988,MR2524249} for the
usual terminology in the graph theory), where $\cV$ and $\cE$ denote
respectively the set of vertices and the set of edges. We assume that each edge
$e \in \cE$ admits a $W^{1,\infty}$ and bijective arc length parametrization
$\gamb$ from an open interval, that is directed in accordance with the
orientation of the graph.
Then, we denote for $\yb \in e$ the unitary
tangent vector $\tb(\yb) := \gamb'(\gamb^{-1}(\yb))$, see Fig.~\ref{fig:example_pattern_Gamma_Y}
for two different examples. %

We will consider trails and {\cycles} (thus no repeating edges are allowed) that follow or may not follow the orientation of the graph.
If $\Gamma_Y$ is transformed to an undirected graph $\Gamma_Y'$ where each edge remains its parametrization,
then we denote by $\cT(\Gamma_Y')$ the set of all {\trails} in $\Gamma_Y'$.
Moreover, we define for each trail $T$ of the undirected graph $\Gamma_Y'$ of $\Gamma_Y$ the \emph{orientation function} $\chi_T: \Gamma_Y \to \{-1,0,+1\}$
that takes the value $\pm1$ on all points $\yb$ in an interior of an edge $e$ of $T$,
the sign indicating if the edge is passed according to the orientation of $e$ in the oriented graph $\Gamma_Y$ ($+1$) or not ($-1$),
and $0$ otherwise.

As the unit mesh $\Gamma_Y$ shall be connected in all directions to repetitions of itself
translated by $\eb_1 = (1,0)$ or $\eb_2 = (0,1)$
we assume that it touches each side of the unit square in such a way that
\begin{equation}\label{opposite}
  \exists \yb_1, \yb_2 \in \Gamma_Y : \yb_1 + \eb_1, \yb_2 + \eb_2 \in \Gamma_Y.
\end{equation}%
Note here that $\yb_1$ and $\yb_1 + \eb_1$ as well as $\yb_2$ and $\yb_2 + \eb_2$ are connected by
a trail of $\Gamma_Y'$ since $\Gamma_Y$ is connected.
This trail becomes a {\cycle} in the graph $\GYper$ obtained
by identifying
all such points on the sides of the square $[0,1]^2$,
\ie, $\yb$ with $\yb+\eb_1$ and $\yb$ with $\yb+\eb_2$
for $\yb \in \partial [0,1]^2$ -- and call them opposite points -- meaning that they correspond to the same vertex of $\cV$.
Moreover, all corner points $\yb \in \{(0,0),(1,0),(0,1),(1,1)\}\cap\Gamma_Y$, if they exist, are identified and represented by
one vertex in~$\cV$. %
The set of all {\cycles} in $\Gamma_{Y,\#}'$, the undirected version of the periodic graph $\Gamma_{Y,\#}$, we denote by $\cC(\Gamma_{Y,\#}')$.

Let the $\Omega$ be a rectangular domain of lengths $L_1$ and $L_2$,
such that the ratio $L_1 / L_2$ belongs to $\ZQ$. Then, we consider only \emph{periods}
$\delta > 0$ such that $N_1^\delta := L_1 / \delta$ and
$N_2^\delta := L_2 / \delta$ are integers
and 
refer as ``for any value of $\delta$''
if a statement holds for any such $\delta$.


Finally, we consider the plane mesh
$\Omegad$ depicted illustrative in Fig.~\ref{fig:example_Omega_delta} defined by
\begin{equation}
  \label{eq:lattice}
  \Omegad :=
  \bigcup_{n_1=0}^{N_1^\delta-1}
  \bigcup_{n_2=0}^{N_2^\delta-1} \Gamma_Y^\delta(n_1,n_2), \qquad
  \Gamma_Y^\delta(n_1,n_2) := \delta \big( \Gamma_Y + n_1 \eb_1 +
  n_2 \eb_2 \big)
\end{equation}
that can be described by a connected graph, where each vertex of $\Gamma^\delta$ corresponds to exactly one point in $\Gamma^\delta$,
\ie, there is no identification of points and so no periodic structure of $\Gamma^\delta$.
Note that opposite points in neighboring repetitions of $\Gamma^\delta_Y(n_1,n_2)$
coincide, making them the same vertex of $\Omegad$.

We introduce now the notion of derivatives and integrals on the meshes
$\Omegad$ and $\Gamma_Y$.

\begin{definition}[Derivative]
Let $e$ be an oriented edge with a $W^{1,\infty}$ and bijective arc length parametrization $\gamb$ that is in accordance with the orientation of $e$ and
let $u$ be a continuous function on $e$ such that $u \circ \gamb$ is differentiable.
We define its derivative, for all points $\xb \in e$, and $s = \gamb^{-1}(\xb)$ by
  \begin{equation}
    \label{eq:derivative}
\partial_\Gamma u(\xb) := \frac{\mathrm{d}}{\mathrm{d}s} \big( u \circ \gamb\big)(s)\ .
  \end{equation}
\end{definition}
Note that the derivative is orientation dependent.
However, due to the second order derivative in~\eqref{eq:Heat_equation} the solution $u^\delta$
and the variational problems will not be orientation dependent, so any fixed orientation will suit the setting.
Edge by edge we can extend this derivative on functions defined on graphs $\Gamma_Y$ and $\Omegad$.


\begin{definition}[Slow and fast derivative]
  \label{def:slow and fast derivative}
  Given a function $u$ on $\Omega \times \Gamma_Y$.
  If $u(\cdot,\yb)$ is differentiable on $\Omega$ for all $\yb \in \Gamma_Y$
  we define its \emph{slow derivative} by
  \begin{equation*}
\nabla_\xb u(\xb,\yb) \cdot    \tb(\yb)\,.
  \end{equation*}
Moreover, we define its \emph{fast derivative} $\partial_{\Gamma,\yb}$
  for all $\xb \in \Omega$ on which $u(\xb,\cdot)$ is differentiable on $\Gamma_Y$
  as the derivative $\partial_\Gamma$ with respect to $\yb$.
\end{definition}
For a function $u$ differentiable on $\Omega \times \Gamma_Y$, the function
$\xb \mapsto u\big(\xb,\tfrac{\xb}{\delta}\big)$ is differentiable on
$\Omegad$ and by the chain rule,
\begin{equation}
  \label{eq:deriv_on_mesh:micro_and_macro}
\partial_{\Gamma} \left(u\big(\xb,\tfrac{\xb}{\delta}\big)\right)
  = \nabla_\xb u \big(\xb,\tfrac{\xb}{\delta}\big) \cdot \tb(\tfrac{\xb}{\delta}) +
  \tfrac{1}{\delta} \partial_{\Gamma,\yb} u
  \big(\xb,\tfrac{\xb}{\delta}\big).
\end{equation}

Note that even for functions $u$ that are constant in $\yb$ (independent of $\yb$),
its slow derivative $\nabla_\xb   u(\xb) \cdot \tb(\yb)$ depends on the fast variable $\yb$ through the tangent field on $\Gamma_Y$.
This mean especially that for functions that are differentiable on $\Omega$ it holds for any $\delta > 0$
\begin{align}
  \label{eq:deriv_on_mesh:macro}
  \partial_\Gamma u(\xb) &= \nabla u(\xb)\cdot\tb(\tfrac{\xb}{\delta}) \quad\text{ on } \Gamma^\delta \subset \Omega\ .
\end{align}
For simplicity, also if $u$ is a function of $(t,\xb) \in (0,\tf)\times\Omega$
we use $\nabla u$ for the gradient of $u$ with respect to $\xb$.


\begin{definition}[Integral]
  Given a function $u$ defined on $\Gamma_Y$, we say that $u$ is
  integrable on $\Gamma_Y$ if $u$ is integrable on each edge $e \in
  \cE$. In that case, the integral of $u$ over $\Gamma_Y$ is given by
  \begin{equation*}
    \int_{\Gamma_Y} u(\yb) \,\mathrm{d}s(\yb) := \sum_{e \in \cE} \int_e
    u(\yb) \,\mathrm{d}s(\yb),
  \end{equation*}
  where the integral of $u$ over the edge $e$ is defined as the curve
  integral of the first kind, that is $\int_0^\ell u(\gamb(s))\,\mathrm{d}s$ for an arc length parametrization $\gamb: [0,\ell] \to e$.
\end{definition}

With this we can define in particular the length $\left|\Gamma_Y\right| := \int_{\Gamma_Y} \mathrm{d}s(\yb)$ of $\Gamma_Y$,
and the length $|\Gamma^\delta| =|\Omega| |\Gamma_Y|/\delta$ of~$\Gamma^\delta$ is defined for any  $\delta > 0$ where
$|\Omega|$ is the area of $\Omega = [0,L_1]\times [0,L_2]$.

For an oriented graph $\cG$ we denote by $\Ltwo(\cG)$ the Banach space of
functions $u$ such that $|u|^2$ is integrable on~$\cG$
and by $\Hone(\cG)$ the Hilbert space
\begin{align*}
  \Hone(\cG) &= \left\{u \in \Ltwo(\cG): \partial_{\Gamma} u
  \in \Ltwo(\cG), u \text{ continuous on } \cG \right\},
\end{align*}
the latter equipped with the norm
\begin{align*}
  \|v\|_{\Hone(\cG)}^2 &= \|v\|_{\Ltwo(\cG)}^2 +
  \|\partial_{\Gamma} v\|_{\Ltwo(\cG)}^2.
\end{align*}
In particular we will need to consider the spaces $\Ltwo(\Gamma_Y), \Hone(\Gamma_Y)$ and $\Hone(T)$ for any trail $T$ of $\Gamma'_Y$.
Note that orientation on a graph has to be chosen in order to define derivatives but there is no difference between $\Ltwo(\cG)$ and $\Ltwo(\cG')$ and $\Hone(\cG)$ and $\Hone(\cG')$, when $\cG'$ is the undirected graph of $\cG$.

As $\Gamma_{Y,\#}$ is a periodic manifold of $[0,1]^2$ the continuity of
$u \in \Hper(\Gamma_Y) := \Hone(\Gamma_{Y,\#})$ means that $u(\yb_1) = u(\yb_2)$ whenever $\yb_1$ and
$\yb_2$ correspond to the same vertex of $\Gamma_Y$, in particular in the
opposite points.
Note, that $H^1(T)$ for {\trails} $T$ of $\Gamma_Y'$
allows for functions that are discontinuous at the opposite points.

Let $\Gamma_D \subset \partial \Omega$ be a part of the boundary of
$\Omega$ that corresponds to a finite union of line segments of positive
length. For any $\delta$, let us define $\OmegadD :=
\partial {\Omegad} \cap \Gamma_D$. By assumption on $\Gamma_D$,
$\OmegadD$ is different from the empty set for sufficiently small $\delta$. We then
define the space $\cH^\delta$ as the subspace of functions $v \in
\Hone(\Omegad)$ whose trace vanishes on $\OmegadD$, \ie
\begin{equation*}
  \cH^\delta := \left\{ v \in \Hone(\Omegad) : v = 0 \mbox{ on } \OmegadD \right\}.
\end{equation*}
We consider now the following problem given in the weak formulation for a time
$\tf > 0$, a source term $f^\delta \in \Ltwo(0,\tf; \Ltwo(\Omegad))$ and an initial data
$u_\init \in \cH^\delta$: 
find $u^\delta$ in $\Ltwo(0,\tf;\cH^\delta)$ with $\partial_t u^\delta$ in $\Ltwo(0,\tf; \Ltwo(\Omegad))$ such that for all $v^\delta \in \cH^\delta$ and a.e. $t\in (0,\tf)$
\begin{subequations}
 \label{deltaP}
  \begin{align}
    \label{deltaP:1}
  \int_{\Omegad} \hspace{-0.1em}\rho c_p \partial_t{u}^\delta(t,\xb) v^\delta(\xb) \,\mathrm{d}s(\xb)
  + \int_{\Omegad} \hspace{-0.1em}a(\xb) \partial_{\Gamma} u^\delta(t,\xb)
  \partial_{\Gamma} v^\delta(\xb) \,\mathrm{d}s(\xb) &=
  \int_{\Omegad} \hspace{-0.1em}
  f^\delta(t,\xb) v^\delta(\xb) \,\mathrm{d}s(\xb),\\
   u^\delta(0,\xb) &= u_\init(\xb), \xb \in \Omegad.
  \end{align}
\end{subequations}
Here, $\rho$ is the mass density of the material and $c_p$ is the specific
heat capacity that are considered as constants,
$u_\init:\Omegad\to \ZR$ is a given initial temperature and the thermal conductivity
$a:\overline{\Omega} \to \ZR$ is a positive continuous function.
Thus
there exist
\begin{equation}\label{Aa}
  a_{\min} = \min_{\xb\in \Omega} a(\xb) > 0.
\end{equation}
It implies
\begin{equation*}
  a_{\min} \leqslant a(\xb) \leqslant \| a \|_{\Linf(\Omega)},
  \quad \xb \in \Omegad.
\end{equation*}
The problem \eqref{deltaP} is the weak formulation of the heat conduction problem~\eqref{eq:Heat_equation}
with heat source~$f^\delta$, initial condition given by $u_\init$,
homogeneous Dirichlet conditions on $\Gamma^\delta_D$ and
homogeneous Neumann conditions on the remaining part of the boundary.
Also note that the solution $u^\delta$ of the variational formulation~\eqref{deltaP} is independent of the
orientation on the graph even so the definition of the derivative
$\partial_{\Gamma}$
is orientation dependent (with difference only
in sign) since the curve integrals of the first kind
are orientation independent and  in the second term in the left
hand side of~\eqref{deltaP} the opposite signs cancel out.

Following the notations of \cite{bouchitte2001homogenization}, we
  introduce $\mu(\yb) := \tfrac{1}{|\Gamma_Y|} \mathrm{d}s(\yb)$ as the
  normalized periodic one-dimensional Hausdorff measure on $\Gamma_Y$,
  therefore the measure associated to $\Omegad$ is just given as $\mud(\xb) =
  \delta \mu(\xb / \delta)$. As
  $\delta \to 0$, the domain $\Omegad$ tends to a dense subdomain of $\Omega$, so
  that the natural idea is to consider a limit problem of~(\ref{deltaP}) on
  $\Omega$. Following the two-scale homogenization
  theory~\cite{bouchitte2001homogenization,AllaireDamlamianHornung,RaduDiploma},
  the measure that has to be considered for the limit case is not the usual
  Lebesgue measure $\cL^2 \llcorner \Omega$ on $\Omega$ but the tensorial
  product $\cL^2 \llcorner \Omega \bigotimes \mu \llcorner \Gamma_Y$.  

To stress the passage from the 1D mesh to the 2D domain we call the associated two-scale
convergence in this paper the \emph{mesh two-scale convergence}. The convergence is given for functions
defined on a mesh and was already introduced in the literature, see \eg
\cite{AllaireDamlamianHornung, NeussRadu}. However, since we have the time $t$
as an additional parameter we consider the mesh two-scale
convergence with a parameter where we follow the work of Neuss-Radu~\cite{NeussRadu,
  RaduDiploma}.

\begin{definition}[Mesh two-scale convergence]
    \label{dmts} We say that sequence of functions
    $(v^\delta) \subset \Ltwo(0,\tf;\Ltwo(\Omegad))$ with $\delta>0$ for $\delta\to0$ is "mesh
    two-scale convergent" with limit $v^0 \in \Ltwo(0,\tf;\Loper)$ if for
    each
    $\psi \in C^\infty([0,\tf]; C^{\infty}(\Omega; \Hper(\Gamma_Y) ))$ we have
    \begin{equation}\label{vdelta}
    \lim_{\delta \to 0} \delta \int_0^{\tf} \int_{\Omegad} v^\delta (t, \xb) \psi \left(t, \xb,
    \frac{\xb}{\delta}\right) \,\mathrm{d}s(\xb) \, \mathrm{d}t= \int_0^{\tf} \int_{\Omega}
    \int_{\Gamma_Y}v^0(t, \xb, \yb) \psi (t, \xb, \yb) \,\mathrm{d}s(\yb) \,\mathrm{d}\xb \, \mathrm{d}t.
    \end{equation}
    In this case we use the abbreviation $v^\delta \xrightharpoonup{\mathrm{m2s}} v^0$.
\end{definition}

Note that with a simple scaling argument we have that for any $\delta >0$
\begin{equation*}
  \delta \| \mathds{1}_{\Omegad} \|_{\Ltwo(\Omegad)}^2 = \| \mathds{1}_{\Omega
    \times \Gamma_Y} \|_{\Ltwo(\Omega \times \Gamma_Y)}^2,
\end{equation*}
where $\mathds{1}_X$ is the characteristic function of a set $X$. This
explains the factor $\delta$ in the left hand side of the mesh two-scale
convergence and the factor $\sqrt{\delta}$ in the various $\Ltwo$-norm estimates that will follow.


Having in mind the mesh two-scale convergence we consider $f^\delta$ in (\ref{deltaP}) to
depend on the fast variable as well, \ie, there exists a function $f$ on
$(0,\tf) \times \Omega \times \Gamma_Y$ such that $f^\delta(t,\xb) = f(t,\xb,\frac{\xb}{\delta})$
on $(0,\tf) \times \Gamma^\delta$ for all $\delta > 0$. %
In the following lemma we give uniform bounds on the data of the $\delta$-dependent problem.
They are then used to show its well-posedness and {\em a priori} stability estimates on
its solution $u^\delta$ as stated in Theorem~\ref{theo:wellposedness}.
The {\em a priori} estimates will be needed in the proof of the mesh two-scale convergence the homogenized model (Theorem~\ref{theo:convergence}) in Sec.~\ref{sec:homogenized_model:convergence}
and of the convergence in norm (Theorem~\ref{theo:convergence_in_norm}) in Sec.~\ref{sec:homogenized_model:convergence_in_norm}.

\begin{lemma}[Uniform bound of the data]
  \label{lema:bound_data}
  \begin{enumerate}
\item[a)]  Let $f\in \Ltwo(0,\tf;\Ltwo(\Gamma_Y;C(\overline{\Omega})))$ and
  $f^\delta (t,\xb) = f(t,\xb,\frac{\xb}{\delta})$ on
  $(0,\tf)  \times \Gamma^\delta$ for $\delta > 0$.
  Then, for any $\delta >0$ it hold that
  $f^\delta\in \Ltwo(0,\tf ; \Ltwo(\Omegad))$ and
  \begin{equation}
    \label{eq:estimate_f}
    \delta \bnorm{f^\delta}{\Ltwo(0,\tf ; \Ltwo(\Omegad))}^2 \leqslant
    |\Omega| \bnorm{f}{\Ltwo(0,\tf ; \Ltwo(\Gamma_Y; \Linf(\Omega)))}^2.
  \end{equation}
\item[b)]
  If $u_\init \in C^1(\overline{\Omega})$ such that
  $u_\init = 0$ on $\Gamma_D$. %
  Then, for any $\delta > 0$, $u_\init\in \cH^\delta$ and
  \begin{equation}
    \label{eq:estimate_u_init}
 \delta \bnorm{u_\init}{\Hone(\Omegad)}^2 \leqslant
    |\Omega| (1 + 2 |\Gamma_Y|) \bnorm{u_\init}{W^{1,\infty}(\Omega)}^2.
  \end{equation}
  \end{enumerate}
\end{lemma}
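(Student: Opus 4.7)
Both estimates rely on the fact that the structure \eqref{eq:lattice} of $\Omegad$ as a union of scaled, translated copies of $\Gamma_Y$ gives the exact length relation $|\Gamma^\delta|=|\Omega|\,|\Gamma_Y|/\delta$, together with the pointwise bound that one obtains when $\xb/\delta$ is fixed and the function is controlled in $\Linf(\Omega)$ uniformly in $\yb \in \Gamma_Y$.

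For part (a), first I would observe that for every fixed $t$ and every $\xb \in \Gamma^\delta$
\begin{equation*}
  \bigl| f^\delta(t,\xb)\bigr|^2 = \bigl|f\bigl(t,\xb,\tfrac{\xb}{\delta}\bigr)\bigr|^2 \leqslant \bnorm{f\bigl(t,\cdot,\tfrac{\xb}{\delta}\bigr)}{\Linf(\Omega)}^2.
\end{equation*}
Decomposing $\Omegad = \bigcup_{n_1,n_2} \Gamma_Y^\delta(n_1,n_2)$ and applying the change of variables $\yb = \xb/\delta - n_1\eb_1 - n_2\eb_2$, which maps each cell $\Gamma_Y^\delta(n_1,n_2)$ bijectively onto $\Gamma_Y$ with $\mathrm{d}s(\xb) = \delta\,\mathrm{d}s(\yb)$, I would get
\begin{equation*}
  \int_{\Omegad} \bnorm{f\bigl(t,\cdot,\tfrac{\xb}{\delta}\bigr)}{\Linf(\Omega)}^2 \mathrm{d}s(\xb)
  = \delta \sum_{n_1=0}^{N_1^\delta-1}\sum_{n_2=0}^{N_2^\delta-1} \int_{\Gamma_Y} \bnorm{f(t,\cdot,\yb)}{\Linf(\Omega)}^2 \mathrm{d}s(\yb).
\end{equation*}
Since $N_1^\delta N_2^\delta = |\Omega|/\delta^2$, integrating over $t\in(0,\tf)$ and multiplying by $\delta$ yields exactly \eqref{eq:estimate_f}, and finiteness of the right-hand side shows in particular that $f^\delta \in \Ltwo(0,\tf;\Ltwo(\Omegad))$.

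For part (b), I would first check membership in $\cH^\delta$. Since $u_\init \in C^1(\overline\Omega)$, its restriction to each edge of $\Omegad$ is continuous with bounded derivative, hence $u_\init \in \Hone(\Omegad)$; as $\Gamma^\delta_D \subset \Gamma_D$ and $u_\init$ vanishes on $\Gamma_D$, the trace condition in the definition of $\cH^\delta$ is satisfied. For the quantitative estimate, I would bound the $\Ltwo$-term pointwise by $\|u_\init\|_{\Linf(\Omega)}^2$ and use the total length $|\Gamma^\delta|=|\Omega|\,|\Gamma_Y|/\delta$ to get
\begin{equation*}
  \delta \bnorm{u_\init}{\Ltwo(\Omegad)}^2 \leqslant |\Omega|\,|\Gamma_Y|\, \bnorm{u_\init}{\Linf(\Omega)}^2.
\end{equation*}
For the derivative term I would use \eqref{eq:deriv_on_mesh:macro}, namely $\partial_\Gamma u_\init(\xb) = \nabla u_\init(\xb)\cdot\tb(\xb/\delta)$, which gives $|\partial_\Gamma u_\init(\xb)| \leqslant |\nabla u_\init(\xb)|$ since $\tb$ is unitary, and hence analogously
\begin{equation*}
  \delta \bnorm{\partial_\Gamma u_\init}{\Ltwo(\Omegad)}^2 \leqslant |\Omega|\,|\Gamma_Y|\, \bnorm{\nabla u_\init}{\Linf(\Omega)}^2.
\end{equation*}
Adding these and absorbing the two contributions into $\|u_\init\|_{W^{1,\infty}(\Omega)}^2$ (with the mild, non-sharp constant $1 + 2|\Gamma_Y|$ accommodating whichever convention the paper uses for the $W^{1,\infty}$ norm) yields \eqref{eq:estimate_u_init}.

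There is no serious obstacle: the entire argument is pointwise $\Linf$-bound plus the exact counting $|\Gamma^\delta| = |\Omega|\,|\Gamma_Y|/\delta$. The only care needed is to verify the change of variables on each $\delta$-cell and to check that the tangent-vector contraction in \eqref{eq:deriv_on_mesh:macro} is orientation-independent, so the estimates are meaningful regardless of how the edges are parametrized.
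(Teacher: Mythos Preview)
Your proposal is correct and follows essentially the same route as the paper: decompose $\Omegad$ into translated $\delta$-cells, change variables $\xb\mapsto\yb=\xb/\delta-n_1\eb_1-n_2\eb_2$, bound pointwise by the $\Linf(\Omega)$-norm, and use $N_1^\delta N_2^\delta=|\Omega|/\delta^2$. The only minor difference is that for the derivative term the paper inserts a crude factor $2$ when bounding $|\nabla u_\init\cdot\tb|^2$, whereas you use the sharper Cauchy--Schwarz estimate $|\nabla u_\init\cdot\tb|\leqslant|\nabla u_\init|$; either way one lands comfortably under the stated constant $1+2|\Gamma_Y|$.
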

\begin{proof}
  From the definition of norm we get
  \begin{equation*}
    \aligned
    \delta \bnorm{f^\delta}{\Ltwo(0,\tf ; \Ltwo(\Omegad))}^2 &=\delta
    \sum_{n_1, n_2} \int_0^{\tf} \delta \int_{\Gamma_Y} |f(t, \delta(\yb +
    n_1 \eb_1 +n_2 \eb_2), \yb)|^2 ds(\yb)\, \mathrm{d}t\\
    &\leqslant \delta^2 \sum_{n_1, n_2} \int_0^{\tf} \int_{\Gamma_Y} \xnorm{f(t, \cdot, \yb)}{\Linf(\Omega)}^2 ds(\yb)\, \mathrm{d}t
    \leqslant |\Omega| \bnorm{f}{\Ltwo(0,\tf ; \Ltwo(\Gamma_Y; \Linf(\Omega)))}^2,
    \endaligned
  \end{equation*}
and, hence, $f^\delta \in \Ltwo(0,\tf ; \Ltwo(\Omegad))$ for any $\delta > 0$.
  Just in the same way we obtain
  \begin{equation*}
    \delta \bnorm{u_\init}{L^2(\Omegad)}^2 \leqslant
    |\Omega| \bnorm{u_\init}{L^\infty(\Omega)}^2,
  \end{equation*}
  and similarly
  \begin{equation*}
  \aligned
  \delta \xnorm{\partial_{\Gamma} u_\init}{\Ltwo(\Omegad)}^2 
  &=\delta \sum_{n_1, n_2} \delta \int_{\Gamma_Y} \left|
    \nabla_\xb u_\init (t, \delta(\yb + n_1 \eb_1 +n_2 \eb_2)) \cdot
    \tb(\yb) \right|^2 ds(\yb)\\
  &\leq \delta^2 \sum_{n_1, n_2} \int_{\Gamma_Y}
  2\xnorm{\nabla_\xb u_\init (t, \cdot)}{\Linf(\Omega)}^2 ds(\yb)
  \leq 2 |\Omega| |\Gamma_Y |\xnorm{\nabla_\xb u_\init}{\Linf(\Omega)}^2.
  \endaligned
  \end{equation*}
  Hence, $u_\init \in \cH^\delta$ and the proof is complete. \qedhere
\end{proof}


\begin{lemma}\label{ldeltaexist}
  Let $\delta>0$, $f^\delta\in \Ltwo(0,\tf;\Ltwo(\Omegad))$ and
  $u_{\init} \in \cH^\delta$.  Then the problem~\eqref{deltaP}
  has a unique solution that satisfies
  $$
  u^\delta\in \Ltwo(0,\tf; \cH^\delta \cap H^2(\mathring{\Gamma}^\delta))
  \qquad \partial_t{u}^\delta \in
  \Ltwo(0,\tf;\Ltwo(\Omegad)),
  $$
where $\mathring{\Gamma}^\delta := \Gamma^\delta \backslash \cV$
  the graph without its vertices and $H^2(\mathring{\Gamma}^\delta)$
  the Sobolev space of functions that are in $H^2(e)$ on each edge $e$ of $\Gamma^\delta$.
\end{lemma}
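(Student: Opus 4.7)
The plan is to apply the classical Lions--Galerkin variational theory for linear parabolic equations in the Gelfand triple $\cH^\delta \hookrightarrow \Ltwo(\Omegad) \hookrightarrow (\cH^\delta)'$. First I would check that the spatial bilinear form
\[
a^\delta(u,v) := \int_{\Omegad} a(\xb)\,\partial_\Gamma u(\xb)\,\partial_\Gamma v(\xb)\,\mathrm{d}s(\xb)
\]
is continuous on $\cH^\delta\times\cH^\delta$ (immediate from $a\in \Linf(\Omega)$) and coercive on $\cH^\delta$ with respect to $\|\cdot\|_{\Hone(\Omegad)}$. Coercivity follows from \eqref{Aa} combined with a Poincar\'e inequality on the connected one-dimensional mesh $\Omegad$ with vanishing trace on $\OmegadD$; this inequality is obtained by a standard edge-by-edge argument using arc-length parametrizations and connectedness of the graph, valid as soon as $\OmegadD$ has positive length, which is true for small enough $\delta$ by the hypothesis on $\Gamma_D$. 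The resulting Poincar\'e constant may depend on $\delta$, but this is harmless at this stage since $\delta$ is fixed.

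With continuity and coercivity at hand, the classical parabolic existence theorem (\eg, Lions) yields a unique $u^\delta\in \Ltwo(0,\tf;\cH^\delta)$ with $\partial_t u^\delta \in \Ltwo(0,\tf;(\cH^\delta)')$ satisfying \eqref{deltaP:1} together with the initial condition $u^\delta(0)=u_\init$ (the trace at $t=0$ being meaningful since such $u^\delta$ belongs to $C([0,\tf];\Ltwo(\Omegad))$). Uniqueness follows by testing the difference of two solutions against itself and applying Gr\"onwall's lemma.

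The main technical step is to upgrade the time regularity from $(\cH^\delta)'$ to $\Ltwo(\Omegad)$, which is possible precisely because $u_\init \in \cH^\delta$. I would build a Faedo--Galerkin approximation $u^\delta_n$ on an eigenbasis of $a^\delta$, test the $n$-th equation with $\partial_t u^\delta_n$, and integrate in $t$ to obtain
\begin{equation*}
\int_0^t\!\!\int_{\Omegad}\rho c_p\,|\partial_t u^\delta_n|^2\,\mathrm{d}s\,\mathrm{d}\tau
+\tfrac12\int_{\Omegad}a\,|\partial_\Gamma u^\delta_n(t)|^2\,\mathrm{d}s
=\tfrac12\int_{\Omegad}a\,|\partial_\Gamma u_{\init,n}|^2\,\mathrm{d}s
+\int_0^t\!\!\int_{\Omegad}f^\delta\,\partial_t u^\delta_n\,\mathrm{d}s\,\mathrm{d}\tau.
\end{equation*}
Young's inequality absorbs half of the last term into the left-hand side, while the hypotheses $u_\init \in \cH^\delta$ and $f^\delta \in \Ltwo(0,\tf;\Ltwo(\Omegad))$ control the remaining right-hand side uniformly in $n$. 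Passing to the limit then gives $\partial_t u^\delta \in \Ltwo(0,\tf;\Ltwo(\Omegad))$ and $u^\delta \in \Linf(0,\tf;\cH^\delta)$.

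Finally, the $H^2$ regularity on the interior of each edge follows by elliptic regularity edge by edge. For almost every $t$ and every edge $e$ of $\Omegad$, choosing test functions in $C_c^\infty(e)$ (extended by zero to $\Omegad$, hence in $\cH^\delta$) in \eqref{deltaP:1} shows
\[
-a\,\partial_\Gamma^2 u^\delta(t,\cdot) = f^\delta(t,\cdot) - \rho c_p\,\partial_t u^\delta(t,\cdot)\quad \text{in } \cD'(e),
\]
with right-hand side in $\Ltwo(e)$. Together with $u^\delta(t,\cdot)\in\Hone(e)$ this gives $u^\delta(t,\cdot)\in\Htwo(e)$, and hence $u^\delta \in \Ltwo(0,\tf;H^2(\mathring\Gamma^\delta))$. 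The hard part is the energy estimate of paragraph three, which is exactly why the Galerkin detour is needed: $\partial_t u^\delta$ is a priori only in $(\cH^\delta)'$ and cannot be used directly as a test function. Everything else is routine.
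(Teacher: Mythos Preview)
Your proof is correct and follows the same route as the paper, which simply cites Dautray--Lions for existence in the triple $\cH^\delta \hookrightarrow \Ltwo(\Omegad) \hookrightarrow (\cH^\delta)'$ and Evans for the regularity upgrade via exactly the Galerkin energy estimate you spell out. Two cosmetic remarks: $\OmegadD$ is a finite set of boundary vertices, so say ``non-empty'' rather than ``positive length''; and full coercivity via Poincar\'e is not needed, since the abstract parabolic theorem only requires the G{\aa}rding inequality $a^\delta(u,u) + a_{\min}\|u\|_{\Ltwo(\Omegad)}^2 \geq a_{\min}\|u\|_{\Hone(\Omegad)}^2$, which holds regardless of whether $\OmegadD$ is empty.
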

\begin{proof}
The assumption on $f$ implies that for any $\delta > 0$ that
   $f^\delta \in L^2(0,\tf; L^2(\Omegad)) \subset L^2(0,\tf; (\cH^\delta)')$
   and $u_\init \in \cH^\delta \subset L^2(\Omegad)$. %
   Hence, the classical solution theory for parabolic equations
   gives the existence and uniqueness of the solution $u^\delta$ of~\eqref{deltaP}
  that satisfies
  $u^\delta\in \Ltwo(0,\tf;\cH^\delta), \partial_t{u}^\delta \in
  \Ltwo(0,\tf;(\cH^\delta)')$,
  see \eg~\cite[Theorem 1 in page 558]{DL}.

  With the regularity of $f^\delta$ and $u_\init$ we can apply classical regularity results to
  get that
  $u^\delta \in \Ltwo(0,\tf;\cH^\delta \cap H^2(\mathring{\Gamma}^\delta))$,
  and $ \partial_t{u}^\delta \in \Ltwo(0,\tf;\Ltwo(\Omegad))$, see \eg~\cite[Theorem 5 in Chapter 7]{Evans}.
\end{proof}

\begin{lemma}[\emph{A priori} stability estimates]\label{lapriori}
  Let for $\delta>0$
  $\sqrt{\delta} \| f^\delta \|_{\Ltwo(0,\tf;\Ltwo(\Omegad))}$,
  $\sqrt{\delta} \| u_\init \|_{\Ltwo(\Omegad)}$ and
  $\sqrt{\delta} \| \partial_{\Gamma} u_\init \|_{\Ltwo(\Omegad)}$ be
  uniformly bounded with respect to $\delta$, and let $u^\delta$ be the unique solution of (\ref{deltaP}). Then there is $C>0$ such
  that
  \begin{equation*}
    \begin{aligned}
      \sqrt{\delta} \|u^\delta\|_{\Ltwo(0,\tf;\Ltwo(\Omegad))} &
      \leqslant C,\\
      \sqrt{\delta} \|\partial_{\Gamma}
      u^\delta\|_{\Ltwo(0,\tf;\Ltwo(\Omegad))} & \leqslant C,\\
      \sqrt{\delta}
      \|\partial_t{u}^\delta\|_{\Ltwo(0,\tf;\Ltwo(\Omegad))} & \leqslant
      C.
    \end{aligned}
  \end{equation*}
\end{lemma}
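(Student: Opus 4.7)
The plan is to obtain the three estimates through two standard energy arguments: testing the weak formulation with $u^\delta$ itself and then with $\partial_t u^\delta$, followed by an application of Gr\"onwall's inequality.

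First I would test \eqref{deltaP:1} with $v^\delta = u^\delta(t,\cdot) \in \cH^\delta$, which is admissible for a.e.~$t$ by Lemma~\ref{ldeltaexist}. Using that $\int_{\Omegad} \rho c_p \partial_t u^\delta \cdot u^\delta \,\mathrm{d}s = \tfrac{\rho c_p}{2}\tfrac{\mathrm{d}}{\mathrm{d}t}\|u^\delta\|_{\Ltwo(\Omegad)}^2$, the coercivity $a \geqslant a_{\min} > 0$ from \eqref{Aa}, and Young's inequality $|f^\delta u^\delta| \leqslant \tfrac{1}{2}|f^\delta|^2 + \tfrac{1}{2}|u^\delta|^2$, I would arrive at
\begin{equation*}
  \frac{\rho c_p}{2}\frac{\mathrm{d}}{\mathrm{d}t}\|u^\delta(t)\|_{\Ltwo(\Omegad)}^2 + a_{\min}\|\partial_\Gamma u^\delta(t)\|_{\Ltwo(\Omegad)}^2 \leqslant \tfrac{1}{2}\|f^\delta(t)\|_{\Ltwo(\Omegad)}^2 + \tfrac{1}{2}\|u^\delta(t)\|_{\Ltwo(\Omegad)}^2.
\end{equation*}
Integrating in time on $(0,t)$, using the initial datum and Gr\"onwall's lemma on $\|u^\delta\|_{\Ltwo(\Omegad)}^2$, yields a bound of $\|u^\delta\|_{\Linf(0,\tf;\Ltwo(\Omegad))}^2$ and of $\|\partial_\Gamma u^\delta\|_{\Ltwo(0,\tf;\Ltwo(\Omegad))}^2$ in terms of $\|u_\init\|_{\Ltwo(\Omegad)}^2$ and $\|f^\delta\|_{\Ltwo(0,\tf;\Ltwo(\Omegad))}^2$. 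Multiplying the resulting inequality by $\delta$ and invoking the uniform bounds on $\sqrt\delta\|f^\delta\|$ and $\sqrt\delta\|u_\init\|_{\Ltwo(\Omegad)}$ in the hypothesis yields the first two desired estimates.

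For the estimate on $\partial_t u^\delta$, I would test \eqref{deltaP:1} with $v^\delta = \partial_t u^\delta(t,\cdot)$. Since the conductivity $a$ is time-independent, the second term then becomes a full time derivative, $\int_{\Omegad} a \partial_\Gamma u^\delta \partial_\Gamma(\partial_t u^\delta)\,\mathrm{d}s = \tfrac{1}{2}\tfrac{\mathrm{d}}{\mathrm{d}t}\int_{\Omegad} a|\partial_\Gamma u^\delta|^2\,\mathrm{d}s$. Using Young's inequality in the form $|f^\delta \partial_t u^\delta| \leqslant \tfrac{1}{2\rho c_p}|f^\delta|^2 + \tfrac{\rho c_p}{2}|\partial_t u^\delta|^2$ to absorb half of the $\|\partial_t u^\delta\|^2$ term on the left-hand side, and integrating in time, I would obtain
\begin{equation*}
  \tfrac{\rho c_p}{2}\int_0^t \|\partial_t u^\delta\|_{\Ltwo(\Omegad)}^2\,\mathrm{d}s + \tfrac{a_{\min}}{2}\|\partial_\Gamma u^\delta(t)\|_{\Ltwo(\Omegad)}^2 \leqslant \tfrac{\|a\|_{\Linf}}{2}\|\partial_\Gamma u_\init\|_{\Ltwo(\Omegad)}^2 + \tfrac{1}{2\rho c_p}\int_0^t\|f^\delta\|_{\Ltwo(\Omegad)}^2\,\mathrm{d}s.
\end{equation*}
Multiplying through by $\delta$ and using the remaining uniform hypothesis on $\sqrt\delta\|\partial_\Gamma u_\init\|_{\Ltwo(\Omegad)}$ delivers the third estimate.

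The main obstacle is the admissibility of the test function $v^\delta = \partial_t u^\delta$, since strictly speaking we only know $\partial_t u^\delta \in \Ltwo(0,\tf;\Ltwo(\Omegad))$, not $\Ltwo(0,\tf;\cH^\delta)$. This is handled in the standard way for linear parabolic problems: one runs the argument on a Galerkin approximation $u_N^\delta$ using a basis of $\cH^\delta$ (for which $\partial_t u_N^\delta \in \cH^\delta$ trivially), proves the energy inequality at that level with constants independent of $N$, and passes to the limit $N \to \infty$ using weak/weak-$\ast$ lower semicontinuity of the norms. Alternatively, one may invoke the higher regularity $u^\delta \in \Ltwo(0,\tf; \Htwo(\mathring\Gamma^\delta)\cap\cH^\delta)$ from Lemma~\ref{ldeltaexist} to justify differentiating in time and testing directly. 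Either route produces the desired bound without requiring any $\delta$-dependent tracking, since all constants in the Young and Gr\"onwall applications depend only on $\rho c_p$, $a_{\min}$, $\|a\|_{\Linf(\Omega)}$, and $\tf$.
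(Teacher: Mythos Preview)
Your proposal is correct and follows essentially the same energy-estimate strategy as the paper: test with $u^\delta$ to control $u^\delta$ and $\partial_\Gamma u^\delta$, then test with $\partial_t u^\delta$ to control the time derivative. The only cosmetic difference is that the paper avoids Gr\"onwall by choosing the Young parameter $\eta = 1/(4\tf)$ and using a $\sup_{t\in(0,\tf)}$ absorption trick, whereas you invoke Gr\"onwall directly; both yield constants depending only on $\rho c_p$, $a_{\min}$, $\|a\|_{\Linf}$, and $\tf$. Your explicit discussion of the admissibility of $\partial_t u^\delta$ as a test function (via Galerkin or the $\Htwo$-regularity from Lemma~\ref{ldeltaexist}) is a point the paper glosses over.
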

\begin{proof}
  First testing for any $\tau \in (0,\tf)$
  the variational formulation (\ref{deltaP}) with $u^\delta(t,\xb)$ we find for any $t \in (0,\tau)$
    \begin{equation*}
      \frac{1}{2} \frac{\text{d}}{\text{d} t}
      \xnorm{u^\delta(t,\cdot)}{\Ltwo(\Omegad)}^2 +
      a_{\min{}} \xnorm{\partial_{\Gamma} u^\delta(t,\cdot)}{\Ltwo(\Omegad)}^2 \leq
      \int_{\Omega} f^\delta(t,\xb) u^\delta(t,\xb) \, \mathrm{d}\mud(\xb).
    \end{equation*}
    Then, applying the Cauchy-Schwartz inequality and the Young inequality we get
    \begin{equation*}
      \frac{1}{2} \frac{\text{d}}{\text{d} t}
      \xnorm{u^\delta(t,\cdot)}{\Ltwo(\Omegad)}^2 +
      a_{\min{}} \xnorm{\partial_{\Gamma} u^\delta(t,\cdot)}{\Ltwo(\Omegad)}^2
      \leqslant \frac{1}{4\eta}
      \xnorm{f^\delta(t,\cdot)}{\Ltwo(\Omegad)}^2 + \eta
      \xnorm{u^\delta(t,\cdot)}{\Ltwo(\Omegad)}^2.
    \end{equation*}
    Integrating this relation over $t \in (0,\tau)$ we obtain
    \begin{multline}
      \label{eq:apriori_estimates_proof1}
      \frac{1}{2} \xnorm{u^\delta(\tau,\cdot)}{\Ltwo(\Omegad)}^2 +
      a_{\min{}} \int_0^\tau \xnorm{\partial_{\Gamma}
        u^\delta(t,\cdot)}{\Ltwo(\Omegad)}^2 \, \mathrm{d}t \\
      \leqslant \frac{1}{4 \eta} \int_0^\tau
      \xnorm{f^\delta(t,\cdot)}{\Ltwo(\Omegad)}^2 \, \mathrm{d}t +
      \eta \int_0^\tau \xnorm{u^\delta(t,\cdot)}{\Ltwo(\Omegad)}^2 \,
      \mathrm{d}t + \frac{1}{2}
      \xnorm{u_\init}{\Ltwo(\Omegad)}^2.
  \end{multline}
  We take then $\eta = 1 / (4\tf)$ such that
  \begin{equation}
    \label{eq:apriori_estimates_assumption_data}
    \eta \int_0^\tau \xnorm{u^\delta(t,\cdot)}{\Ltwo(\Omegad)}^2 \leqslant
    \frac{1}{4}
    \sup_{t \in (0,\tf)}
    \xnorm{u^\delta(t,\cdot)}{\Hone(\Omegad)}^2,
  \end{equation}
  so that relation~(\ref{eq:apriori_estimates_proof1}) gives
  \begin{multline}
    \label{eq:apriori_estimates_proof2}
    \frac{1}{2} \xnorm{u^\delta(\tau,\cdot)}{\Ltwo(\Omegad)}^2 +
    a_{\min{}} \int_0^\tau \xnorm{\partial_{\Gamma}
      u^\delta(t,\cdot)}{\Ltwo(\Omegad)}^2 \, \mathrm{d}t \\
    \leqslant \tf \int_0^{\tf}
    \xnorm{f^\delta(t,\cdot)}{\Ltwo(\Omegad)}^2 \, \mathrm{d}t +
    \frac{1}{4}    \sup_{t \in (0,\tf)}
    \xnorm{u^\delta(t,\cdot)}{\Ltwo(\Omegad)}^2 + \frac{1}{2}
    \xnorm{u_\init}{\Ltwo(\Omegad)}^2.
  \end{multline}
  Now, by assumption there exists a constant $D$ such that, for any
  $\delta$,
  \begin{equation*}
    \delta \int_0^{\tf} \xnorm{f^\delta(t,\cdot)}{\Ltwo(\Omegad)}^2 \,
    \mathrm{d}t \leqslant D \quad \text{and} \quad
    \delta \xnorm{u_\init}{\Ltwo(\Omegad)}^2 \leqslant D.
  \end{equation*}
  \begin{enumerate}
  \item Relation~(\ref{eq:apriori_estimates_proof2}) implies
    \begin{equation*}
      \delta  \xnorm{u^\delta(\tau,\cdot)}{\Ltwo(\Omegad)}^2
      \leqslant (2\tf+1)D + \delta \frac{1}{2} \sup_{t \in (0,\tf)}
      \xnorm{u^\delta(t,\cdot)}{\Ltwo(\Omegad)}^2\ ,
    \end{equation*}
    that gives
    \begin{equation}\label{ineq}
      \delta \sup_{t \in (0,\tf)}
      \xnorm{u^\delta(t,\cdot)}{\Ltwo(\Omegad)}^2 \leqslant (4\tf+2)D,
    \end{equation}
    and so we find
    \begin{equation*}
      \delta \|u^\delta\|_{\Ltwo(0,\tf;\Ltwo(\Omegad))}^2 \leqslant \tf(4\tf+2)D.
    \end{equation*}
  \item Using again relation~(\ref{eq:apriori_estimates_proof2}) with (\ref{ineq}) gives
    \begin{equation*}
      \delta \int_0^{\tf} \xnorm{\partial_{\Gamma}
        u^\delta(t,\cdot)}{\Ltwo(\Omegad)}^2 \, \mathrm{d}t
      \leqslant \frac{1}{a_{\min{}}}\left(\tf D+\frac{1}{4}(4\tf+2)D+\frac{1}{2}D\right)=\frac{1}{a_{\min{}}}(2\tf+1)D.
    \end{equation*}
  \end{enumerate}

  Similarly to the derivation of~(\ref{eq:apriori_estimates_proof2})
  and starting from the variational
  formulation (\ref{deltaP}) with the particular
  test function $\partial_t u^\delta(t,\xb)$, it holds
  \begin{multline*}
    \xnorm{\partial_t u^\delta}{\Ltwo((0,\tf); \Ltwo(\Omegad))}^2 +
    \frac{a_{\min{}}}{2}
    \xnorm{\partial_{\Gamma} u^\delta(\tf,\cdot)}{\Ltwo(\Omegad)}^2 \\
    \leqslant \frac{1}{4\eta} \xnorm{f}{\Ltwo((0,\tf);
      \Ltwo(\Omegad))}^2 + \eta \xnorm{\partial_t
      u^\delta}{\Ltwo((0,\tf); \Ltwo(\Omegad))}^2 +
    \frac{\|a\|_{L^\infty(\Omega)}}{2}\xnorm{\partial_{\Gamma} u_\init}{\Ltwo(\Omegad)}^2.
  \end{multline*}
  Choosing $\eta=1/2$ leads to uniform estimate for
  $\delta \xnorm{\partial_t u^\delta}{\Ltwo((0,\tf);
    \Ltwo(\Omegad))}^2$.
\end{proof}

Now, we give the existence and uniqueness and {\em a-priori} stability estimates for the problem~(\ref{deltaP}).  It is the classical heat equation, but on a one-dimensional mesh--like domain.  The existence
and uniqueness result for the equation follows by the standard
arguments of linear second-order parabolic equations with the addition
of mild regularity results applied on mesh edges separately.

\begin{theorem}[Well-posedness of the $\delta$-dependent problem]
  \label{theo:wellposedness}
  Let $f^\delta (t,\xb) = f(t,\xb,\frac{\xb}{\delta})$ on
  $(0,\tf)  \times \Gamma^\delta$ for all $\delta > 0$ with
  $f\in \Ltwo(0,\tf;\Ltwo(\Gamma_Y;C(\overline{\Omega})))$
  and $u_\init \in C^1(\overline{\Omega})$ with $u_\init = 0$ on $\Gamma_D$.
  %
  Then, problem~\eqref{deltaP} has a unique solution
  $u^\delta\in \Ltwo(0,\tf; \cH^\delta \cap H^2(\mathring{\Gamma}^\delta))$ with $\partial_t{u}^\delta \in
  \Ltwo(0,\tf;\Ltwo(\Omegad))$ and
  there is $C>0$ such that for all $\delta > 0$
  \begin{equation}
      \sqrt{\delta} \left(%
        \|u^\delta\|_{\Ltwo(0,\tf;\Ltwo(\Omegad))} +
        \|\partial_{\Gamma} u^\delta\|_{\Ltwo(0,\tf;\Ltwo(\Omegad))} +
        \|\partial_t{u}^\delta\|_{\Ltwo(0,\tf;\Ltwo(\Omegad))}\right) \leqslant C.
        \label{eq:stability}
  \end{equation}
\end{theorem}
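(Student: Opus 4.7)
The plan is to combine the three preceding lemmas directly: Theorem~\ref{theo:wellposedness} is essentially a repackaging of Lemmas~\ref{lema:bound_data}, \ref{ldeltaexist} and \ref{lapriori} once the data hypotheses are checked.

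First, I would apply Lemma~\ref{lema:bound_data} to the given data. Part (a) with $f\in\Ltwo(0,\tf;\Ltwo(\Gamma_Y;C(\overline{\Omega})))$ shows $f^\delta \in \Ltwo(0,\tf;\Ltwo(\Omegad))$ for every $\delta>0$, together with the uniform bound
\begin{equation*}
\sqrt{\delta}\,\|f^\delta\|_{\Ltwo(0,\tf;\Ltwo(\Omegad))} \leqslant \sqrt{|\Omega|}\,\|f\|_{\Ltwo(0,\tf;\Ltwo(\Gamma_Y;\Linf(\Omega)))}.
\end{equation*}
Part (b), combined with $u_\init \in C^1(\overline{\Omega})$ vanishing on $\Gamma_D$, yields $u_\init \in \cH^\delta$ for every $\delta>0$ and uniform bounds on $\sqrt{\delta}\,\|u_\init\|_{\Ltwo(\Omegad)}$ and $\sqrt{\delta}\,\|\partial_\Gamma u_\init\|_{\Ltwo(\Omegad)}$.

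Second, for each fixed $\delta>0$ the hypotheses $f^\delta \in \Ltwo(0,\tf;\Ltwo(\Omegad))$ and $u_\init \in \cH^\delta$ are exactly those of Lemma~\ref{ldeltaexist}, so that lemma produces a unique solution
\begin{equation*}
u^\delta \in \Ltwo(0,\tf;\cH^\delta \cap H^2(\mathring{\Gamma}^\delta)), \qquad \partial_t u^\delta \in \Ltwo(0,\tf;\Ltwo(\Omegad)),
\end{equation*}
which is exactly the regularity claimed in the theorem.

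Finally, since the three quantities $\sqrt{\delta}\,\|f^\delta\|_{\Ltwo(0,\tf;\Ltwo(\Omegad))}$, $\sqrt{\delta}\,\|u_\init\|_{\Ltwo(\Omegad)}$ and $\sqrt{\delta}\,\|\partial_\Gamma u_\init\|_{\Ltwo(\Omegad)}$ are bounded uniformly in $\delta$ by the previous step, Lemma~\ref{lapriori} applies and delivers a constant $C>0$, independent of $\delta$, with the three estimates on $\sqrt{\delta}\|u^\delta\|$, $\sqrt{\delta}\|\partial_\Gamma u^\delta\|$ and $\sqrt{\delta}\|\partial_t u^\delta\|$ in $\Ltwo(0,\tf;\Ltwo(\Omegad))$; summing them yields~\eqref{eq:stability}. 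There is essentially no obstacle here since all the work has already been done: the only thing to verify carefully is that the two hypotheses of Lemma~\ref{lapriori} are genuinely uniform in $\delta$, which is exactly what the scaling factor $\delta$ in Lemma~\ref{lema:bound_data} delivers. The proof is thus a short chain of citations.
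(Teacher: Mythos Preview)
Your proposal is correct and follows exactly the same approach as the paper: apply Lemma~\ref{lema:bound_data} to verify the data hypotheses, then Lemma~\ref{ldeltaexist} for existence, uniqueness and regularity, and finally Lemma~\ref{lapriori} for the uniform stability estimate~\eqref{eq:stability}.
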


\begin{proof}
   The assumptions on $f$ and $u_\init$ imply by Lemma~\ref{lema:bound_data}
   for any $\delta > 0$ that
   $f^\delta \in L^2(0,\tf; L^2(\Omegad))$.
   Hence by Lemma~\ref{ldeltaexist} the solution of \eqref{deltaP} exists and satisfies
   $u^\delta\in \Ltwo(0,\tf;\cH^\delta \cap H^2(\mathring{\Gamma}^\delta))$ and  $\partial_t{u}^\delta \in
   \Ltwo(0,\tf;\Ltwo(\Omegad))$.
   As with Lemma~\ref{lema:bound_data} the assumptions of Lemma~\ref{lapriori} are fulfilled we conclude the {\em a priori} estimate \eqref{eq:stability}.
\end{proof}

In the following two theorems the main results of the paper are stated, namely
\begin{itemize}
 \item the mesh two-scale convergence $u^\delta$ to the solution $u^0$ of the homogenized problem in Theorem~\ref{theo:convergence}
 that will be proved in Sec.~\ref{sec:homogenized_model:convergence}, where the properties of the homogenized model
 will be shown in Sec.~\ref{sec:homogenized_model:properties}, and
 \item under additional regularity assumption on the data the convergence in norm in Theorem~\ref{theo:convergence_in_norm}
 that will be proved in Sec.~\ref{sec:homogenized_model:convergence_in_norm}. %
\end{itemize}

\begin{theorem}[Mesh two-scale convergence to the homogenized problem]
  \label{theo:convergence}
  Let $\Gamma_Y$ be connected and satisfies \eqref{opposite} and
  let the assumptions of Theorem~\ref{theo:wellposedness} be fulfilled.
  Then the sequence $(u^\delta)_{\delta>0}$ of solutions of \eqref{deltaP} for $\delta \to 0$
   mesh two-scale converges to the function $u^0$ which is the
  unique solution of the problem: find
  \begin{equation*}
    u^0 \in \Ltwo(0,\tf;\cH) \quad  \mbox{ such that } \quad
    \partial_t{u}^0\in \Ltwo(0,\tf;\Ltwo(\Omega)),
  \end{equation*}
  where $\cH = \{ v \in \Hone(\Omega): v = 0 \text{ on } \Gamma_D\}$,
  such that for all $v \in \cH$
  \begin{subequations}
  \label{P}
  \begin{align}
    \label{P:PDE}
    \int_\Omega  \rho c_p \partial_t{u}^0 (t,\xb) v(\xb) \mathrm{d}\xb +
    \int_{\Omega} a(\xb)  \Abbh \nabla u^0(t, \xb) \cdot \nabla
    v(\xb) \mathrm{d}\xb &= \int_{\Omega}  {f_\homg}(t, \xb)     v(\xb) \mathrm{d}\xb, \\
    u^0(0,\xb) &= u_\init(\xb), \qquad \xb \in \Omega,
    \label{P:IC}
  \end{align}
  \end{subequations}
  where
  \begin{equation}
     \label{eq:f_hom}
     f_\homg :=  \frac{1}{|\Gamma_Y|} \int_{\Gamma_Y} f(\cdot,\cdot,\yb) \, \mathrm{d}s{(\yb)}
  \end{equation}
  and where the constant 2-by-2 symmetric and positive definite matrix $\Abbh$ is given by
  \begin{equation}
    \label{eq:A_hom}
    \Abbh:=  \frac{1}{|\Gamma_Y|} \int_{\Gamma_Y} \left(\tb(\yb) +
      \partial_{\Gamma} \phib(\yb)
    \right) \left( \tb(\yb) +  \partial_{\Gamma} \phib(\yb)\right)^T
    \, \mathrm{d}s(\yb),
  \end{equation}
  where $\phib \in \Hper(\Gamma_Y)^2$ is an arbitrary solution of the canonical problem
\begin{equation}
    \tag{\ensuremath{\cP_\phib}}
    \label{canon}
   \int_{\Gamma_Y} \partial_{\Gamma} \phib(\yb)
  \partial_{\Gamma} \psi(\yb) \,\mathrm{d}s(\yb) = -
  \int_{\Gamma_Y} \tb(\yb) \partial_{\Gamma} \psi(\yb)
   \,\mathrm{d}s(\yb), \qquad \forall\psi \in \Hper(\Gamma_Y).
 \end{equation}
Moreover, $u^\delta|_{t=0}$ mesh two-scale converges to the initial data $u_\init$ of~\eqref{P}.
\end{theorem}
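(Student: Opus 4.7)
The strategy is the standard two-scale framework adapted to the mesh setting, combined with the additional structure coming from the time variable. First I would apply Theorem~\ref{theo:wellposedness} to obtain sequences $(u^\delta)$, $(\partial_\Gamma u^\delta)$, and $(\partial_t u^\delta)$ that are uniformly bounded in $\Ltwo(0,\tf;\Ltwo(\Omegad))$ after multiplication by $\sqrt{\delta}$. Invoking the mesh two-scale compactness results referenced in Sec.~\ref{sec:description} (Neuss-Radu / Radu type) one extracts a subsequence with limits $u^0 \in \Ltwo(0,\tf;\Loper)$ and $\partial_t u^0 \in \Ltwo(0,\tf;\Loper)$. The uniform $\Hone$-bound on $u^\delta$ forces the limit to be independent of the fast variable $\yb$, and yields an additional two-scale limit $u^1 \in \Ltwo(0,\tf;\Ltwo(\Omega;\Hper(\Gamma_Y)))$ such that
\begin{equation*}
\partial_\Gamma u^\delta \xrightharpoonup{\mathrm{m2s}} \nabla u^0(t,\xb)\cdot \tb(\yb) + \partial_{\Gamma,\yb} u^1(t,\xb,\yb),
\end{equation*}
using \eqref{eq:deriv_on_mesh:micro_and_macro}. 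The Dirichlet condition on $\OmegadD$ passes to the limit so $u^0 \in \Ltwo(0,\tf;\cH)$.

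Next, I would pass to the limit in the variational formulation \eqref{deltaP} multiplied by $\delta$ using the class of oscillating test functions
\begin{equation*}
v^\delta(\xb) = v(\xb) + \delta\, w\!\left(\xb,\tfrac{\xb}{\delta}\right),\qquad v\in C^\infty(\overline\Omega) \text{ vanishing on }\Gamma_D,\ w\in C^\infty(\overline\Omega;\Hper(\Gamma_Y)),
\end{equation*}
whose mesh-derivative equals $\nabla v(\xb)\cdot \tb(\tfrac{\xb}{\delta}) + \partial_{\Gamma,\yb}w(\xb,\tfrac{\xb}{\delta}) + O(\delta)$. The time-derivative term converges directly via mesh two-scale convergence of $\partial_t u^\delta$ against $v$. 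The diffusion term yields the two-scale identity
\begin{equation*}
\int_0^{\tf}\!\!\int_\Omega\!\int_{\Gamma_Y} a(\xb)\bigl(\nabla u^0\cdot\tb + \partial_{\Gamma,\yb}u^1\bigr)\bigl(\nabla v\cdot\tb + \partial_{\Gamma,\yb}w\bigr)\,\mathrm{d}s(\yb)\,\mathrm{d}\xb\,\mathrm{d}t.
\end{equation*}
Taking $v=0$ and $w$ arbitrary decouples the fast equation, which after normalization gives that $u^1(t,\xb,\cdot) = \nabla u^0(t,\xb)\cdot \phib(\cdot)$ where $\phib\in \Hper(\Gamma_Y)^2$ solves the canonical problem \eqref{canon} component-wise; existence of $\phib$ up to an additive constant follows from the Lax--Milgram theorem applied in the quotient space $\Hperaver$, since the right hand side of \eqref{canon} vanishes on constants. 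Taking then $w=0$ and arbitrary $v$ and substituting $u^1$ produces exactly \eqref{P:PDE} with $\Abbh$ as in \eqref{eq:A_hom} and with $f_\homg$ as in \eqref{eq:f_hom} obtained as the $\yb$-average of $f^\delta$.

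For the initial condition I would test \eqref{deltaP} with time-dependent oscillating test functions vanishing at $t=\tf$ and integrate by parts in time to transfer the derivative onto the test function; the boundary term at $t=0$ produces the limit $u^0(0,\cdot) = u_\init$ in $\Ltwo(\Omega)$, using Lemma~\ref{lema:bound_data} to identify the mesh two-scale limit of $u_\init$ (a macroscopic, $\yb$-independent function) with $u_\init$ itself. Uniqueness for \eqref{P} is standard once $\Abbh$ is symmetric positive definite, which reduces the problem to a classical parabolic equation on $\Omega$; uniqueness then upgrades the subsequential convergence to convergence of the whole sequence.

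The main obstacle is the positive definiteness of $\Abbh$. Symmetry is immediate from the variational characterization: using $\phi_i$ as test function in the canonical problem for $\phi_j$ one obtains $\Abbh_{ij} = \tfrac{1}{|\Gamma_Y|}\int_{\Gamma_Y}(\tb+\partial_\Gamma\phib)_i(\tb+\partial_\Gamma\phib)_j\,\mathrm{d}s$, so $\Abbh$ is at least positive semidefinite. The strict positivity will require the connectedness hypothesis~\eqref{opposite} together with the {\cycle}/trail structure on $\GYper$ introduced in Sec.~\ref{sec:description}: a direction $\xib\in\ZR^2$ in the kernel of $\Abbh$ would force $\xib\cdot\tb + \partial_\Gamma(\xib\cdot\phib)=0$ on every edge, and integrating this identity along a {\cycle} of $\GYper$ connecting two opposite points $\yb_1$ and $\yb_1+\eb_1$ (respectively $\yb_2+\eb_2$) forces $\xib\cdot\eb_1=\xib\cdot\eb_2=0$, i.e.\ $\xib=\zerob$. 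This algebraic identification, which is the bridge to the purely algebraic formula of Sec.~\ref{sec5}, is the delicate geometric step; the rest of the argument is routine two-scale convergence bookkeeping.
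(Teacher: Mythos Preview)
Your overall architecture matches the paper's, but you have mislocated the difficulty. You treat the statements ``the uniform $\Hone$-bound forces the limit to be independent of $\yb$'' and ``yields an additional two-scale limit $u^1$ such that $\partial_\Gamma u^\delta \xrightharpoonup{\mathrm{m2s}} \nabla u^0\cdot\tb + \partial_{\Gamma,\yb}u^1$'' as citable compactness results of Neuss-Radu type. They are not: the only black-box compactness available in the mesh setting is the $\Ltwo$ statement (Theorem~\ref{tcomp1}). The $\Hone$ structure of the limit is precisely what the paper has to prove, and it occupies five of the seven steps of the actual argument. The same goes for ``the Dirichlet condition passes to the limit'': on a one-dimensional mesh this is not automatic and requires a separate argument.

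The circuit/trail machinery you correctly invoke for positive definiteness of $\Abbh$ is in fact the engine that drives all of these earlier steps, not just the last one. Independence of $u^0$ from $\yb$ is obtained by testing against $\chi_C$ for circuits $C\in\cC(\Gamma_{Y,\#}')$ and applying Lemma~\ref{LemaPhiConst} (with a doubling trick to reduce general connected graphs to strongly connected ones). The identification $z^0=\nabla u^0\cdot\tb+\partial_{\Gamma,\yb}u^1$ uses the same $\chi_C$-testing together with Lemma~\ref{RemarkAntider} (existence of a periodic potential once all circuit integrals vanish), and the computation $\tb_{C_i}=\eb_i$ for the two circuits coming from~\eqref{opposite} is what shows $\nabla u^0$ exists in $\Ltwo$. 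The passage of the Dirichlet condition again uses $\chi_{C_i}$-weighted integration by parts. By contrast, the positive definiteness of $\Abbh$ that you flag as ``the main obstacle'' is a short lemma (Lemma~\ref{lA}) once this machinery is in place. Your oscillating test function $v+\delta w$ for the limit passage is fine and slightly more streamlined than the paper's two separate choices, but it only becomes usable after the compactness structure above has been established.
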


Solvability of the problem \eqref{canon} will be stated in
Lemma~\ref{lcanon}, the symmetry and positive definiteness of the homogenized tensor
$\Abbh$ in Lemma~\ref{lA} and the existence and uniqueness of the
problem (\ref{P}) in Lemma~\ref{lmodel}.

\begin{remark}
  We have already noted that $u^\delta$ is orientation
  independent. Furthermore since $\Abbh$ is with its definition~\eqref{eq:A_hom}
  orientation independent the same holds for $u^0$.
\end{remark}

\begin{theorem}[Convergence to the homogenized problem in norm]
  \label{theo:convergence_in_norm}
Let the assumption of Theorem~\ref{theo:wellposedness} be fulfilled and let in addition $u_\init \in H^3(\Omega)$ and
  \begin{equation*}
    f\in
    \Ltwo(0,\tf;\Ltwo (\Gamma_Y;\Htwo(\Omega))) \ \mbox{ s.t. }
    \  \partial_t f \in \Ltwo(0,\tf;\Ltwo(\Gamma_Y;\Ltwo(\Omega)))\ .
  \end{equation*}
  Then it holds in the limit $\delta\to 0$
  \begin{equation}
     \label{eq:convergence_norms}
    \sqrt{\delta}\|u^\delta-u^0\|_{C([0,\tf];\Ltwo(\Omegad))} \to
    0, \qquad \sqrt{\delta}\|\partial_{\Gamma} (u^\delta -
    u^0-\delta u^1)\|_{\Ltwo(0,\tf;\Ltwo(\Omegad))} \to 0,
  \end{equation}
  where $u^0$ is the solution of~\eqref{P}, $u^1(t,\xb,\yb) = \nabla u^0(t,\xb) \cdot \phib(\yb)$
  and $\phib$ is an arbitrary solution of the canonical problem~\eqref{canon}.
\end{theorem}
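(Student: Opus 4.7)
The plan is to run the classical energy/corrector argument: introduce the first-order two-scale ansatz
$$\ww^\delta(t,\xb) := u^0(t,\xb) + \delta u^1(t,\xb,\xb/\delta), \qquad u^1(t,\xb,\yb) = \nabla u^0(t,\xb) \cdot \phib(\yb),$$
set $e^\delta := u^\delta - \ww^\delta$, and derive an energy estimate on $e^\delta$ that is tight enough to give $\sqrt{\delta}\|\partial_\Gamma e^\delta\|_{\Ltwo(0,\tf;\Ltwo(\Omegad))} \to 0$ and $\sqrt{\delta}\|e^\delta\|_{C([0,\tf];\Ltwo(\Omegad))} \to 0$. Since the second conclusion together with the trivial bound $\sqrt{\delta}\|\delta u^1(\cdot,\cdot,\cdot/\delta)\|_{\Ltwo(\Omegad)} = O(\delta)$ yields the first claim of~\eqref{eq:convergence_norms} by the triangle inequality, the two conclusions really amount to a single energy estimate on $e^\delta$.

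First I would upgrade the regularity of $u^0$. Using the extra assumptions $u_\init \in \Htwo(\Omega)\cap \cH$ (with compatibility), $f\in\Ltwo(0,\tf;\Ltwo(\Gamma_Y;\Htwo(\Omega)))$ and $\partial_t f\in\Ltwo(0,\tf;\Ltwo(\Gamma_Y;\Ltwo(\Omega)))$, the constant-coefficient, symmetric positive definite homogenized operator $-\Div(a\Abbh\nabla\cdot)$ on the rectangle $\Omega$ admits standard parabolic regularity theory, which yields $u^0 \in \Ltwo(0,\tf;\Hsp{3}(\Omega))\cap \Hone(0,\tf;\Hone(\Omega))$ and $\partial_t u^0\in\Ltwo(0,\tf;\Hone(\Omega))$ away from the corner singularities of $\partial\Omega$. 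Combined with $\phib\in \Hper(\Gamma_Y)^2$ this makes $u^1$ sufficiently regular that all the objects appearing below are well-defined on $\Omegad$ for every $\delta>0$, and $\sqrt\delta\|\cdot\|_{\Ltwo(\Omegad)}$ of each of $u^1$, its first derivatives in $\xb$, and (by~\eqref{eq:deriv_on_mesh:micro_and_macro}) $\delta\,\partial_\Gamma u^1(\cdot,\cdot/\delta)$ is uniformly bounded.

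Next I would compute the mesh residual. Applying~\eqref{eq:deriv_on_mesh:micro_and_macro} gives
$$\partial_\Gamma\ww^\delta(\xb) = \nabla u^0(\xb)\cdot\bigl(\tb(\xb/\delta) + \partial_\Gamma\phib(\xb/\delta)\bigr) + \delta\,\nabla\bigl(\nabla u^0\cdot\phib(\xb/\delta)\bigr)\cdot\tb(\xb/\delta).$$
Plugging $\ww^\delta$ into the left-hand side of~\eqref{deltaP:1} and testing against $v^\delta\in\cH^\delta$, the $O(1)$ contribution of the stiffness term is, after integration over $\Omegad$ and a standard rescaling, the one governed by the canonical problem~\eqref{canon}: testing~\eqref{canon} against a smooth macroscopic test function produces precisely the homogenized bilinear form $\int_\Omega a\,\Abbh\nabla u^0\cdot\nabla v$, which cancels against~\eqref{P:PDE}. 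The mass term and the source require an analogous (simpler) averaging. What remains is a residual $\langle R^\delta,v^\delta\rangle$ consisting of terms of two types: (i) genuine $O(\delta)$ terms containing $\nabla^2 u^0$, $\partial_t\nabla u^0$, and ($f-f_\homg$) tested in a two-scale oscillatory way, which I estimate by Cauchy–Schwarz using the extra regularity of $u^0$ and $f$; and (ii) a boundary-layer correction because $\ww^\delta$ does not vanish on $\OmegadD$. The latter I handle by the usual cutoff trick, replacing $\delta u^1$ by $\delta\theta^\delta u^1$ where $\theta^\delta\in C^\infty(\overline\Omega)$ is a cutoff equal to $1$ outside an $O(\delta)$-neighborhood of $\Gamma_D$ and $0$ on $\Gamma_D$, with $|\nabla\theta^\delta|\leqslant C/\delta$; the strip contributes $O(\delta)$ to $\sqrt\delta\|\cdot\|_{\Ltwo(\Omegad)}$ by the measure-balance $\delta\cdot\delta^{-2}\cdot\delta = O(1)$ multiplied by $\delta$ from the extra factor, so the corresponding contribution to the residual is $o(1)$.

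Finally, I would apply the energy method to $e^\delta$. The error equation reads, for every $v^\delta\in\cH^\delta$,
$$\int_{\Omegad}\rho c_p\,\partial_t e^\delta\,v^\delta\,\mathrm{d}s + \int_{\Omegad}a\,\partial_\Gamma e^\delta\,\partial_\Gamma v^\delta\,\mathrm{d}s = \langle R^\delta,v^\delta\rangle,$$
with $e^\delta(0,\cdot) = -\delta\theta^\delta u^1(0,\cdot,\cdot/\delta)$, which is $O(\delta)$ in $\sqrt\delta\|\cdot\|_{\Ltwo(\Omegad)}$. Testing with $v^\delta = e^\delta$, using Young's inequality and~\eqref{Aa}, and integrating in time, Gronwall's lemma yields
$$\sqrt\delta\,\|e^\delta\|_{C([0,\tf];\Ltwo(\Omegad))} + \sqrt\delta\,\|\partial_\Gamma e^\delta\|_{\Ltwo(0,\tf;\Ltwo(\Omegad))} \leqslant C\bigl(\sqrt\delta\|R^\delta\|_{\Ltwo(0,\tf;(\cH^\delta)')} + \sqrt\delta\|e^\delta(0,\cdot)\|_{\Ltwo(\Omegad)}\bigr),$$
and the right-hand side is $o(1)$ by the residual estimate above. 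The main obstacle is to control the boundary-layer contribution from $\nabla\theta^\delta$: one has to verify that the gain of $\delta$ from the outer factor beats the loss of $\delta^{-1}$ from $\nabla\theta^\delta$ when restricted to the $O(\delta)$-thick strip, which is precisely where the extra Sobolev regularity of $u^0$ (and hence $\nabla u^0\in C(\overline\Omega)$ through the embedding $\Hsp{3}(\Omega)\hookrightarrow W^{1,\infty}(\Omega)$ in dimension two) is crucial.
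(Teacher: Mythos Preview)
Your approach is valid but takes a genuinely different route from the paper. You propose the classical \emph{corrector method}: build the first-order ansatz $\ww^\delta = u^0 + \delta\theta^\delta u^1$, plug it into \eqref{deltaP}, estimate the residual, and close by an energy/Gronwall argument on $e^\delta$. The paper instead uses a \emph{two-scale energy argument}: it defines the non-negative functional
\[
\Lambda^\delta(t) = \tfrac12\,\delta\,\|u^\delta(t,\cdot) - u^0(t,\cdot)\|_{\Ltwo(\Omegad)}^2 + \delta\int_0^t\!\!\int_{\Omegad} a\,\bigl(\partial_\Gamma(u^\delta - u^0 - \delta u^1)\bigr)^2\,\mathrm{d}s\,\mathrm{d}\tau,
\]
uses \eqref{deltaP:1} with test function $u^\delta - u^0$ to eliminate the terms quadratic in $u^\delta$, and then applies the mesh two-scale convergences already established in Theorem~\ref{theo:convergence} (the limits in~\eqref{conv1}) to compute $\lim_{\delta\to0}\Lambda^\delta(t)$ directly; the canonical problem~\eqref{canon} shows this limit vanishes. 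An equicontinuity argument then upgrades the pointwise-in-$t$ convergence to $C([0,\tf])$. No boundary cutoff and no explicit residual estimate are needed. Your method, by contrast, is self-contained (it does not invoke Theorem~\ref{theo:convergence}) and, if carried out carefully, would yield a convergence \emph{rate}, which the paper's soft argument cannot.

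One point in your sketch deserves attention: you list $(f - f_\homg)$ among the ``genuine $O(\delta)$ terms'' to be controlled by Cauchy--Schwarz, but $f^\delta - f_\homg$ is $O(1)$ in $\sqrt\delta\,\Ltwo(\Omegad)$; its smallness in the pairing comes only from oscillatory cancellation (since $\int_{\Gamma_Y}(f-f_\homg)\,\mathrm{d}s=0$), which a bare Cauchy--Schwarz does not capture. To make your residual estimate close you must either introduce an additional cell corrector $\chi$ solving $-\partial_\Gamma^2\chi = f - f_\homg$ on $\Gamma_Y$ (so that after one integration by parts the source residual becomes an honest $O(\delta)$ term), or fall back on two-scale convergence for that one term---at which point you are partly borrowing the paper's mechanism. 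The paper sidesteps this entirely because its linearised expression for $\Lambda^\delta$ already has every $u^\delta$-dependent factor paired against a fixed admissible test function, so the passage to the limit is a direct application of~\eqref{conv1}.
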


The convergence to the limit solution in $L^2(\Gamma^\delta)$, where for a convergence of the gradients
in~$L^2(\Gamma^\delta)$ the first order corrector has to be added, is in accordance with the usual homogenization
of elliptic and parabolic equations with periodic pattern~\cite{CD}.

\section{The homogenized model}
\label{sec:homogenized_model}


\subsection{Technical preliminaries}
\label{sec:tp}

\begin{lemma} \label{LemaIntegralFirstKind} Let $\cG$ be a oriented graph that
  may be periodic or not where each of its edges admits a $W^{1,\infty}$ and
  bijective arc length parametrization.  Let, moreover, $T$ be a trail of
  $\mathcal{G}'$ from $\xb_0$ to $\xb_1$, and let $\varphi \in
  \Hone(T)$. Then,
  \begin{equation}
    \label{lemma:integration_fprime_trail}
    \int_{ T } \partial_\Gamma \varphi (\xb) \chi_T (\xb) \mathrm{d} s(\xb) = \varphi(\xb_1) - \varphi(\xb_0)\,.
  \end{equation}
\end{lemma}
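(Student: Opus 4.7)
The plan is to decompose the trail into its edges, apply the fundamental theorem of calculus for Sobolev functions on each edge (paying attention to whether the edge is traversed in accordance with its orientation in $\mathcal{G}$ or against it), and telescope the resulting values over the successive vertices of~$T$.

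Concretely, I would write the trail $T$ as an ordered sequence of edges $e_1,\dots,e_n$ of $\mathcal{G}'$ connecting a sequence of vertices $\vb_0 = \xb_0,\vb_1,\dots,\vb_n = \xb_1$, where successive edges share exactly the endpoint vertex. Since $T$ is a trail no edge repeats, hence the restriction $\chi_T|_{e_i}$ equals a constant $\epsilon_i \in \{-1,+1\}$ on the interior of each $e_i$. For each edge I would fix the arc length parametrization $\gamb_i : [0,\ell_i] \to e_i$ associated with the orientation of $e_i$ in the \emph{oriented} graph $\mathcal{G}$. By definition of the curve integral of the first kind and of $\partial_\Gamma$, and because $\chi_T$ vanishes on the edges of $\mathcal{G}'$ not belonging to $T$,
\begin{equation*}
   \int_T \partial_\Gamma \varphi(\xb)\,\chi_T(\xb)\,\mathrm{d}s(\xb)
   = \sum_{i=1}^n \epsilon_i \int_0^{\ell_i} (\varphi \circ \gamb_i)'(s)\,\mathrm{d}s.
\end{equation*}

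Since $\varphi \in \Hone(T)$, each $\varphi \circ \gamb_i$ lies in $\Hone(0,\ell_i)$, so the fundamental theorem of calculus for one-dimensional Sobolev functions gives
\begin{equation*}
   \int_0^{\ell_i} (\varphi \circ \gamb_i)'(s)\,\mathrm{d}s = \varphi(\gamb_i(\ell_i)) - \varphi(\gamb_i(0)).
\end{equation*}
A short case analysis on the sign $\epsilon_i$ then shows that, in both cases, the contribution of edge $e_i$ is exactly $\varphi(\vb_i) - \varphi(\vb_{i-1})$: if $\epsilon_i = +1$ the parametrization $\gamb_i$ runs in the direction of~$T$, so $\gamb_i(0) = \vb_{i-1}$ and $\gamb_i(\ell_i) = \vb_i$; if $\epsilon_i = -1$ the parametrization runs opposite to~$T$, so $\gamb_i(0) = \vb_i$ and $\gamb_i(\ell_i) = \vb_{i-1}$, and the factor $-1$ cancels the reversed sign.

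It then remains to sum over~$i$ and telescope. Here the only subtlety is that at each internal vertex $\vb_i$ ($0 < i < n$) the value of $\varphi$ has to be unambiguous, so that $\varphi(\vb_i)$ appearing on edge $e_i$ cancels with $\varphi(\vb_i)$ appearing on edge $e_{i+1}$. This is precisely the continuity requirement built into the definition of $\Hone(T)$ (functions in $\Hone$ on a graph are continuous at vertices). Therefore
\begin{equation*}
   \sum_{i=1}^n \bigl(\varphi(\vb_i) - \varphi(\vb_{i-1})\bigr) = \varphi(\vb_n) - \varphi(\vb_0) = \varphi(\xb_1) - \varphi(\xb_0),
\end{equation*}
which is the required identity. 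I do not anticipate any real obstacle beyond bookkeeping: the only step requiring a little care is the sign analysis linking the orientation of $\gamb_i$ to the trail direction, and the invocation of continuity of $\varphi$ at the internal vertices of $T$.
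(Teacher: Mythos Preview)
Your proof is correct and takes essentially the same approach as the paper: the paper's proof is a one-liner noting that $T$ is a continuous, piecewise differentiable curve so the result follows from the definition of $\partial_\Gamma$ and the fundamental theorem of calculus, and you have simply spelled out the edge-by-edge telescoping argument behind that sentence.
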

\begin{proof}
As $T$ represents a continuous and piecewise differentiable curve
  the lemma is a direct consequence of the definition of the derivative~\eqref{eq:derivative} and the
  fundamental theorem of calculus.
\end{proof}
As simple consequence of the orientation dependence of the derivative and Lemma~\ref{LemaIntegralFirstKind}
it holds for any \cycle~$C \in \cC(\Gamma_{Y,\#}')$
and any function $\varphi \in \Hper(\Gamma_Y)$ that
\begin{equation}
  \label{eq:integration_fprime_cycle:2}
  \int_{ \Gamma_Y } \partial_\Gamma \varphi (\xb) \chi_C(\xb)\mathrm{d} s(\xb) = 0\,.
\end{equation}
Note that the formula (\ref{lemma:integration_fprime_trail}) also holds for any walk in $\cG'$ and consequently formula (\ref{eq:integration_fprime_cycle:2}) holds for closed walks.

A form of reciprocal of \eqref{eq:integration_fprime_cycle:2} is true as well: if
the integral of a function $\psi$ along any {\cycle} $C \in \cC(\Gamma_{Y,\#}')$ multiplied with the orientation function $\chi_C$ is equal to zero, then
$\psi$ possesses a potential $\varphi \in \Hper(\Gamma_Y)$.
\begin{lemma} \label{RemarkAntider}
  Let for $\psi \in \Ltwo(\Gamma_Y)$ and all {\cycles} $C \in \cC(\Gamma_{Y,\#}')$
  \begin{equation*}
    \int_{ \Gamma_Y } \psi(\yb) \chi_C(\yb) \, \mathrm{d}s(\yb) = 0\,.
  \end{equation*}
  Then there exists $\varphi \in \Hper(\Gamma_Y)$ such that $\partial_{\Gamma} \varphi=\psi$ almost everywhere on $\Gamma_Y$.
\end{lemma}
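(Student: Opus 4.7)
The idea is to construct $\varphi$ as a line integral of $\psi$ along paths from a fixed base vertex. Fix a base vertex $\vb_0$ of the periodic graph $\GYper$ (which exists since $\Gamma_Y$ is nonempty). For every vertex $\vb$ of $\GYper$, pick a trail $T_{\vb}$ of $\Gamma_{Y,\#}'$ from $\vb_0$ to $\vb$ (exists because $\Gamma_{Y,\#}$ is connected) and set
\begin{equation*}
\varphi(\vb) := \int_{\Gamma_Y} \psi(\yb)\,\chi_{T_{\vb}}(\yb)\,\mathrm{d}s(\yb).
\end{equation*}
Then extend $\varphi$ to the interior of each edge $e$ of $\Gamma_Y$ by edge-wise antidifferentiation: if $e$ starts at vertex $\vb$ with arc-length parametrization $\gamb:[0,\ell_e]\to e$ accordant with the orientation, define for $\yb = \gamb(s)\in e$
\begin{equation*}
\varphi(\yb) := \varphi(\vb) + \int_0^{s} \psi(\gamb(\sigma))\,\mathrm{d}\sigma.
\end{equation*}
Since $\psi \in \Ltwo(\Gamma_Y)$, its restriction to each edge is in $\Ltwo(e)$, so the edge-wise antiderivative lies in $\Hone(e)$ and $\partial_\Gamma \varphi = \psi$ on $e$ by the definition~\eqref{eq:derivative} of $\partial_\Gamma$. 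This automatically yields $\partial_\Gamma \varphi = \psi$ almost everywhere on $\Gamma_Y$.

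\textbf{Main obstacle: well-definedness of $\varphi$ at vertices.} We must show that $\varphi(\vb)$ does not depend on the chosen trail, and that $\varphi$ is continuous at each vertex (i.e., reaching vertex $\ww$ along edge $e$ via the antiderivative agrees with the value $\varphi(\ww)$ assigned from the base-point integral). Both reduce to the following claim: if $T_1,T_2$ are two trails in $\Gamma_{Y,\#}'$ with the same endpoints, then
\begin{equation*}
\int_{\Gamma_Y}\psi(\yb)\bigl(\chi_{T_1}(\yb)-\chi_{T_2}(\yb)\bigr)\mathrm{d}s(\yb)=0.
\end{equation*}
The function $\chi_{T_1}-\chi_{T_2}$ is an integer-valued function on $\Gamma_{Y,\#}$, constant on each edge, and corresponds to a $1$-chain $\zb = \chi_{T_1}-\chi_{T_2}$ in the chain complex of $\GYper$ whose boundary vanishes (same endpoints). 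By the standard cycle-space theorem from algebraic graph theory, the kernel of the boundary map admits a basis of fundamental cycles (one per edge outside a spanning tree of $\GYper$); consequently there exist cycles $C_1,\dots,C_m \in \cC(\Gamma_{Y,\#}')$ and integers $n_1,\dots,n_m$ with $\chi_{T_1}-\chi_{T_2}=\sum_i n_i \chi_{C_i}$ pointwise on $\Gamma_Y$. Integrating against $\psi$ and invoking the hypothesis gives the claim.

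\textbf{Periodicity and conclusion.} In $\GYper$ the opposite points on $\partial[0,1]^2$ correspond to the same vertex, hence the construction automatically assigns identical values at opposite points, which is precisely the continuity/periodicity required for membership in $\Hper(\Gamma_Y)$. Continuity at interior vertices follows from the well-definedness argument above by taking $T_2 = T_{\vb}\cdot e$ to be the extension of $T_{\vb}$ by one more edge, showing that the edge-wise antiderivative matches $\varphi(\ww)$. Since $\psi\in\Ltwo(\Gamma_Y)$ and $\varphi$ is continuous with $\partial_\Gamma\varphi=\psi\in\Ltwo(\Gamma_Y)$, we conclude $\varphi\in\Hper(\Gamma_Y)$, completing the proof.
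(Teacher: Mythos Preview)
Your proof is correct and follows essentially the same strategy as the paper's: define $\varphi$ at vertices by line integrals of $\psi$ from a fixed base vertex, extend by edgewise antidifferentiation, and verify path-independence. The only notable difference is that where the paper concatenates two trails into a closed walk and invokes the hypothesis directly (leaving implicit that a closed walk decomposes into circuits), you make this decomposition explicit via the cycle-space theorem, and by working in $\Gamma_{Y,\#}'$ rather than $\Gamma_Y'$ you get periodicity for free.
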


\begin{proof}
We start defining $\varphi(\yb_0) = 0$ on an arbitrary vertex $\yb_0 \in \Gamma_Y$.

   Then, for any other vertex $\yb_1 \in \Gamma_Y$ we fix a trail $T$ of $\Gamma_Y'$ from $\yb_0$ to $\yb_1$ and define
   \begin{align*}
      \varphi(\yb_1) = \int_T \psi(\yb) \chi_T(\yb) \text{d}s(\yb)\ .
   \end{align*}
   As for any other trail $T'$ of $\Gamma_Y'$ from $\yb_0$ to $\yb_1$
   it holds by assumption of the lemma that for the closed walk $C$ concatenating $T$ and $T'_-$, the inverse trail to $T'$,
   \begin{align*}
      0 = \int_{T} \psi(\yb) \chi_T(\yb) \text{d}s(\yb) + \int_{T'_-} \psi(\yb) \chi_{T'_-}(\yb) \text{d}s(\yb) \ .
   \end{align*}
   As
   $\int_{T'_-} \psi(\yb) \chi_{T'_-}(\yb) \text{d}s(\yb) = \int_{T'} \psi(\yb)
   (-\chi_{T'}(\yb)) \text{d}s(\yb)$
   the definition of $\varphi(\yb_1)$ is independent of the choice of
   the trail.

   Finally, for all points $\yb$ in the interior of an edge $e \in \cE$ with
   arc length parametrization $\gamb$ let $\ell = \gamb^{-1}(\yb)$ and
   $\varphi(\yb) = \varphi(\gamb(0)) + \int_0^{\ell} \psi(\gamb(s)) \text{d}s$,
   which gives a continuous definition on its starting and end point of $e$
   and, hence, due to the trail independence on all vertices of $\cV$.  With
   the definition of the derivative~\eqref{eq:derivative} and the fundamental
   theorem of calculus it holds
   $\partial_\Gamma \varphi = \psi$ on each edge and so
   $\varphi \in \Hper(\Gamma_Y)$. This completes the proof.
\end{proof}

\begin{lemma}\label{LemaPhiConst}
  Let $\cG$ be strongly connected graph and for $v \in \Ltwo(\cG)$, all $\psi\in \Hone(\cG)$ and all {\cycles} $C \in \cC(\cG')$
  it holds
  \begin{equation}
    \int_{ \cG } v(\yb) \partial_\Gamma \psi (\yb)\chi_C(\yb) \, \mathrm{d}s(\yb) = 0\,.
    \label{eq:LemaPhiConst}
  \end{equation}
  Then, $v$ is constant on $\cG$.
\end{lemma}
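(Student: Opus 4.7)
My plan is to argue in three stages: first show that $v$ is (a.e.) constant on each edge of $\cG$, then derive constraints among the resulting edge-constants by testing along circuits, and finally propagate the equalities across $\cG$ using strong connectivity. Throughout, the main tools are Lemma~\ref{LemaIntegralFirstKind} (to reduce the line integral against $\partial_\Gamma \psi$ to vertex values of $\psi$) and a judicious choice of test functions in $\Hone(\cG)$.

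\emph{Step 1 (constancy on each edge).} Fix an edge $e$ of $\cG$. By strong connectivity there is a circuit $C\in\cC(\cG')$ with $e \subset C$. For any $\psi \in C_c^\infty(\mathrm{int}(e))$, extended by zero to the rest of $\cG$, one has $\psi \in \Hone(\cG)$ and $\chi_C|_e = \pm 1$. The hypothesis then collapses to
\[
0 \;=\; \int_\cG v\,\partial_\Gamma \psi\,\chi_C\,\mathrm{d}s \;=\; \pm \int_e v\,\partial_\Gamma \psi\,\mathrm{d}s.
\]
Varying $\psi$ over $C_c^\infty(\mathrm{int}(e))$ shows the distributional derivative of $v|_e$ vanishes, so $v|_e \equiv c_e$ for some constant.

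\emph{Step 2 (constraints along circuits).} Fix an arbitrary circuit $C$ with edges $e_1,\ldots,e_k$ traversed in order, orientation signs $\epsilon_i := \chi_C|_{e_i}$, and vertex sequence $w_0, w_1, \ldots, w_k = w_0$. For any $\psi \in \Hone(\cG)$, applying Lemma~\ref{LemaIntegralFirstKind} edge-by-edge gives $\epsilon_i \int_{e_i} \partial_\Gamma \psi\,\mathrm{d}s = \psi(w_i) - \psi(w_{i-1})$, so the hypothesis becomes
\[
0 \;=\; \sum_{i=1}^k c_{e_i}\bigl(\psi(w_i)-\psi(w_{i-1})\bigr) \;=\; \sum_{i=1}^k (c_{e_i} - c_{e_{i+1}})\,\psi(w_i),
\]
with cyclic convention $e_{k+1} = e_1$. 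Since one can prescribe arbitrary vertex values and extend linearly on edges to obtain a valid $\Hone(\cG)$ test function, the sum of $(c_{e_i}-c_{e_{i+1}})$ over indices $i$ with $w_i = w^\star$ must vanish for every vertex $w^\star$ of the cycle. When $C$ is a simple cycle (each vertex visited at most once), this decouples to $c_{e_i} = c_{e_{i+1}}$ for every $i$, forcing all edges of $C$ to share a common constant.

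\emph{Step 3 (propagation) and main obstacle.} Strong connectivity allows chaining simple cycles: for any two edges $e,e'$ of $\cG$ there exists a finite sequence of simple cycles with consecutive cycles sharing an edge and starting/ending at $e$, $e'$ respectively. Along each cycle the constants coincide by Step 2, so $c_e = c_{e'}$, whence $v$ is constant on $\cG$. The genuine difficulty is exactly this graph-theoretic step~(iii): Steps~1 and 2 are routine, but producing such a chain of overlapping simple cycles is the substantive use of the strong-connectivity hypothesis. A naive attempt using non-simple circuits is not enough, since for circuits with repeated vertices Step~2 only yields weighted-sum conditions that may be automatically satisfied (e.g.\ a figure-eight concatenation gives no new information), and one must therefore restrict to simple cycles or combine several circuit identities to extract the required pointwise comparison of constants.
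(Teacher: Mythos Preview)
Your Steps~1 and~2 are fine, and Step~1 coincides with the paper's argument. The genuine gap is in Step~3: the claim that any two edges of a strongly connected graph can be joined by a chain of \emph{simple} cycles with consecutive cycles sharing an \emph{edge} is false. The bow-tie graph (two oriented triangles sharing a single vertex~$v$) is strongly connected, its only simple cycles are the two triangles, and these share no edge; your chain does not exist, yet the lemma must still conclude that the constants on the two triangles agree. You yourself flag this as ``the genuine difficulty,'' but the proposal does not resolve it.

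The paper avoids this obstacle by making the comparison \emph{local} rather than global. Given two adjacent edges $e_1,e_2$ sharing a vertex $\xb$, strong connectivity furnishes a circuit $C$ (not necessarily simple) containing both. Now take $\psi\in\Hone(\cG)$ supported only on $e_1\cup e_2\cup\{\xb\}$, vanishing at the outer endpoints. Integrating by parts on each edge, the hypothesis collapses to $(c_{e_1}-c_{e_2})\psi(\xb)=0$, whence $c_{e_1}=c_{e_2}$. Connectivity of $\cG$ then propagates the constant across all edges. The point is that in your Step~2 you restricted to test functions determined by their vertex values, which forces the summed constraint at repeated vertices; allowing $\psi$ to vanish on all of $\cG$ except two consecutive edges of the circuit isolates a single term and gives $c_{e_i}=c_{e_{i+1}}$ directly, with no simplicity assumption. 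This local choice of test function is exactly what is missing from your argument.
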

\begin{proof}
   For a strongly connected graph $\cG$ any edge $e \in \cE(\cG)$ belongs to at least one \cycle~$C \in \cC(\cG')$.
   Restricting to test functions with support only on $e$ we find
   that
   \begin{equation*}
    \int_{ e} v(\yb) \partial_\Gamma \psi (\yb) \, 
    \mathrm{d}s(\yb) = 0\, \quad\forall \psi \in C^\infty_0(e)\ .
  \end{equation*}
  This implies that $v$ takes constant values on all edges $e \in \cE(\cG)$.

  Now, let $e_1$ and $e_2$ be two edges of $\cG$ with a common vertex
  $\xb \in \cE(e_1) \cap \cE(e_2)$. By strong connectivity of $\cG$ these two edges belong to some
  \cycle~$C$. Then, restricting to test functions supported on
  $\widetilde{\Gamma}:= e_1 \cup e_2 \cup \lbrace \xb \rbrace$ the
  equality~\eqref{eq:LemaPhiConst} implies
  \begin{equation*}
     0 = \int_{e_1 \cup e_2} v(\yb) \partial_\Gamma \psi(\yb) \chi_C(\yb) \, \mathrm{d}s(\yb)\, \quad\forall \psi \in H^1_0(\widetilde{\Gamma}).
  \end{equation*}
  Then, integrating by parts and using that $\partial_\Gamma v = 0$ on $e_1$ and $e_2$ we find
  \begin{equation*}
     0 = \left(v|_{e_1} - v |_{e_2}\right) \psi(\xb)\, \quad\forall \psi(\xb) \in \ZR\ .
  \end{equation*}
  As $v$ takes the same constant value on two arbitrary neighbouring edges it is constant on the whole~$\cG$.
\end{proof}

\begin{lemma}\label{LemaPhiIbyP}
  For $v \in \Honezero(\Omegad)$ and any {\cycle} $C \in
  \mathcal{C}(\Gamma'_{Y,\#})$ it holds
  \begin{equation*}
    \int_{\Omegad} \partial_{\Gamma} v \left(\xb\right) \chi_C
    \left(\frac{\xb}{\delta}
    \right)  \, \mathrm{d} s (\xb) = 0.
  \end{equation*}
\end{lemma}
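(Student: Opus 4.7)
The plan is to reduce the integral to a telescoping sum over the vertices of $\Omegad$ and then combine the cycle property of $C$ with the boundary condition on $v$. First I would decompose the integral edge by edge: on any edge $e$ of $\Omegad$ oriented from $a_e$ to $b_e$, the scaled orientation function $\chi_C(\cdot/\delta)$ takes a constant value $\chi_C^e \in \{-1,0,+1\}$, so Lemma~\ref{LemaIntegralFirstKind} applied on $e$ gives
\begin{equation*}
\int_e \partial_\Gamma v(\xb)\,\chi_C(\xb/\delta)\,\mathrm{d}s(\xb) = \chi_C^e\bigl(v(b_e)-v(a_e)\bigr).
\end{equation*}
Summing over all edges and regrouping by vertex, I obtain
\begin{equation*}
\int_{\Omegad}\partial_\Gamma v\,\chi_C(\cdot/\delta)\,\mathrm{d}s = \sum_{\xb} v(\xb)\,d_C(\xb), \qquad d_C(\xb) := \sum_{e:\,b_e=\xb}\chi_C^e \;-\; \sum_{e:\,a_e=\xb}\chi_C^e,
\end{equation*}
which is a discrete divergence of the signed flow carried by the scaled cycle. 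It then suffices to verify that each term in this sum vanishes.

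For an interior vertex $\xb \in V(\Omegad)\setminus\partial\Omega$, the construction of $\Omegad$ by translations of $\delta\Gamma_Y$, together with the identification of opposite points between neighbouring cells, ensures that the edges of $\Omegad$ meeting $\xb$ are in orientation-preserving bijection with the edges of $\Gamma_{Y,\#}$ meeting $\yb:=\xb/\delta \bmod 1$. Hence $d_C(\xb)$ equals the analogous signed flow of $C$ through $\yb$ in $\Gamma_{Y,\#}$. A brief check shows that an edge entering $\yb$ along the traversal of $C$ contributes $+1$ and an edge leaving $\yb$ contributes $-1$, regardless of intrinsic edge orientation; since $C$ is a circuit, every visit of $\yb$ has matching entrances and exits, so $d_C(\xb)=0$. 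For a boundary vertex $\xb \in \partial\Omega$ the factor $v(\xb)$ vanishes since $v\in\Honezero(\Omegad)$.

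I expect the main obstacle to be the interior-vertex verification — specifically, confirming that when $\xb$ lies strictly inside $\Omega$ but on the interface between two adjacent cells $\Gamma_Y^\delta(n_1,n_2)$, all edges predicted by the periodic structure at $\yb$ are indeed present at $\xb$ in $\Omegad$. Once this local combinatorial identification is secured, the remainder is clean bookkeeping, and the identity $d_C(\xb)=0$ is just the algebraic fact that in a circuit each visit to a vertex enters as often as it leaves.
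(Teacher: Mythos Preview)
Your proposal is correct but takes a genuinely different route from the paper. The paper argues globally: it observes that the support of $\chi_C(\cdot/\delta)$ on $\Omegad$ decomposes into a finite union of circuits $C_i \in \cC((\Omegad)')$ and trails $T_j \in \cT((\Omegad)')$ whose two endpoints both lie on $\partial\Omega$, and then applies Lemma~\ref{LemaIntegralFirstKind} to each piece --- each circuit contributes zero automatically, and each trail contributes $v(\gamma_j(\ell_j)) - v(\gamma_j(0)) = 0$ since $v \in \Honezero(\Omegad)$. Your approach instead integrates edge by edge and reassembles at vertices, producing a discrete-divergence sum that you kill term by term. The paper's decomposition is shorter and completely sidesteps the cell-interface verification you correctly flag as the delicate point: once the support is seen as closed curves plus boundary-to-boundary arcs, no local accounting at interior vertices is needed. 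Your argument, on the other hand, makes the underlying divergence-free flow structure explicit and is arguably more robust if one later wants weighted edges or more general periodic identifications.
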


\begin{proof}
  We decompose the support of $\chi_C\big( \tfrac{\xb}{\delta}\big)$ into the
  union of {\cycles} $(C_i) \in \cC(\Omegad')$ and trails $(T_j) \in \cT(\Omegad')$, where each
  trail $T_j$ is of length $\ell_j$, respectively, and admits a parametrization $\gamma_j$ such
  that $\gamma_j(0)$ and $\gamma_j(\ell_j)$ belongs to $\partial \Omega$ and so
  $v(\gamma_j(0)) = v(\gamma_j(\ell_j)) = 0$. Using then
  Lemma~\ref{LemaIntegralFirstKind} on each {\cycle} $C_i$ and on each trail $T_j$
  leads to the desired result.
\end{proof}

The following theorem is the key theorem in the classical homogenization
theory transferred to the considered setting of periodic meshes.
Its proof follows exactly the one in the usual theory (see~\cite[Theorem 1.5.5]{NeussRadu}).
\begin{theorem}\label{tcomp1}
  Let the sequence $(v^\delta) \subseteq \Ltwo(0,\tf; \Ltwo(\Omegad))$
  with $\delta > 0$ and $\delta\to0$ be such that there is $C>0$ and it holds
  for all $\delta>0$ that
  \begin{equation*}
    \sqrt{\delta} \|v^\delta\|_{\Ltwo(0,\tf;\Ltwo(\Omegad))} \leqslant C.
  \end{equation*}
  Then there is a subsequence that we again denote by $(v^\delta)$
  and a function $v^0 \in \Ltwo(0,\tf;\Loper)$ such that $v^\delta$ mesh two-scale converges to~$v^0$.
\end{theorem}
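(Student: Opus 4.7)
The plan is to adapt the classical Neuss-Radu compactness argument for two-scale convergence to the present setting of one-dimensional periodic meshes with an additional time parameter. The strategy views the mesh two-scale pairing on the left-hand side of~\eqref{vdelta} as the action of a linear functional on the space of admissible test functions, and extracts a subsequential limit via a Banach-Alaoglu / Riesz representation argument.

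Concretely, I would first define for each $\delta > 0$ the linear functional
\begin{equation*}
   L_\delta(\psi) := \delta \int_0^{\tf} \int_{\Omegad} v^\delta(t,\xb)\, \psi\!\left(t,\xb,\tfrac{\xb}{\delta}\right) \mathrm{d}s(\xb)\, \mathrm{d}t
\end{equation*}
on $\psi \in C^\infty([0,\tf]; C^\infty(\overline{\Omega}; \Hper(\Gamma_Y)))$. Applying Cauchy-Schwarz in $\Ltwo(0,\tf;\Ltwo(\Omegad))$ yields
\begin{equation*}
   |L_\delta(\psi)| \leqslant \sqrt{\delta}\,\|v^\delta\|_{\Ltwo(0,\tf;\Ltwo(\Omegad))} \cdot \sqrt{\delta}\,\|\psi(\cdot,\cdot,\cdot/\delta)\|_{\Ltwo(0,\tf;\Ltwo(\Omegad))}.
\end{equation*}
The first factor is uniformly bounded by hypothesis; for the second, the same scaling argument used after Definition~\ref{dmts} (applied to $|\psi|^2$ rather than to $\mathds{1}_{\Omegad}$) shows that as $\delta \to 0$,
\begin{equation*}
   \delta\,\|\psi(\cdot,\cdot,\cdot/\delta)\|^2_{\Ltwo(0,\tf;\Ltwo(\Omegad))} \longrightarrow \|\psi\|^2_{\Ltwo(0,\tf;\Loper)}.
\end{equation*}
Hence the family $(L_\delta)$ is uniformly bounded with respect to the norm of $\Ltwo(0,\tf;\Loper)$.

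Next I would choose a countable dense subset $(\psi_k)$ of $\Ltwo(0,\tf;\Loper)$ that is contained in the admissible class of test functions (density of smooth functions in $\Ltwo(0,\tf;\Loper)$ and of $C^\infty$ in $\Hper(\Gamma_Y)$). A standard diagonal extraction produces a subsequence, still denoted $(v^\delta)$, along which $L_\delta(\psi_k)$ converges for every $k$. By the uniform bound and density, $L_\delta(\psi) \to L(\psi)$ for every $\psi$ in the admissible class, and $L$ extends by continuity to a bounded linear functional on the Hilbert space $\Ltwo(0,\tf;\Loper)$. The Riesz representation theorem then delivers a unique $v^0 \in \Ltwo(0,\tf;\Loper)$ realizing $L$, which is precisely the mesh two-scale limit of $(v^\delta)$.

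The main technical obstacle is the scaling convergence $\delta\,\|\psi(\cdot,\cdot,\cdot/\delta)\|^2_{\Ltwo(0,\tf;\Ltwo(\Omegad))} \to \|\psi\|^2_{\Ltwo(0,\tf;\Loper)}$. For $\psi$ continuous in $\xb$ and $\Hper$ in $\yb$, one would decompose $\Omega$ into the $\delta$-periodic cells $\delta(Y + (n_1,n_2))$, approximate $\psi(t,\xb,\xb/\delta)$ on each $\delta$-cell by its value at the cell center times the pullback on $\Gamma_Y$, and estimate the error by the modulus of continuity of $\psi$ in~$\xb$; the leading term is exactly $|\Omega|\,|\Gamma_Y|^{-1}$ times a Riemann-sum approximation of $\int_\Omega \int_{\Gamma_Y}|\psi|^2$. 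This calculation, together with density of smooth $\psi$ in $\Ltwo(0,\tf;\Loper)$, justifies the bound on $L_\delta$ on all admissible test functions and is the only place where the periodic structure of $\Gamma_Y$ and the ratio $L_i/\delta \in \ZN$ enter essentially. Everything else is a routine duality extraction.
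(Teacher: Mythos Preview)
Your proposal is correct and is precisely the approach the paper has in mind: the paper does not give its own proof but simply states that it ``follows exactly the one in the usual theory'' and cites \cite[Theorem~1.5.5]{NeussRadu}, which is the Neuss-Radu compactness argument you have reproduced. The only point worth adding is that the scaling convergence you identify as the main technical step is exactly the ``admissibility'' lemma in the two-scale literature; once it is established for the dense class $C^\infty([0,\tf];C^\infty(\overline{\Omega};\Hper(\Gamma_Y)))$ (and here $\Hper(\Gamma_Y)\hookrightarrow C(\Gamma_Y)$ since the graph is one-dimensional), the rest is, as you say, routine duality extraction.
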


\begin{remark} The time-dependent mesh two-scale convergence implies \emph{a
    posteriori} the usual mesh two-scale convergence, since
  Theorem~\ref{tcomp1} is also valid when the family $(v^\delta)$ is
  independent of time.
  \end{remark}

Note, that Definition~\ref{dmts} of the mesh two-scale convergence uses highly smooth test
  functions, but it can be generalized to functions with less
  regularity.
  Especially, test functions in  $\Ltwo(0,\tf; \Ltwo(\Gamma_Y; C(\Omega))$,
  which we call admissible functions, can be considered.

\subsection{Properties of the homogenized model}
\label{sec:homogenized_model:properties}

\begin{lemma}\label{lcanon}
  The problem~\eqref{canon} has a unique solution up to an additive constant in $\ZR^2$.
\end{lemma}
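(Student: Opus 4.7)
The plan is to apply Lax--Milgram to the variational problem~\eqref{canon} after quotienting by the kernel of the bilinear form. Since the equation~\eqref{canon} is decoupled in the two components of $\phib$, it suffices to solve, for each $k\in\{1,2\}$, the scalar problem
\begin{equation*}
   \int_{\Gamma_Y} \partial_\Gamma \phi_k(\yb)\,\partial_\Gamma \psi(\yb)\,\mathrm{d}s(\yb)
   = -\int_{\Gamma_Y} t_k(\yb)\,\partial_\Gamma \psi(\yb)\,\mathrm{d}s(\yb),
   \qquad \forall \psi \in \Hper(\Gamma_Y),
\end{equation*}
where $t_k$ is the $k$-th component of the tangent field $\tb$. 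The sought $\phib$ is then $(\phi_1,\phi_2)^T$, and uniqueness up to an additive vector constant in $\mathbb{R}^2$ corresponds to uniqueness of each $\phi_k$ up to an additive real constant.

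First I would identify the kernel of the bilinear form $a(\phi,\psi) := \int_{\Gamma_Y} \partial_\Gamma \phi\,\partial_\Gamma \psi\,\mathrm{d}s$. Any function with $\partial_\Gamma \phi = 0$ a.e.\ is constant on each edge, and since $\Gamma_{Y,\#}$ is connected (by assumption~\eqref{opposite}), continuity across the vertices (built into $\Hper(\Gamma_Y)$) forces $\phi$ to be globally constant. Hence the kernel consists exactly of the constants. I would then work on the quotient space $V := \Hper(\Gamma_Y)/\mathbb{R}$, equivalently realized as the closed subspace $\Hperaver$ of $\Hper(\Gamma_Y)$ of functions with zero mean over $\Gamma_Y$.

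Next I would verify the Lax--Milgram hypotheses on $V$. Continuity of $a$ is immediate from Cauchy--Schwarz. Coercivity reduces to a Poincaré--Wirtinger inequality: there exists $C>0$ such that
\begin{equation*}
   \|\phi\|_{\Ltwo(\Gamma_Y)} \leq C\,\|\partial_\Gamma \phi\|_{\Ltwo(\Gamma_Y)}
   \qquad\text{for all }\phi \in \Hperaver.
\end{equation*}
This is the main technical step. I would prove it by a standard compactness--contradiction argument: assume a sequence $(\phi_n) \subset \Hperaver$ with $\|\phi_n\|_{\Ltwo}=1$ and $\|\partial_\Gamma \phi_n\|_{\Ltwo}\to 0$; by Rellich-type compactness on the finite graph $\Gamma_{Y,\#}$ (each edge is a compact interval and the graph has finitely many edges, so $H^1 \hookrightarrow L^2$ compactly), extract a subsequence converging in $\Ltwo$ to some $\phi_\infty$. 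The weak lower semicontinuity of the norm gives $\partial_\Gamma \phi_\infty = 0$, so $\phi_\infty$ is constant, and the zero-mean condition forces $\phi_\infty = 0$, contradicting $\|\phi_\infty\|_{\Ltwo}=1$.

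Finally, the right-hand side $L(\psi) := -\int_{\Gamma_Y} t_k\,\partial_\Gamma \psi\,\mathrm{d}s$ is clearly continuous on $\Hper(\Gamma_Y)$ (since $|t_k|\leq 1$), and it is well-defined on the quotient because $L(c) = 0$ for any constant $c$, $\partial_\Gamma c = 0$. Lax--Milgram applied to $a$ and $L$ on $V$ produces a unique solution $\phi_k \in V$, which lifts to a solution in $\Hper(\Gamma_Y)$ unique modulo $\mathbb{R}$. The main delicate point is the Poincaré--Wirtinger inequality on the periodic graph; everything else is classical variational machinery.
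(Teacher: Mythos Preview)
Your proposal is correct and follows essentially the same approach as the paper: decouple into two scalar problems, restrict to the zero-mean subspace $\Hperaver$, and apply Lax--Milgram. The paper simply asserts that the seminorm $\|\partial_\Gamma\theta\|_{\Ltwo(\Gamma_Y)}$ is a norm on $\Hperaver$ and invokes Lax--Milgram directly, whereas you supply the details (identification of the kernel via connectedness, the Poincar\'e--Wirtinger inequality by compactness--contradiction, and the observation that the right-hand side annihilates constants, which the paper phrases as ``no compatibility condition is needed'').
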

\begin{proof}
  First note that if $\phib$ is a solution of~\eqref{canon}, then for
  any constant $\Cb \in \ZR^2$, $\phib + \Cb$ is also a solution
  of~\eqref{canon}. Therefore, we consider the two equations of~\eqref{canon} separately and restrict the problem to the periodic space with vanishing average
    $\Hperaver := \big\lbrace \theta \in \Hper(\Gamma_Y)
    \text{ such that }
    \int_{\Gamma_Y} \theta(\yb) \, \mathrm{d}\yb = 0 \big\rbrace$,
  on which the semi-norm
  $\| \partial_{\Gamma} \theta \|_{\Ltwo(\Gamma_Y)}$ is a norm.
  Problem~\eqref{canon} becomes then
  a classical elliptic problem and its well-posedness follows from
the Lax-Milgram theorem.   Note that no compatibility condition, that
is sometimes called necessary condition,
is needed due to  the special right hand side.
\end{proof}

\begin{lemma}\label{lA}
  The matrix $\Abbh$ defined in (\ref{eq:A_hom}) is
  symmetric and positive definite.
\end{lemma}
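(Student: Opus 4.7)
The plan is as follows. Symmetry is immediate from the definition~\eqref{eq:A_hom}: each integrand is an outer product of a vector with itself, hence a symmetric matrix, and the property is preserved under integration.

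For positive semi-definiteness, I would take an arbitrary $\xib \in \ZR^2$ and rewrite
\[
\xib^T \Abbh \xib = \frac{1}{|\Gamma_Y|} \int_{\Gamma_Y} \bigl| \xib \cdot \tb(\yb) + \partial_{\Gamma}(\xib \cdot \phib)(\yb) \bigr|^2 \, \mathrm{d}s(\yb) \geqslant 0,
\]
where I use that $\xib \cdot \partial_\Gamma \phib = \partial_\Gamma(\xib \cdot \phib)$ since $\xib$ is constant.

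The non-trivial part is strict positive definiteness. Suppose $\xib^T \Abbh \xib = 0$; then the integrand above vanishes a.e., so setting $\psi := \xib \cdot \phib \in \Hper(\Gamma_Y)$ we have $\partial_{\Gamma} \psi(\yb) = -\xib \cdot \tb(\yb)$ a.e. on $\Gamma_Y$. For any \cycle~$C \in \cC(\Gamma_{Y,\#}')$, identity~\eqref{eq:integration_fprime_cycle:2} applied to $\psi$ yields
\[
0 = \int_{\Gamma_Y} \partial_\Gamma \psi(\yb) \chi_C(\yb) \, \mathrm{d}s(\yb) = - \xib \cdot \int_{\Gamma_Y} \tb(\yb) \chi_C(\yb) \, \mathrm{d}s(\yb).
\]
It remains to exhibit two \cycles~$C_1, C_2$ of $\Gamma_{Y,\#}'$ for which the vectors $\tb_{C_i} := \int_{\Gamma_Y} \tb(\yb) \chi_{C_i}(\yb)\,\mathrm{d}s(\yb)$ are linearly independent. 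By assumption~\eqref{opposite} there exist trails $T_i$ in $\Gamma_Y'$ connecting $\yb_i$ to $\yb_i + \eb_i$ for $i=1,2$, which become {\cycles} $C_i$ in $\Gamma_{Y,\#}'$ after identification of opposite points. Applying Lemma~\ref{LemaIntegralFirstKind} componentwise with the test function $\varphi(\yb) = \yb \cdot \eb_j$ (for which~\eqref{eq:deriv_on_mesh:macro} gives $\partial_\Gamma \varphi(\yb) = \tb(\yb) \cdot \eb_j$) on the trail $T_i$ yields
\[
\eb_i \cdot \eb_j = (\yb_i + \eb_i) \cdot \eb_j - \yb_i \cdot \eb_j = \int_{T_i} \partial_\Gamma \varphi(\yb) \chi_{T_i}(\yb) \,\mathrm{d}s(\yb),
\]
hence $\tb_{C_i} = \eb_i$ for $i=1,2$. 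Consequently $\xib \cdot \eb_1 = \xib \cdot \eb_2 = 0$, which forces $\xib = \0b$, completing the proof.

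The only genuine obstacle is the positive definiteness step, and the key insight is already prepared by the earlier analysis: the connectivity assumption~\eqref{opposite} plus Lemma~\ref{LemaIntegralFirstKind} produces two distinguished {\cycles} whose tangent integrals are exactly the canonical basis vectors $\eb_1$ and $\eb_2$. Everything else is routine manipulation.
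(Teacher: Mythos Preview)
Your proof is correct and follows essentially the same approach as the paper: both establish positive semidefiniteness from the outer-product form, then show injectivity by integrating the vanishing integrand $(\tb + \partial_\Gamma \phib)\cdot\xib$ along the two trails from assumption~\eqref{opposite}, using Lemma~\ref{LemaIntegralFirstKind} together with the periodicity of $\phib$ to obtain $\eb_i\cdot\xib = 0$. The only cosmetic difference is that you package the periodicity step via~\eqref{eq:integration_fprime_cycle:2} on the scalar function $\psi = \xib\cdot\phib$, whereas the paper applies Lemma~\ref{LemaIntegralFirstKind} directly to $\phib$ over the trail.
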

\begin{proof}
  From the definition we see that $\Abbh$ is symmetric and positive
  semidefinite. Therefore to show that it is positive definite we only
  have to see that it is injective.  Let $\xib \in \ZR^2$ be in the
  kernel of $\Abbh$. Thus $\Abbh\xib=0$ and thus
  \begin{equation*}
    \begin{aligned}
      0=\xib^T \Abbh \xib = \frac{1}{|\Gamma_Y|}\int_{\Gamma_Y} \xib^T \left(\tb +
        \partial_{\Gamma} \phib\right)\left( \tb +
        \partial_{\Gamma} \phib\right)^T \xib ds(\yb) =  \frac{1}{|\Gamma_Y|}
      \int_{\Gamma_Y} \left(\left( \tb + \partial_{\Gamma}
          \phib\right) \cdot \xib\right)^2 ds(\yb).
    \end{aligned}
  \end{equation*}
  This implies that
  \begin{equation*}
    \left(  \tb(\yb) + \partial_{\Gamma} \phib(\yb)\right) \cdot  \xib = 0, \qquad \mbox{ for a.e. } \yb \in \Gamma_Y.
  \end{equation*}

  Let $T \in \mathcal{T} (\Gamma_{Y}')$ be an arbitrary trail of $\Gamma_Y'$
  parametrized by $\gamb_T$, which connects opposite points $\yb_1$ and $\yb_1 + \eb_1$ that exist by \eqref{opposite}.
  By summing up results of Lemma~\ref{LemaIntegralFirstKind} for the functions
  $\yb \mapsto \phib(\yb)$ and $\yb \mapsto y_j$ for $j\in\{1,2\}$ we find
  \begin{equation*}
    0 =\int_{\Gamma_Y} (\partial_{\Gamma} \phib(\yb) + \tb (\yb))
    \chi_T(\yb) \,\mathrm{d}s(\yb) \cdot \xib =  \0b \cdot \xib
    + \eb_1 \cdot \xib = \eb_1 \cdot \xib.
  \end{equation*}

  Similarly, we take an arbitrary trail $T\in \mathcal{T} (\Gamma'_Y)$
  connecting opposite points $\yb_2$ and $\yb_2 + \eb_2$ that again exists by \eqref{opposite}
  to obtain $\eb_2 \cdot \xib=0$.
  Therefore $\eb_1\cdot \xib = \eb_2\cdot \xib = 0 $ which implies
  $\xib=0$ and so $\Abbh$ is injective.
\end{proof}

The following lemma states the existence and uniqueness of the limit problem~\eqref{P}.

\begin{lemma}[Well-posedness of the limit problem]\label{lmodel}
  The problem~\eqref{P} has a unique solution that satisfies
  \begin{equation*}
    u^0 \in \Ltwo(0,\tf;\cH)\cap \Ltwo(0,\tf; \Htwo(\Omega)), \qquad
    \partial_t{u}^0 \in \Ltwo(0,\tf;\Ltwo(\Omega)).
  \end{equation*}
\end{lemma}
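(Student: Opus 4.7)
The plan is to apply classical Hilbert-space theory for linear parabolic equations, using that $a(\xb)\,\Abbh$ is bounded, symmetric and uniformly elliptic on $\Omega$. First I would transfer the data hypotheses of Theorem~\ref{theo:wellposedness} to the homogenized problem. By the Cauchy--Schwarz inequality in $\yb$, the assumption $f\in\Ltwo(0,\tf;\Ltwo(\Gamma_Y;C(\overline\Omega)))$ together with the definition~\eqref{eq:f_hom} yield $f_\homg\in\Ltwo(0,\tf;\Ltwo(\Omega))$, and from $u_\init\in C^1(\overline\Omega)$ with $u_\init|_{\Gamma_D}=0$ one has $u_\init\in\cH$.

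Next I would verify that the spatial bilinear form
$$
B(u,v) := \int_\Omega a(\xb)\,\Abbh\,\nabla u(\xb)\cdot\nabla v(\xb)\,\mathrm{d}\xb
$$
is continuous, symmetric and coercive on $\cH$. Continuity is immediate from $a\in C(\overline\Omega)$ and boundedness of $\Abbh$; symmetry is given by Lemma~\ref{lA}; coercivity follows from the lower bound $a\geqslant a_{\min}>0$ from~\eqref{Aa}, the positive definiteness of $\Abbh$ (Lemma~\ref{lA}), and the Poincar\'e inequality on $\cH$, which holds since $\Gamma_D$ has positive one-dimensional measure. Together with $f_\homg\in\Ltwo(0,\tf;\Ltwo(\Omega))\subset\Ltwo(0,\tf;\cH')$ and $u_\init\in\cH$, the classical existence theorem for linear parabolic problems in variational form (see e.g.~\cite[Theorem~1, p.~558]{DL}) furnishes a unique $u^0\in\Ltwo(0,\tf;\cH)\cap C([0,\tf];\Ltwo(\Omega))$ with $\partial_t u^0\in\Ltwo(0,\tf;\cH')$ solving~\eqref{P}.

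Finally I would upgrade the regularity of this solution. Replaying the last step of the proof of Lemma~\ref{lapriori} on the continuous domain~$\Omega$, i.e., testing the Galerkin approximation with $\partial_t u^0_N$, using symmetry of $B$ together with the bounds on $f_\homg$ and the $\cH$-regularity of $u_\init$, one obtains $\partial_t u^0\in\Ltwo(0,\tf;\Ltwo(\Omega))$. Then, for almost every $t\in(0,\tf)$, equation~\eqref{P:PDE} reads as an elliptic equation on the rectangle $\Omega$ with right-hand side $f_\homg(t,\cdot)-\rho c_p\partial_t u^0(t,\cdot)\in \Ltwo(\Omega)$ and mixed homogeneous Dirichlet/Neumann boundary conditions. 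Standard $\Htwo$ elliptic regularity for such mixed problems on convex polygonal domains then yields $u^0\in\Ltwo(0,\tf;\Htwo(\Omega))$. The most delicate point is precisely this last step: on a rectangle the transition between Dirichlet and Neumann parts can in general destroy $\Htwo$ regularity at the boundary junction points, so the conclusion implicitly uses geometric compatibility of $\Gamma_D$ with the edges/corners of $\Omega$.
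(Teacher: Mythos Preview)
Your proof is correct and follows essentially the same route as the paper, which simply cites the positive definiteness of $\Abbh$ (Lemma~\ref{lA}), the Poincar\'e inequality, \cite[Theorem~1, p.~558]{DL} for existence, and Evans for parabolic regularity; you have merely unpacked these references in more detail. Your closing caveat about $\Htwo$ regularity at Dirichlet--Neumann transition points is a genuine subtlety that the paper's one-line proof also leaves implicit.
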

\begin{proof}
   The statement is a direct consequence of the
   symmetry and the positive-definitness of the matrix $\Abbh$ by Lemma~\ref{lA}, the Poincar{\'e} inequality~\cite[Theorem~1 on page 558]{DL} and
   usual regularity theory for linear parabolic equations (see \cite{Evans}).
\end{proof}

\subsection{Proof of Theorem~\ref{theo:convergence}: mesh two-scale convergence to homogenized model}
\label{sec:homogenized_model:convergence}

\begin{proof}[Proof of Theorem~\ref{theo:convergence}]
  With the uniform stability estimates~\eqref{eq:stability}
  the assumptions of Theorem~\ref{tcomp1} are fulfilled, and there exist
  $u^0,z^0, w^0 \in \Ltwo(0,\tf;\Ltwo(\Omega;\Ltwo(\Gamma_Y)))$ and a sub-sequence of $(u^\delta)$ that is again denoted by $(u^\delta)$  such that
    \begin{align}
                     u^\delta &\xrightharpoonup{\mathrm{m2s}} u^0, &
     \partial_\Gamma u^\delta &\xrightharpoonup{\mathrm{m2s}} z^0, &
     \partial_t u^\delta &\xrightharpoonup{\mathrm{m2s}} w^0\ .
     \label{conv1}
  \end{align}
  The remainder of the proof is in seven steps.
  In step 1 and 2 we prove that $u^0$ is independent of $\yb$, first for strongly connected graphs
  and then for more general graphs.
  Then, in step 3 we show that
  $w^0 = \partial_t u^0$ and that there exists $u^1 \in \Ltwo(0,\tf;\Hoper)$
  with $\partial_t{u}^0 \in \Ltwo(0,\tf;\Ltwo(\Omega))$ such that
  $z^0 = \nabla u^0 \cdot \tb + \partial_{\Gamma,\yb} u^1$. We
  shall also prove that boundary (step 4) and initial conditions (step 5) are
  respected when passing to the limit.
  Finally, in step 6 and 7 we show that the limit solution $u^0$ satisfies
  the variational equation~\eqref{P:PDE} with the homogenized source $f_\homg$ and homogenized tensor $\Abbh$ defined by~\eqref{eq:f_hom} and \eqref{eq:A_hom}, respectively.

  \paragraph{1. Independence of $u^0$ from $\yb$ for strongly connected graphs.} %
  Let us suppose that the graph is strongly connected.
  We take an arbitrary
  $\ww \in C^\infty_c(\Omega)$
  and $\theta\in \Hper(\Gamma_Y)$, and an arbitrary
  {\cycle} $C \in \mathcal {C} (\Gamma_{Y,\#}') $. Since the function
  $\xb \mapsto
  u^\delta(\xb)\ww(\xb)\theta\left(\frac{\xb}{\delta}\right)$
  is in $\Hone_0(\Omegad)$ applying Lemma~\ref{LemaPhiIbyP} and multiplying the equations by $\delta^2\phi$, $\phi \in C^\infty([0,\tf])$ and integrating over $[0,\tf]$, we obtain
  \begin{align}
      \nonumber
      0 &= \delta^2 \int_0^{\tf} \phi(t) \int_{\Omegad} {\partial_\Gamma} \left( u^\delta (t,\xb) \ww
      \left(\xb \right)\theta\left(\frac{\xb}{\delta}\right) \right)
    \chi_C\left(\frac{\xb}{\delta}\right) \, \mathrm{d} s (\xb) \\
    \nonumber
	&=\delta^2 \int_0^{\tf} \phi(t) \int_{\Omegad} \bigg[
    \partial_\Gamma u^\delta(t,\xb) \ww
      \left(\xb \right)\theta\left(\frac{\xb}{\delta}\right) +
      u^\delta(t,\xb) \nabla \ww \left(\xb
      \right) \cdot \tb\left( \frac{\xb}{\delta} \right) \theta\left(\frac{\xb}{\delta}\right) \bigg]
    \chi_C\left(\frac{\xb}{\delta}\right) \, \mathrm{d} s (\xb)
    \, \mathrm{d}t \\
    &\quad + \delta \int_0^{\tf} \phi(t) \int_{\Omegad} \bigg[
      u^\delta(t,\xb) \ww \left(\xb
      \right)\partial_{\Gamma} \theta\left(\frac{\xb}{\delta}\right) \bigg]
    \chi_C\left(\frac{\xb}{\delta}\right) \, \mathrm{d} s (\xb) \, \mathrm{d}t.
    \label{eq:vd_with_w_and_phi}
  \end{align}
  Here, we used~\eqref{eq:deriv_on_mesh:micro_and_macro} and~\eqref{eq:deriv_on_mesh:macro}.
  Now, we let $\delta \to 0$ and apply the
  convergences in \eqref{conv1} on all three addends in
  the integral on the right hand side. Due to the factor~$\delta^2$,
  the first two addends vanish while the last remains leading to
  \begin{equation*}
    0 =  \int_0^{\tf} \phi(t) \int_{\Omega}\int_{\Gamma_Y} u^0(t, \xb,\yb) \ww (\xb)
    \partial_{\Gamma} \theta \left(\yb\right) \chi_C
    (\yb) \,\mathrm{d}s(\yb) \, \mathrm{d} \xb \, \mathrm{d}t.
  \end{equation*}
  Then, the arbitrariness of $\ww$ and $\phi$ implies
  \begin{equation*}
    0 =  \int_{\Gamma_Y} u^0(t, \xb,\yb) \partial_{\Gamma} \theta
    \left(\yb\right) \chi_C (\yb) \, \mathrm{d}s(\yb)
  \end{equation*}
  and in view of Lemma~\ref{LemaPhiConst} we find that $u^0$ is constant on $\Gamma_Y$.

\paragraph{2. Independence of $u^0$ from $\yb$ for general graphs}
Let $\Gamma_Y$ be an arbitrary graph in the unit cell that satisfies the
assumptions of the Theorem~\ref{theo:convergence}.
Let $\Gamma_{Y,-}$ be the graph obtained from $\Gamma_Y$ by
reversing orientations of all edges, and let
$\widetilde{\Gamma}_Y : = \Gamma_Y \cup \Gamma_{Y,-}$.  This graph has the same
number of vertices as $\Gamma_Y$ and the double number of edges which
topologically coincide, but we include both possible orientations.  Thus, for
each edge $e_0$ in $\Gamma_Y$ there are two edges $e_1$ and $e_2$ in $\widetilde{\Gamma}_Y$
which coincide with $e_0$, and such that $e_0$ and $e_1$ have the same
orientation, opposite from the orientation of $e_2$.  It is clear that $G_Y$ is
strongly connected.  We analogously define $\Gamma^\delta_-$ and
$\widetilde{\Gamma}^\delta$.

For all $\delta>0$ and $u^\delta$ on $[0,\tf] \times \Gamma^\delta$ we define
$\widetilde{u}^\delta$ on $[0,\tf] \times \widetilde{\Gamma}^\delta$ such that
the value on both doubled edges in $\Gamma^\delta$ coincide with the values of
$u^\delta$ on the original edge.  In other words, for original edge $e_0$ in
$\Gamma^\delta$ and its copies $e_1, e_2$ in $\widetilde{\Gamma}^\delta$ (one
oriented equally, one oriented oppositely) we have
$$\widetilde{u}^\delta\big|_{\xb \in e_1} = \widetilde{u}^\delta\big|_{\xb \in e_2} = u^\delta\big|_{\xb \in e_0}.$$

We easily see that we also have
\begin{align*}
  \partial_t \widetilde{u}^\delta\big|_{\xb \in e_1} = \partial_t \widetilde{u}^\delta\big|_{\xb \in e_2} = \partial_t  u^\delta\big|_{\xb \in e_0}
  \text{ and } \partial_{\Gamma}{\widetilde{u}^\delta}\big|_{\xb \in e_1} = -\partial_{\Gamma}{\widetilde{u}^\delta}\big|_{\xb \in e_2} = \partial_{\Gamma} u^\delta\big|_{\xb \in e_0},
\end{align*}
so we see that families
\begin{equation*}
      \sqrt{\delta} \|\widetilde{u}^\delta\|_{\Ltwo(0,\tf;\Ltwo(\widetilde{\Gamma}^\delta))},
      \sqrt{\delta} \|\partial_{\Gamma} \widetilde{u}^\delta
    \|_{\Ltwo(0,\tf;\Ltwo(\widetilde{\Gamma}^\delta))} ,
     \sqrt{\delta}
      \|\partial_t{\nu}^\delta\|_{\Ltwo(0,\tf;\Ltwo(\widetilde{\Gamma}^\delta))}
  \end{equation*}
  are bounded (by  $2C$, where $C$ is the constant in~\eqref{eq:stability}), so all families have convergent
  subsequences (still denoted by $\delta$). Furthermore $\widetilde{\Gamma}_Y$
  is strongly connected, so we can apply the result obtained in the step 1.
  Thus the two-scale limit $\widetilde{u}^0$ of the sequence
  $(\widetilde{u}^\delta)_\delta$ is independent of the fast variable.  This
  means especially that $\widetilde{u}^0$ takes the same value for both edges
  in $\widetilde{\Gamma}_Y$ of one edge in $\Gamma_Y$.

  Now, we define
  $u^0: (0,\tf) \times \Omega \times \Gamma_Y \to \ZR, (t,\xb,\yb) \mapsto
  \widetilde{u}^0(t,\xb)$
  and show that it is the two-scale limit of $u^\delta$. For this we consider
  for each test function $\psi \in \Ltwo(0,\tf;\Ltwo(\Gamma_Y;C(\Omega)))$ a
  corresponding function
  $\widetilde{\psi} \in \Ltwo(0,\tf;\Ltwo(\widetilde{\Gamma}_Y;C(\Omega)))$ with
  $\widetilde{\psi}(\cdot,\cdot,\yb) := \psi(\cdot,\cdot,\yb)$ if
  $\yb \in \Gamma_Y$ and $\widetilde{\psi}(\cdot,\cdot,\yb) := 0$ otherwise.
  This yields
 \begin{equation*}
    \begin{aligned}
      &\hspace{-3ex} \delta \int_0^{\tf} \int_{\Omegad} u^\delta (t,\xb) \psi
      \left(t,\xb, \frac{\xb}{\delta}\right) \,\mathrm{d}s(\xb) \, \mathrm{d}t
      = \delta \int_0^{\tf} \int_{\widetilde{\Gamma}^\delta}
      \widetilde{u}^\delta (t,\xb) \widetilde{\psi} \left(t,\xb,
        \frac{\xb}{\delta}\right) \,\mathrm{d}s(\xb) \, \mathrm{d}t \\
      & \to \int_0^{\tf} \int_{\Omega} \int_{\widetilde{\Gamma}_Y}
      \widetilde{u}^0(t,\xb) \widetilde{\psi} (t,\xb, \yb)
      \,\mathrm{d}s(\yb)\,\mathrm{d}\xb \, \mathrm{d}t = \int_0^{\tf}
      \int_{\Omega} \int_{\Gamma_Y} u^0(t,\xb) \psi (t,\xb, \yb)
      \,\mathrm{d}s(\yb)\,\mathrm{d}\xb \, \mathrm{d}t.
     \end{aligned}
  \end{equation*}
This proves that $u^0$, which does not dependent on $\yb$ by its definition, is
the two scale limit of the sequence $(u^\delta)_\delta$.

  \paragraph{3. Form of limits.}  First we shall prove that
    $w^0 = \partial_t u^0$. To do so, we take an arbitrary admissible
  function $\psi$ such that $\psi(0,\cdot,\cdot) = \psi(\tf,\cdot,\cdot) = 0$, so we
  have
  \begin{equation*}
    0 = \delta \int_0^{\tf} \frac{\text{d}}{\text{d}t} \int_{\Omegad}
    u^\delta(t,\xb) \psi\Big( t,\xb,\frac{\xb}{\delta} \Big) \,
    \mathrm{d}s(\xb) \, \mathrm{d}t,
  \end{equation*}
  so that
  \begin{equation*}
    \delta \int_0^{\tf} \int_{\Omegad}
    \partial_t{u}^\delta(t,\xb) \psi\Big( t,\xb,\frac{\xb}{\delta} \Big) \,
    \mathrm{d}s(\xb) \, \mathrm{d}t = - \delta \int_0^{\tf}
    \int_{\Omegad}
    u^\delta(t,\xb) \partial_t \psi\Big(
    t,\xb,\frac{\xb}{\delta} \Big) \,
    \mathrm{d}s(\xb) \, \mathrm{d}t.
  \end{equation*}
  Using then the two-scale convergence of $u^\delta$ to $u^0$ and
  $\partial_t{u}^\delta$ to $w^0$ leads to
  \begin{equation*}
    \int_0^{\tf}\int_{\Omega} \int_{\Gamma_Y} w^0(t,\xb,\yb) \psi (t,\xb, \yb)
    \,\mathrm{d}s(\yb)\,\mathrm{d}\xb \, \mathrm{d}t = -
    \int_0^{\tf}\int_{\Omega} \int_{\Gamma_Y} u^0(t,\xb)
    \partial_t \psi (t,\xb, \yb)
    \,\mathrm{d}s(\yb)\,\mathrm{d}\xb \, \mathrm{d}t.
  \end{equation*}
  Due to the arbitrariness of the function $\psi$, it turns out that
  $\partial_t{u}^0$ exists, that $\partial_t{u}^0 = w^0$ and thus
  $\partial_t{u}^0 \in \Ltwo(0,\tf;\Ltwo(\Omega))$.

  Now, we shall prove that for the limit $z^0 =\nabla u^0\cdot \tb + \partial_{\Gamma,\yb} u^1$ holds. To do
  so, let us consider an arbitrary {\cycle} $C \in \mathcal {C} (\Gamma_{Y,\#}')$ and an arbitrary
  $\ww \in \cD (\Omega)$. Since the function
  $\xb \mapsto v^\delta (t,\xb) \ww \left(\xb \right)$
  is in $H^1_0(\Gamma^\delta)$ applying Lemma~\ref{LemaPhiIbyP} and multiplying the equality by $\delta\phi$ with a function
  $\phi\in C^\infty([0,\tf])$ and integrating over $[0,\tf]$ leads to
  $$
  \aligned
  0 &= \delta \int_0^{\tf} \phi(t) \int_{\Omegad} \partial_\Gamma \left( u^\delta (t,\xb) \ww \left(\xb
      \right) \right) \chi_C\left(\frac{\xb}{\delta}\right)\,\mathrm{d} s (\xb)\ .
  \endaligned
  $$
  Then~\eqref{eq:deriv_on_mesh:macro} implies
  $$
  \aligned
     &\delta \int_0^{\tf} \phi(t)\int_{\Omegad} \partial_\Gamma u^\delta (t,\xb) \ww(\xb)
    \chi_C\left(\frac{\xb}{\delta}\right) \, \mathrm{d} s(\xb) \,
    \mathrm{d}t \\
    &\qquad \hfill =
    - \delta \int_0^{\tf} \phi(t) \int_{\Omegad} u^\delta(t,\xb) \nabla
    \ww (\xb) \cdot \tb\left(\frac{\xb}{\delta}\right)\chi_C\left(\frac{\xb}{\delta}\right) \,
    \mathrm{d} s (\xb) \, \mathrm{d}t.
    \endaligned
  $$
  Now, taking the limit $\delta\to0$ and applying the mesh two scale limits~\eqref{conv1} on each side of the equation we
  obtain that
  \begin{multline*}
    \int_0^{\tf} \phi(t) \int_{\Omega} \int_{\Gamma_Y} z^0(t,\xb,\yb) \ww(\xb)
    \chi_C(\yb) \, \mathrm{d} s(\yb) \, \mathrm{d} \xb  \,
    \mathrm{d}t \\ = -
    \int_0^{\tf} \phi(t) \int_{\Omega} \int_{\Gamma_Y} u^0(t,\xb) \nabla \ww (\xb) \cdot \tb(\yb)
    \chi_C(\yb) \, \mathrm{d} s(\yb) \, \mathrm{d} \xb \, \mathrm{d}t,
  \end{multline*}
  for all $\ww \in C^\infty_c(\Omega)$ and all
  $\phi \in C^\infty([0,T])$.  Arbitrariness of $\phi$ then implies
    \begin{equation*}
      \int_{\Omega} \int_{\Gamma_Y} z^0(t,\xb,\yb) \ww(\xb)
      \chi_C(\yb) \, \mathrm{d} s(\yb) \, \mathrm{d} \xb = -
      \int_{\Omega}  u^0(t,\xb) \nabla_{\xb} \ww (\xb) \cdot \left( \int_{\Gamma_Y}\tb(\yb)
        \chi_C(\yb) \, \mathrm{d} s(\yb) \right)\, \mathrm{d} \xb .
  \end{equation*}
  Let us denote
  \begin{equation*}
    \tb_C := \int_{\Gamma_Y}\tb(\yb) \chi_C(\yb) \, \mathrm{d}s (\yb).
  \end{equation*}
Thus we obtain
  \begin{equation}
    \label{eq:form_of_limits:1}
    \int_{\Omega} \left(\int_{\Gamma_Y} z^0(t,\xb,\yb)  \chi_C(\yb) \, \mathrm{d} s(\yb) \right)\ww(\xb)
    \, \mathrm{d} \xb = -
    \int_{\Omega}  u^0(t,\xb) \nabla \ww (\xb) \cdot \tb_C\, \mathrm{d} \xb .
  \end{equation}
  Then, by assumption on $\Gamma_Y$, if we consider trails $T_i$ that
  connect the opposite points $\yb_i$ and $\yb_i + \eb_i$ (\ie, the first and the last vertices
  are $\yb_i$ and $\yb_i + \eb_i$) $i=1,2$ it is a {\cycle} $C_i$ on $\Gamma_{Y,\#}'$.
  Now,  using Lemma~\ref{LemaIntegralFirstKind}
  with the function $\varphi_j(\yb) = \yb \cdot \eb_j$ (then~\eqref{eq:deriv_on_mesh:macro} implies
  $\partial_{\Gamma} \varphi_j(\yb) = \tb(\yb) \cdot \eb_j$) for
  $j \in \lbrace 1, 2\rbrace$ leads to
  \begin{align*}
    \tb_{C_i}\cdot\eb_j &:= \int_{\Gamma_Y}\tb(\yb)\cdot\eb_j \chi_{C_i}(\yb) \, \mathrm{d}s (\yb)
    = \int_{\Gamma_Y}\tb(\yb)\cdot\eb_j \chi_{T_i}(\yb) \, \mathrm{d}s (\yb)
    = \int_{\Gamma_Y}\partial_{\Gamma} \varphi_j(\yb) \chi_{T_i}(\yb) \, \mathrm{d}s (\yb)\\
    &= \varphi_j(\yb_i+\eb_i) - \varphi_j(\yb_i) = \eb_i\cdot\eb_j
  \end{align*}
  and, hence,  $\tb_{C_i} = \eb_i$, $i\in \{1,2\}$.
  That means that there are at least two linear independent vectors~$\tb_C$
  and the equality~\eqref{eq:form_of_limits:1} defines the weak derivative $\nabla_\xb u^0$.
%
  Since the function
  \begin{equation*}
    (t,\xb) \mapsto \int_{\Gamma_Y} z^0(t,\xb,\yb)  \chi_C(\yb) \, \mathrm{d} s(\yb)
  \end{equation*}
  belongs to $\Ltwo(0,\tf;\Ltwo(\Omega))$  the weak
  derivative $\nabla_{\xb} u^0$ belongs to
  $\Ltwo(0,\tf;\Ltwo(\Omega))$ as well. Hence, $u^0 \in \Ltwo(0,\tf;\Hone(\Omega))$.
  Now, with the equality $\nabla w\cdot\tb_C = \div(w \tb_C)$ as $\tb_C$ is a constant vector
  integrating by parts on the right
  hand side of~\eqref{eq:form_of_limits:1} we obtain
  \begin{equation*}
    \int_{\Omega} \int_{\Gamma_Y} \ww(\xb) \left( z^0(t,\xb,\yb)
      - \nabla_\xb u^0 (t, \xb) \cdot \tb(\yb) \right) \chi_C(\yb)
    \,\mathrm{d}s(\yb) \,\mathrm{d}\xb= 0.
  \end{equation*}
  Then, arbitrariness of $\ww$ implies
  \begin{equation*}
    \int_{\Gamma_Y} \left( z^0(t,\xb,\yb)  - \nabla_\xb u^0 (t, \xb) \cdot \tb(\yb) \right) \chi_C(\yb) \,\mathrm{d} s(\yb)= 0.
  \end{equation*}
  As we assumed arbitrariness of $C \in \cC(\Gamma_{Y,\#}')$
  Lemma~\ref{RemarkAntider} now implies that there exists a function $u^1(t,\xb,\cdot) \in \Hper(\Gamma_Y)$ such that
  \begin{equation}
    \label{eq:form_of_limits:2}
    \partial_{\Gamma,\yb} u^1(t, \xb,\yb) =
    z^0(t, \xb,\yb) - \nabla_\xb u^0 (t, \xb) \cdot \tb(\yb).
  \end{equation}
  Since the right hand side is in
  $\Ltwo(0,\tf;\Ltwo(\Omega;\Ltwo(\Gamma_Y)))$ we conclude that $u^1 \in
  \Ltwo(0,\tf;\Hper(\Gamma_Y))$.

  \paragraph{4. Boundary conditions.}  Let us take a function $\ww\in C^1(\Omega)$
  such that $\ww|_{\partial\Omega \backslash \Gamma_D} =0$, \ie~it vanishes on
  the complement of the boundary of the Dirichlet boundary.
  As the function
  $\xb \mapsto \delta u^\delta(t, \xb)\ww(\xb)$ is in $H^1_0(\Omegad)$ it follows from
  Lemma~\ref{LemaPhiIbyP} for circuits introduced in the previous step that
  \begin{equation*}
    \delta\int_{\Omegad} \partial_{\Gamma} u^\delta(t, \xb) \ww(\xb) \chi_{C_i}\left(\frac{\xb}{\delta}\right)
    \,\mathrm{d}s(\xb) = - \delta \int_{\Omegad} u^\delta(t, \xb)
    \partial_\Gamma \ww(\xb) \chi_{C_i}\left(\frac{\xb}{\delta}\right)
    \,\mathrm{d}s(\xb), \quad i=1,2.
  \end{equation*}
  Multiplying the equation by $\phi\in C^\infty([0,\tf])$ and integrating
  over $[0,\tf]$ we obtain in the limit $\delta\to0$ using~\eqref{eq:form_of_limits:2} and~\eqref{eq:deriv_on_mesh:macro}
  \begin{equation}
  \aligned
    &\int_0^{\tf} \phi(t)\int_{\Omega}\int_{\Gamma_Y} (\nabla_\xb u^0 (t,\xb)
    \cdot \tb(\yb)
    + \partial_{\Gamma,\yb} u^1 (t,\xb,\yb))\ww(\xb) \chi_{C_i}(\yb)
    \,\mathrm{d}s(\yb) \,\mathrm{d}\xb \, \mathrm{d}t\\
    &\qquad = -
    \int_0^{\tf} \phi(t) \int_{\Omega}\int_{\Gamma_Y} u^0(t,\xb)
    \nabla\ww(\xb) \cdot \tb(\yb) \chi_{C_i}(\yb) \,\mathrm{d}s(\yb) \,\mathrm{d}\xb\, \mathrm{d}t.
    \endaligned
    \label{eq:integral_v0_v1_particular_psi}
  \end{equation}
  Therefore, using $\tb_{C_i}=\eb_i$, $i=1,2$, \eqref{eq:integral_v0_v1_particular_psi}
  becomes
  $$
  \aligned
    &\int_0^{\tf} \phi(t) \int_{\Omega} \left(\nabla u^0(t,\xb) \cdot \eb_i +
      \int_{\Gamma_Y}\partial_{\Gamma,\yb} u^1 (t,\xb,\yb) \chi_{C_i}(\yb)
      \,\mathrm{d} s(\yb)\right)\ww(\xb) \,\mathrm{d} \xb \,
    \mathrm{d}t \\
    &\qquad = -
     \int_0^{\tf} \phi(t)\int_{\Omega} u^0(t,\xb) \nabla \ww(\xb)\cdot \eb_i \,\mathrm{d}\xb \, \mathrm{d}t, \quad i=1,2.
     \endaligned
  $$
  Since $u^1$ is $\Gamma_Y$ periodic, using
    Lemma~\ref{LemaIntegralFirstKind} with $\cG = \Gamma_Y'$, it holds
  \begin{equation*}
    \int_{\Gamma_Y}\partial_{\Gamma,\yb} u^1 (\xb,\yb) \chi_{C_i}(\yb) \,\mathrm{d}s(\yb) = 0, \quad i=1,2.
  \end{equation*}
  Thus, using arbitrariness of $\phi\in C^\infty([0,T])$ we obtain that
  \begin{equation*}
    \eb_i \cdot \int_{\Omega} \nabla u^0 (t,\xb) \ww(\xb)
    \,\mathrm{d}\xb = - \eb_i \cdot \int_{\Omega} u^0(t,\xb) \nabla
    \ww(\xb) \,\mathrm{d}\xb.
  \end{equation*}
  After partial integration in the right hand side it follows that
  \begin{equation*}
    \eb_i \cdot  \int_{\Gamma_D} u^0(t,\xb) \ww(\xb) \nb(\xb)
    \,\mathrm{d}s(\xb) = 0,
  \end{equation*}
  where $\nb$ is the unit outer normal on $\partial \Omega$. Due to the
  arbitrariness of the function $\ww$, we deduce that the function
  $(t,x) \mapsto \eb_i \cdot \nb(\xb) u^0(t,\xb)$ vanishes on
  $(0,\tf) \times \Gamma_D$.
  This is only possible for $i=1,2$ if $u^0$ vanishes on $\Gamma_D$.

  \paragraph{5. Initial condition.} Let us take that
  $u^\delta\big|_{t=0}$ mesh two-scale converges to the limit
  $u_{\init} \in \Ltwo(0,\tf;\Loper)$. Let us take any
  $\psi \in C^\infty_0(\Omega, \Hper(\Gamma_Y)) $ and
  $\phi \in C^{\infty}(0,\tf)$ such that $\phi(0)=1, \phi(\tf)=0$. Then
  partial integration in $t$ gives us
  \begin{multline*}
      \delta \int_{0}^{\tf} \int_{\Omegad} \partial_t{u}^\delta \left(t,\xb\right) \psi\left(\xb,\frac{\xb}{\delta}\right) \phi(t) \,\mathrm{d}s(\xb) \,\mathrm{d} t \\
      = - \delta \int_{0}^{\tf} \int_{\Omegad} u^\delta \left(t,\xb\right) \psi\left(\xb,\frac{\xb}{\delta}\right) \phi'(t) \,\mathrm{d}s(\xb) \mathrm d t
      - \delta \int_{\Omegad} u^\delta \left(0,\xb\right) \psi\left(\xb,\frac{\xb}{\delta}\right)
      \,\mathrm{d}s(\xb) .
  \end{multline*}
  Now, in the limit $\delta\to 0$ we obtain
  \begin{multline*}
      \int_{0}^{\tf} \int_{\Omega} \int_{\Gamma_Y} \partial_t u^0 \left(t,\xb\right) \psi\left(\xb,\yb\right) \phi(t) \,\mathrm{d}s(\yb) \mathrm d \xb \mathrm d t  \\
      = - \int_{0}^{\tf} \int_{\Omega}\int_{\Gamma_Y} u^0 \left(t,\xb\right) \psi\left(\xb,\yb\right) \phi'(t) \,\mathrm{d}s(\yb)\mathrm d \xb \mathrm d t
      - \int_{\Omega}\int_{\Gamma_Y} u_{\init} \left(\xb\right) \psi\left(\xb,\yb\right)
      \, \mathrm{d}s(\yb)\mathrm d \xb .
  \end{multline*}
  Integrating the left hand side by parts we find
  \begin{multline*}
      \int_{0}^{\tf} \int_{\Omega} \int_{\Gamma_Y} \partial_t u^0 \left(t,\xb\right) \psi\left(\xb,\yb\right) \phi(t) \,\mathrm{d}s(\yb) \mathrm d \xb \mathrm d t = \\
      - \int_{0}^{\tf} \int_{\Omega}\int_{\Gamma_Y} u^0 \left(t,\xb\right) \psi\left(\xb,\yb\right) \phi'(t) \,\mathrm{d}s(\yb)\mathrm d \xb \mathrm d t
      - \int_{\Omega}\int_{\Gamma_Y} u^0 \left(0,\xb\right) \psi\left(\xb,\yb\right)
      \, \mathrm{d}s(\yb)\mathrm d \xb .
  \end{multline*}
  By substracting last two results we obtain
  \begin{equation*}
      \int_{\Omega}\int_{\Gamma_Y} \left( u_{\init} (\xb)- u^0 \left(0,\xb\right) \right) \psi\left(\xb,\yb\right)
      \, \mathrm{d}s(\yb)\mathrm d \xb = 0.
  \end{equation*}
  Arbitrarines of $\psi$ implies $u_\textbf{\init} = u^0\big|_{t=0}$.


 \paragraph{6. Form of the corrector term $u^1$.}
 We consider a test function $v^\delta(\xb) = v(\xb)\theta(\frac{\xb}{\delta})$ as product of a slow varying function $v \in \cH$
 and a fast varying function $\theta \in \Hper(\Gamma_Y)$.
Multiplying~\eqref{deltaP:1} by $\delta^2 \phi$ for $\phi\in C^\infty([0,\tf])$,  integrating over $[0,\tf]$ and using this particular test function leads to
\begin{multline}\label{deltaP1}
  \delta^2 \int_0^{\tf} \phi(t) \int_{\Omegad} \partial_t{u}^\delta(t,\xb)
  \theta\left(\frac{\xb}{\delta}\right) v(\xb)
  \,\mathrm{d}s(\xb)  \, \mathrm{d}t \\
  + \delta \int_0^{\tf} \phi (t) \int_{\Omegad} a(\xb) \partial_{\Gamma}
  u^\delta(t,\xb) \left( \partial_{\Gamma,\yb}
    \theta\left(\frac{\xb}{\delta}\right) v(\xb)+ \delta
    \theta\left(\frac{\xb}{\delta}\right) \nabla v
    (\xb) \cdot \tb \left(\frac{\xb}{\delta}\right) \right) \,\mathrm{d}s(\xb) \, \mathrm{d}t\\ = \delta^2
  \int_0^{\tf} \phi(t)\int_{\Omegad} f\left(t,\xb,\frac{\xb}{\delta}\right)
  \theta\left(\frac{\xb}{\delta}\right) v(\xb) \,\mathrm{d}s(\xb)
  \, \mathrm{d}t.
\end{multline}
Under the hypothesis of Lemma~\ref{lema:bound_data} and using the
Cauchy-Schwartz inequality, the right-hand side of~\eqref{deltaP1}
tends to $0$ as $\delta$ tends to $0$, independently of the choice of
$v$, $\theta$ and $\phi$.  Furthermore the first term on the left hand
side also tends to zero by the \emph{a priori} estimates from
Lemma~\ref{lapriori}.  The second term in the left hand side of
(\ref{deltaP1}) is split in two.  We use the Cauchy-Schwarz inequality
and the \emph{a priori} estimate from Lemma~\ref{lapriori} for
$\partial_{\Gamma} u^\delta$ to conclude that the term with additional
$\delta$ tends to zero as $\delta \to 0$.  We use then the the mesh two-scale
convergence of $\partial_{\Gamma} u^\delta$ to $\partial_{\Gamma} u^0
+ \partial_{\Gamma,\yb} u^1$ for the test function
\begin{equation*}
  \psi:(\xb,\yb) \mapsto a(\xb) v(\xb) \partial_{\Gamma} \theta(\yb)
  \in \Ltwo(\Omega ; \Ltwo(\Gamma_Y) ),
\end{equation*}
to take the limit in the remaining term on the left hand side of
\eqref{deltaP1}. What remains in the limit is
\begin{equation}
  \label{deltaP2}
  \int_0^{\tf} \phi(t) \int_{\Omega} \int_{\Gamma_Y} a(\xb) \left(
    \partial_{\Gamma} u^0(t,\xb) + \partial_{\Gamma,\yb}
    u^1(t,\xb,\yb) \right) v(\xb) \partial_{\Gamma}
  \theta(\yb) \,\mathrm{d}s(\yb)
  \, \mathrm{d}\xb\, \mathrm{d}t =0.
\end{equation}
Since $v$ is an arbitrary function of $\cH$ and $\phi$ arbitrary in $C^\infty([0,\tf])$, we obtain that
\begin{equation*}
  \int_{\Gamma_Y} \left( \partial_{\Gamma} u^0(t,\xb)
    + \partial_{\Gamma,\yb} u^1(t,\xb,\yb) \right) \partial_{\Gamma}
  \theta(\yb) \,\mathrm{d}s(\yb) =0.
\end{equation*}
Using~\eqref{eq:deriv_on_mesh:macro} this gives the unit pattern problem for $u^1$:
\begin{equation}\label{deltaP3}
  \int_{\Gamma_Y} \partial_{\Gamma,\yb} u^1(t,\xb, \yb)
  \partial_{\Gamma} \psi(\yb) \,\mathrm{d}s(\yb) = -
  \nabla_{\xb} u^0(t,\xb) \cdot \int_{\Gamma_Y}  \tb(\xb)
  \partial_{\Gamma} \psi(\yb) \,\mathrm{d}s(\yb).
\end{equation}
which has by Lemma~\ref{lcanon} a unique solution up to an additive constant in $\yb$ (note that $t$ and $\xb$ are parameters here).
With this, we can express the solution of \eqref{deltaP3}
using the solution of the canonical problems~\eqref{canon} as
$u^1 (t,\xb, \yb)
  = \nabla_\xb u^0 (t,\xb) \cdot
  \phib(\yb) + \tilde{u}^1(t,\xb) $
with a function $\tilde{u}^1$
independent of the fast variable $\yb$ and
\begin{equation}\label{u1}
  \partial_{\Gamma,\yb} u^1(t,\xb, \yb) = \nabla_\xb u^0(t,\xb) \cdot \nabla_{\Gamma} \phib(\yb).
\end{equation}

 \paragraph{7. Variational formulation for the unique limit solution $u^0$.}

 Next we take a test function
$v^\delta(t,\xb) = \delta v(\xb)\phi(t)$, where $v \in \Hone(\Omega)$ is
independent of the fast variable $\yb$ and $\phi \in C^\infty([0,\tf])$,
in (\ref{deltaP}). We obtain
\begin{equation*}
  \aligned
  &\delta \int_0^{\tf} \phi(t) \int_{\Omegad} \partial_t{u}^\delta(t,\xb) v(\xb)
  \,\mathrm{d}s(\xb)  \, \mathrm{d}t
  + \delta \int_0^{\tf} \phi(t)\int_{\Omegad} a(\xb) \partial_{\Gamma} u^\delta(\xb) \partial_{\Gamma} v(\xb)
  \,\mathrm{d} s(\xb) \,\mathrm{d}t \\
  &\qquad = \delta \int_0^{\tf} \phi(t) \int_{\Omegad} f(t,\xb) v(\xb)
  \,\mathrm{d} s(\xb) \,\mathrm{d}t.
  \endaligned
\end{equation*}
Then using the mesh two-scale convergence of $\partial_{\Gamma} u^\delta$
  (respectively $\partial_t u^\delta$) to
  $\partial_{\Gamma} u^0 + \partial_{\Gamma,\yb} u^1$ (resp.
  $\partial_t u^0$), we take the limit when $\delta$ tends to zero and obtain
\begin{equation*}
\aligned
&\int_0^{\tf} \phi(t) \int_{\Omega} \int_{\Gamma_Y}\partial_t{u}^0(t,\xb) v(\xb) \, \mathrm{d}s(\yb)
  \,\mathrm{d}s(\xb)  \, \mathrm{d}t \\
&\qquad\quad   +  \int_0^{\tf} \phi(t) \int_{\Omega} \int_{\Gamma_Y} a(\xb) \left( \nabla
    u^0(\xb) \cdot \tb(\yb) + \partial_{\Gamma,\yb} u^1(\xb, \yb)\right) \nabla
  v(\xb) \cdot \tb(\yb)
  (\xb) \,\mathrm{d} s(\yb) \,\mathrm{d} \xb \, \mathrm{d}t\\
&\qquad   = |\Gamma_Y| \int_0^{\tf} \phi(t)\int_\Omega
  f_\homg(t,\xb) v(\xb)  \,\mathrm{d} \xb\, \mathrm{d}t,
  \endaligned
\end{equation*}
where $f_\homg$ was defined in~\eqref{eq:f_hom}.
Using arbitrariness of $\phi$ in $C^\infty([0,\tf])$ we
obtain that for almost every $t\in [0,\tf]$ we have
\begin{equation*}
  \aligned
  &
  \int_{\Omega}  \partial_t{u}^0(t,\xb) v(\xb) \,\mathrm{d}s(\xb)
  + \frac{1}{|\Gamma_Y|} \int_{\Omega} \int_{\Gamma_Y} a(\xb) \left( \nabla
    u^0(t,\xb) \cdot \tb(\yb) + \partial_{\Gamma,\yb} u^1(t,\xb,
    \yb)\right) \nabla v
  (\xb) \cdot \tb(\yb)\,\mathrm{d} s(\yb) \,\mathrm{d} \xb \\
  &\qquad = \int_\Omega
  f_\homg(t,\xb) v(\xb)  \,\mathrm{d} \xb,
  \endaligned
\end{equation*}
Using the solution representation for $\partial_{\Gamma,\yb} u^1$
from~\eqref{u1},
$\partial_{\Gamma,\yb} u^1(t,\xb, \yb) = \nabla u^0(t,\xb) \cdot
\nabla_{\Gamma} \phib(\yb)$, we obtain
\begin{equation}\label{eq:last}
  \aligned
 &  \int_{\Omega}  \partial_t{u}^0(t,\xb) v(\xb) \,\mathrm{d}s(\xb)
  + \frac{1}{|\Gamma_Y|} \int_\Omega a(\xb) \bigg( \int_{\Gamma_Y}
  \tb(\yb) \left(  \tb(\yb) + \partial_{\Gamma} \phib(\yb)\right)^T
  \,\mathrm{d} s(\yb) \bigg)
  \nabla u^0(t,\xb) \cdot \nabla v(\xb) \, \mathrm{d} \xb\\
 &\qquad  =
  \int_\Omega f_\homg(t,\xb) v(\xb) \,\mathrm{d}\xb.
  \endaligned
\end{equation}
Testing the canonical problem \eqref{canon} by the two functions $\psi=\phi_i$, $i=1,2$ we obtain that
  \begin{equation*}
    \int_{\Gamma_Y} \partial_{\Gamma} \phib(\yb)
    (\tb(\yb) + \partial_{\Gamma} \phib(\yb))^T \mathrm{d}s(\yb)=0.
  \end{equation*}
Thus the matrix  $\Abbh$, defined in~\eqref{eq:A_hom}, can be also written as
$$
\Abbh = \int_{\Gamma_Y}
  \tb(\yb) \left(  \tb(\yb) + \partial_{\Gamma} \phib(\yb)\right)^T  \,\mathrm{d} s(\yb)
$$
and, hence, in view of~\eqref{eq:last} the two-scale limit $u^0$ satisfies~\eqref{P:PDE}.
Since $\Abbh$ is positive definite by Lemma~\ref{lA}
similarly to Lemma~\ref{ldeltaexist}
existence and uniqueness of the limit problem \eqref{P} follow.

As usual, uniqueness of the solution of the
limit problem then also implies the mesh two-scale convergences for the whole
family $(u^\delta)_\delta$ and their derivatives in \eqref{conv1}.
\end{proof}

In Theorem~\ref{theo:convergence} we assumed the stability estimate of the solutions $u^\delta$ of~\eqref{eq:Heat_equation} stated in Theorem~\ref{theo:wellposedness}. Repeating the first five steps in the proof of Theorem~\ref{theo:convergence} for any sequence of function $(v^\delta)_\delta$ satisfying the same stability estimate we can state similarly the following statement.
\begin{corollary}
  Let $\Gamma_Y$ be connected and satisfies \eqref{opposite}.
  Let the sequence $(v^\delta)_\delta \in \Ltwo(0,\tf;\Hone(\Omegad))$ with $\delta\to0$ be such that
  $\partial_t{v}^\delta \in \Ltwo(0,\tf;\Ltwo(\Omegad))$ for any $\delta > 0$ be
  such that there is $C>0$ such that for all $\delta>0$ one has
  \begin{equation*}
      \sqrt{\delta} \|v^\delta\|_{\Ltwo(0,\tf;\Ltwo(\Omegad))} \leqslant C,\qquad
      \sqrt{\delta} \|\partial v^\delta\|_{\Ltwo(0,\tf;\Ltwo(\Omegad))} \leqslant C,\qquad
      \sqrt{\delta}
      \|\partial_t{v}^\delta\|_{\Ltwo(0,\tf;\Ltwo(\Omegad))}  \leqslant
      C.
  \end{equation*}
  Then there is a sub-sequence of $(v^\delta)_\delta$ (still denoted
  by $v^\delta$) and functions $v^0 \in \Ltwo(0,\tf;\Hone(\Omega))$ and
  $v^1 \in \Ltwo(0,\tf;\Hper(\Gamma_Y))$ with
  $\partial_t{v}^0 \in \Ltwo(0,\tf;\Ltwo(\Omega))$ such that
  \begin{equation*}
                     v^\delta \xrightharpoonup{\mathrm{m2s}} v^0, \qquad
     \partial_\Gamma v^\delta \xrightharpoonup{\mathrm{m2s}} \partial_{\Gamma,\yb} v^1 + \nabla v^0 \cdot \tb,  \qquad
     \partial_t v^\delta \xrightharpoonup{\mathrm{m2s}} \partial_t v^0\ .
  \end{equation*}
  Additionally, if $(v^\delta)_\delta \in \Ltwo(0,\tf;\cH^\delta)$, then the limit $v^0$
  belongs to $\Ltwo(0,\tf;\cH)$. Also, if $v^\delta\big|_{t=0}$ mesh two-scale converges, its limit is $v^0\big|_{t=0}$.
\end{corollary}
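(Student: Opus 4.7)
The plan is to observe that the first five steps of the proof of Theorem~\ref{theo:convergence} use only the uniform bounds on $\sqrt{\delta}\|v^\delta\|$, $\sqrt{\delta}\|\partial_\Gamma v^\delta\|$, $\sqrt{\delta}\|\partial_t v^\delta\|$, together with Lemma~\ref{LemaPhiIbyP}, Lemma~\ref{LemaPhiConst}, Lemma~\ref{LemaIntegralFirstKind}, and Lemma~\ref{RemarkAntider}. The PDE satisfied by $u^\delta$ only enters in steps 6 and 7, where the homogenized equation and the corrector representation are derived; these are not needed for the corollary. So the entire argument consists of a careful transcription.

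First I would invoke Theorem~\ref{tcomp1} three times, once to each of the sequences $(v^\delta)$, $(\partial_\Gamma v^\delta)$, $(\partial_t v^\delta)$, which are bounded in $\Ltwo(0,\tf;\Ltwo(\Omegad))$ by the hypothesis. Passing to a common subsequence (still denoted $\delta$) yields limits $v^0, z^0, w^0 \in \Ltwo(0,\tf;\Loper)$ with $v^\delta \xrightharpoonup{\mathrm{m2s}} v^0$, $\partial_\Gamma v^\delta \xrightharpoonup{\mathrm{m2s}} z^0$, $\partial_t v^\delta \xrightharpoonup{\mathrm{m2s}} w^0$.

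Next, I would show $v^0$ is independent of $\yb$ exactly as in steps 1 and 2 of the proof of Theorem~\ref{theo:convergence}: first under the assumption that $\Gamma_Y$ is strongly connected, by testing with $\xb \mapsto v^\delta(t,\xb)\ww(\xb)\theta(\xb/\delta)\phi(t)$ for $\ww \in C^\infty_c(\Omega)$, $\theta \in \Hper(\Gamma_Y)$, $\phi \in C^\infty([0,\tf])$ and an arbitrary circuit $C \in \cC(\Gamma'_{Y,\#})$, applying Lemma~\ref{LemaPhiIbyP} with a $\delta^2$ scaling so that the first two terms of~\eqref{eq:vd_with_w_and_phi} vanish in the limit, and then concluding by Lemma~\ref{LemaPhiConst}. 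The extension to general connected $\Gamma_Y$ proceeds by the doubling construction $\widetilde\Gamma_Y = \Gamma_Y \cup \Gamma_{Y,-}$ and the definition of $\widetilde v^\delta$ on both copies, which inherits the same three uniform bounds (up to a factor of $2$), so that the strongly-connected case applies to $\widetilde v^\delta$ and, upon restriction, delivers a $\yb$-independent $v^0$.

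Then, to produce the form of the limits, I would apply step 3 of the original proof verbatim: integration by parts in time against an admissible $\psi$ with $\psi(0,\cdot,\cdot)=\psi(\tf,\cdot,\cdot)=0$ yields $w^0 = \partial_t v^0$; and testing with $\xb \mapsto v^\delta(t,\xb)\ww(\xb)$ (again via Lemma~\ref{LemaPhiIbyP}) for arbitrary circuits $C$ yields the identity~\eqref{eq:form_of_limits:1}. The two circuits $C_1, C_2$ provided by the connectedness assumption~\eqref{opposite} give $\tb_{C_1} = \eb_1$ and $\tb_{C_2} = \eb_2$ via Lemma~\ref{LemaIntegralFirstKind}, which forces existence of $\nabla v^0 \in \Ltwo(0,\tf;\Ltwo(\Omega))$, hence $v^0 \in \Ltwo(0,\tf;\Hone(\Omega))$, and the circuit-integral identity reduces by Lemma~\ref{RemarkAntider} to the existence of $v^1 \in \Ltwo(0,\tf;\Hper(\Gamma_Y))$ with $z^0 = \nabla v^0 \cdot \tb + \partial_{\Gamma,\yb} v^1$. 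The boundary condition $v^0 = 0$ on $\Gamma_D$, under the assumption $v^\delta \in \cH^\delta$, is obtained from step 4: test with $\xb \mapsto \delta v^\delta(t,\xb)\ww(\xb)$ for $\ww \in C^1(\Omega)$ vanishing on $\partial\Omega\setminus\Gamma_D$, apply Lemma~\ref{LemaPhiIbyP} on $C_1$ and $C_2$, integrate against $\phi \in C^\infty([0,\tf])$, use the periodicity of $v^1$ to drop the $\partial_{\Gamma,\yb} v^1$ term via Lemma~\ref{LemaIntegralFirstKind}, and deduce from arbitrariness of $\ww$ and $\nb\cdot\eb_i$, $i=1,2$, spanning $\ZR^2$, that $v^0|_{\Gamma_D} = 0$. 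Finally, the initial condition statement follows from step 5 by time-integration by parts against $\psi(\xb, \yb)\phi(t)$ with $\phi(0)=1, \phi(\tf)=0$.

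The main (mild) obstacle is making sure that every step of the original proof that formally exploits $u^\delta$ in fact uses only the three uniform bounds and membership in $\cH^\delta$; once this check is done, the proof is a direct reprint of steps 1--5 with $u^\delta$ replaced by $v^\delta$ throughout.
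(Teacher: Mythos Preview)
Your proposal is correct and matches the paper's approach exactly: the paper states this corollary immediately after noting that the first five steps of the proof of Theorem~\ref{theo:convergence} use only the uniform stability bounds, not the equation, and that repeating them for any sequence $(v^\delta)_\delta$ satisfying the same estimates yields the result.
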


\subsection{Proof of Theorem~\ref{theo:convergence_in_norm}: convergence in norm to homogenized model}
\label{sec:homogenized_model:convergence_in_norm}


\begin{proof}[Proof of Theorem~\ref{theo:convergence_in_norm}]
By assumption on  $f$ we find that  $f_\homg \in \Ltwo(0,\tf;\Htwo(\Omega))$ and $\partial_t f_\homg \in \Ltwo(0,\tf;\Ltwo(\Omega))$
and by assumption on $ u_\init$ using~\cite[Chap. 7, Theorem 6]{Evans} we conclude
\begin{equation*}
  u^0 \in L^2(0,\tf;\Hsp{4}(\Omega)), \qquad \partial_t u^0 \in L^2(0,\tf;\Htwo(\Omega)).
\end{equation*}
Therefore $ \nabla u^0 \in L^2(0,\tf;H^3(\Omega))$ and thus
\begin{equation*}
  (t,\xb,\yb) \mapsto \nabla_\xb
      u^1(t,\xb,\yb) \cdot \tb(\yb) \in
  \Ltwo(0,\tf;\Htwo(\Omega;\Ltwo(\Gamma_Y))), \qquad
  \partial_{\Gamma,\yb}u^1 \in L^2(0,\tf;\Hsp{3}(\Omega;L^2(\Gamma_Y))).
\end{equation*}
Moreover $f\in \Ltwo(0,\tf;\Ltwo (\Gamma_Y;C(\Omega)))$, so $f$ is an admissible test function.

Therefore for any $\delta$ the integrals in the following $\delta$ family
and all the terms in the following computation are well defined
\begin{equation}
  \label{eq:Lambda_delta}
  \begin{aligned}
    \Lambda^\delta (t) : =& \frac{1}{2}\delta \int_{\Omegad}
    \left( u^\delta(t,\xb) - u^0(t,\xb)\right)^2 \,
    \mathrm{d}s(\xb)\\
    &+ \delta \int_0^t \int_{\Omegad} a(\xb)
    \partial_{\Gamma}\left(u^\delta(\tau,\xb)-u^0(\tau,\xb) - \delta
    u^1\left(\tau,\xb,\frac{\xb}{\delta}\right)\right)^2 \, \mathrm{d}s(\xb) \mathrm{d}\tau\ .
  \end{aligned}
\end{equation}

  Applying the Newton-Leibniz theorem in the first term of
  $\Lambda^\delta$ leads to
  \begin{equation}\label{eq:Lambda_delta1}
    \begin{aligned}
      \Lambda^\delta (t) =& \delta \int_0^t \int_{\Omegad} \left(
        \partial_t{u}^\delta(\tau,\xb) - \partial_t{u}^0(\tau,\xb)\right)
      \left( u^\delta(\tau,\xb) - u^0(\tau,\xb)\right) \,
      \mathrm{d}s(\xb)\mathrm{d}\tau\\
      &+ \delta \int_0^t \int_{\Omegad} a(\xb)
      \partial_{\Gamma}(u^\delta(\tau,\xb)-u^0(\tau,\xb) - \delta
      u^1\left(\tau,\xb,\frac{\xb}{\delta}\right))^2 \, \mathrm{d}s(\xb) \mathrm{d}\tau.
    \end{aligned}
  \end{equation}
  Now we use the equation~(\ref{deltaP}) for the particular test
  function $v(\xb) := u^\delta(t,\xb) - u^0(t,\xb) \in
  \Hone(\Omegad)$ to resolve the quadratic terms in $u^\delta$, so
  that $\Lambda^\delta(t)$ becomes
  \begin{equation*}
    \begin{aligned}
      \Lambda^\delta (t) =&  -\delta \int_0^t \int_{\Omegad}
      \partial_t{u}^0(\tau,\xb) \left( u^\delta(\tau,\xb) -
        u^0(\tau,\xb)\right) \, \mathrm{d}s(\xb)\\
      &+ \delta \int_0^t \int_{\Omegad} a(\xb) \partial_{\Gamma} (-
      u^0(\tau,\xb) - \delta u^1\left(\tau,\xb,\frac{\xb}{\delta}\right))
      \partial_{\Gamma}(u^\delta(\tau,\xb)-u^0(\tau,\xb) - \delta
      u^1\left(\tau,\xb,\frac{\xb}{\delta}\right)) \, \mathrm{d}s(\xb) \mathrm{d}\tau\\
      &+ \delta \int_0^t \int_{\Omegad} a(\xb)
      \partial_{\Gamma}u^\delta(\tau,\xb) \partial_{\Gamma}( - \delta
      u^1\left(\tau,\xb,\frac{\xb}{\delta}\right)) \, \mathrm{d}s(\xb) \mathrm{d}\tau\\
      &- \delta \int_0^t \int_{\Omegad}
      f\left(\tau,\xb,\frac{\xb}{\delta}\right)
      (u^\delta(\tau,\xb)-u^0(\tau,\xb)) \, \mathrm{d}s(\xb) \mathrm{d}\tau .
    \end{aligned}
  \end{equation*}
  We use then the weak convergence stated by (\ref{conv1}) and the fact
  that $f$ is admissible.  Therefore $\Lambda^\delta(t)$ converges to
  the limit functional $\Lambda(t)$ defined by
  \begin{equation*}
    \begin{aligned}
      \Lambda(t) & := -\int_0^t \int_{\Omega} \int_{\Gamma_Y} a(\xb) (\nabla u^0(\tau,\xb) \cdot \tb(\yb) + \partial_{\Gamma,y} u^1(\tau,\xb,\yb))
      \partial_{\Gamma,y} u^1(\tau,\xb,\yb) \, \mathrm{d}\yb \,
      \mathrm{d}\xb \, \mathrm{d}\tau \\
      & = - \int_0^t \int_{\Omega} \bigg( \int_{\Gamma_Y}
      \partial_{\Gamma,\yb} \phib(\yb) \big( \tb(\yb) +
      \partial_{\Gamma,\yb} \phib(\yb) \big)^T \,
      \mathrm{d}s(\yb) \bigg) \nabla u^0(\tau,\xb) \cdot \nabla
      u^0(\tau,\xb) \,\mathrm{d}\xb \,\mathrm{d}\tau,\\
      & = - |\Gamma_Y|\int_0^t \int_{\Omega} \Abbh \nabla u^0(\tau,\xb) \cdot \nabla
      u^0(\tau,\xb) \,\mathrm{d}\xb \,\mathrm{d}\tau.
    \end{aligned}
  \end{equation*}
  The homogenized tensor $\Abbh$ is positive definite by Lemma~\ref{lA} and thus $\Lambda(t)\leq 0$.
  However, $\Lambda^\delta(t)\geq 0$ which is both possible only if  $\Lambda(t)=0$.
  Taking $t=\tf$ leads to
  \begin{equation*}
    \sqrt{\delta}\|\partial_{\Gamma} (u^\delta -
    u^0-\delta u^1)\|_{\Ltwo(0,\tf;\Ltwo(\Omegad))} \to 0.
  \end{equation*}
  We also deduce pointwise convergence
  \begin{equation*}
    \sqrt{\delta} \|u^\delta(t,\cdot)-u^0(t,\cdot) \|_{\Ltwo(\Omegad)}
    \to 0,
  \end{equation*}
  for any $t \in (0,\tf)$. Let us denote by $\Lambda^1_\delta (t)$
  and $\Lambda^\delta_2(t)$ the first and the second term,
  respectively, in the right hand side of
  \eqref{eq:Lambda_delta1}. Thus
  $\Lambda^\delta(t) = \Lambda^\delta_1(t) +
  \Lambda^\delta_2(t)$. For the second term we have
  $\sup_{t\in[0,\tf]} \Lambda_{2}^\delta = \Lambda_{2}^\delta(\tf)$ since the
  function under integral sign is positive. Thus pointwise
  convergence of $\Lambda^\delta$ implies convergence in $L^\infty$. For
  $\Lambda^\delta_1$ we show that it is equicontinuous.
  \begin{equation*}
    \begin{aligned}
      &\left|\Lambda^\delta_1(t+\Delta t) -
        \Lambda^\delta_1(t)\right| \leq  \delta \int_t^{t+\Delta t}
      \int_\Omegad \left|\partial_\tau
        (u^\delta(\tau,\xb)-u^0(\tau,\xb))
        (u^\delta(\tau,\xb)-u^0(\tau,\xb))\right|  \,
      \mathrm{d}s(\xb) \, \mathrm{d}\tau\\
      &\ \leq \left(\delta \int_t^{t+\Delta t} \int_\Omegad
        \left(\partial_\tau
          (u^\delta(\tau,\xb)-u^0(\tau,\xb))\right)^2 \,
        \mathrm{d}s(\xb) \, \mathrm{d}\tau\right)^{\frac{1}{2}} \\
        & \qquad \left(
        \delta \int_t^{t+\Delta t} \int_\Omegad
        (u^\delta(\tau,\xb)-u^0(\tau,\xb))^2 \,
        \mathrm{d}s(\xb) \, \mathrm{d}\tau\right)^{\frac{1}{2}}.
    \end{aligned}
  \end{equation*}
  From the \textit{a priori} estimates in Lemma~\ref{lapriori} we
  have that the first term is uniformly bounded with respect to
  $\delta$. For the second term we use the Newton-Leibnitz formula
  and obtain
  \begin{equation*}
    \begin{aligned}
      &\left|\Lambda^\delta_1(t+\Delta t) -
        \Lambda^\delta_1(t)\right| \leq C \left( \delta
        \int_t^{t+\Delta t} \int_0^\tau \partial_s  \int_\Omegad
        (u^\delta(s,\xb)-u^0(s,\xb))^2  \, \mathrm{d}s(\xb)\,
        ds \, \mathrm{d}\tau\right)^{\frac{1}{2}}\\
      &\ \leq C \left( 2 \delta
      \int_t^{t+\Delta t} \int_0^\tau
        \int_\Omegad \partial_{\zeta} (u^\delta(\zeta,\xb)-u^0(\zeta,\xb))
        (u^\delta(s,\xb)-u^0(\zeta,\xb))  \, \mathrm{d}s(\xb)\, d\zeta        \, \mathrm{d}\tau\right)^{\frac{1}{2}}\\
      &\ \leq C \left( 2 \int_t^{t+\Delta t} \Lambda^\delta_1(\tau)
        \, \mathrm{d}\tau\right)^{\frac{1}{2}}.
    \end{aligned}
  \end{equation*}
  Since $\Lambda^\delta_1$ is uniformly bounded we have that
  \begin{equation*}
    \left|\Lambda^\delta_1(t+\Delta t) - \Lambda^\delta_1(t)\right| \leq C (\Delta t)^{\frac{1}{2}}.
  \end{equation*}
  Thus $\Lambda^\delta_1$ is equicontinuous and since it pointwisely converges to 0 we get
  \begin{equation*}
    \| \Lambda^\delta\|_{L^\infty(0,\tf)} \to 0,
  \end{equation*}
  as $\delta$ tends to $0$. Thus~(\ref{eq:convergence_norms}) is
  proved.
\end{proof}


\section{Computation of the homogenized tensor $\Abbh$}
\label{sec5}
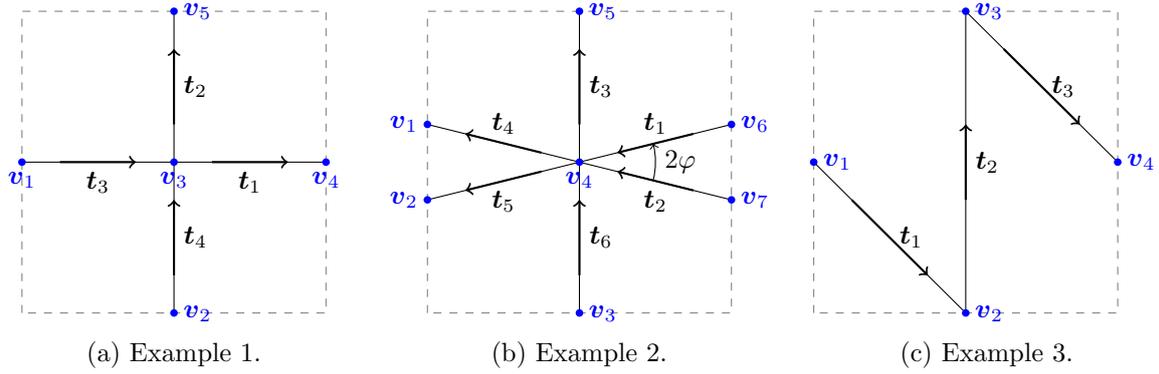
\begin{figure}[!h]

  \begin{tabular}{ccc}
  \begin{tikzpicture}
    \draw[dashed, black!50!white] (-2,-2) rectangle (2,2);
    \draw (-2,0) -- (2,0);
    \draw (0,-2) -- (0,2);

    \draw[thick,->] (-1.5,0) -- (-0.5,0) node [pos=0.5,below] {$\tb_3$};
    \draw[thick,->] (0.5,0) -- (1.5,0) node [pos=0.5,below] {$\tb_1$};
    \draw[thick,->] (0,-1.5) -- (0,-0.5) node [pos=0.5,right] {$\tb_4$};
    \draw[thick,->] (0,0.5) -- (0,1.5) node [pos=0.5,right] {$\tb_2$};

    \fill[blue] (2,0) circle (0.05) node [below] {$\vb_4$};
    \fill[blue] (0,2) circle (0.05) node [right] {$\vb_5$};
    \fill[blue] (-2,0) circle (0.05) node [below] {$\vb_1$};
    \fill[blue] (0,-2) circle (0.05) node [right] {$\vb_2$};
    \fill[blue] (0,0) circle (0.05) node [below] {$\vb_3$};

  \end{tikzpicture}
  &
  \begin{tikzpicture}
    \draw[dashed, black!50!white] (-2,-2) rectangle (2,2);

    \draw (0,-2) -- (0,2);
    \draw (-2,-0.5) -- (2,0.5);
    \draw (-2,0.5) -- (2,-0.5);

    \draw[thick,->] (1.5,0.375) -- (0.5,0.125) node [pos=0.5,above] {$\tb_1$};
    \draw[thick,->] (0,0.5) -- (0,1.5) node [pos=0.5,right] {$\tb_3$};
    \draw[thick,->] (-0.5,0.125) -- (-1.5,0.375) node [pos=0.5,above] {$\tb_4$};
    \draw[thick,->] (-0.5,-0.125) -- (-1.5,-0.375) node [pos=0.5,below] {$\tb_5$};
    \draw[thick,->] (0,-1.5) -- (0,-0.5) node [pos=0.5,right] {$\tb_6$};
    \draw[thick,->] (1.5,-0.375) -- (0.5,-0.125) node [pos=0.5,below] {$\tb_2$};

    \fill[blue] (-2,0.5) circle (0.05) node [left] {$\vb_1$};
    \fill[blue] (-2,-0.5) circle (0.05) node [left] {$\vb_2$};
    \fill[blue] (0,-2) circle (0.05) node [right] {$\vb_3$};
    \fill[blue] (0,0) circle (0.05) node [below] {$\vb_4$};
    \fill[blue] (0,2) circle (0.05) node [right] {$\vb_5$};
    \fill[blue] (2,0.5) circle (0.05) node [right] {$\vb_6$};
    \fill[blue] (2,-0.5) circle (0.05) node [right] {$\vb_7$};
    \draw[->] (-14:1) arc(-14:14:1) node[pos=0.5, right] {$2\varphi$};

  \end{tikzpicture}
  &
  \begin{tikzpicture}
    \draw[dashed, black!50!white] (-2,-2) rectangle (2,2);

    \draw (-2,0) -- (0,-2) -- (0,2) -- (2,0);


    \draw[thick,->] (-1.5,-0.5) -- (-0.5,-1.5) node [pos=0.5,right]
    {$\tb_1$};
    \draw[thick,->] (0,-0.5) -- (0,0.5) node [pos=0.5,right]
    {$\tb_2$};
    \draw[thick,->] (0.5,1.5) -- (1.5,0.5) node [pos=0.5,right]
    {$\tb_3$};

    \fill[blue] (-2,0) circle (0.05) node [right] {$\vb_1$};
    \fill[blue] (0,-2) circle (0.05) node [right] {$\vb_2$};
    \fill[blue] (0,2) circle (0.05) node [right] {$\vb_3$};
    \fill[blue] (2,0) circle (0.05) node [right] {$\vb_4$};

  \end{tikzpicture}
  \\
  (a) Example~\ref{example_plus}.
  &
  (b) Example~\ref{example_rhomb}.
  &
  (c) Example~\ref{example_blitz}.
  \end{tabular}

  \caption{The unit mesh pattern $\Gamma_Y$ of Examples~\ref{example_plus}--\ref{example_blitz}.}
  \label{fig:examples_1_2}
\end{figure}

In this section, we give a more practical way to compute the matrix
$\Abbh$ than solving~\eqref{canon}. Even though the geometries in Figure~\ref{fig:examples_1_2} are only with straight edges the following analysis also refers to curved edges.

\begin{lemma}\label{lexamples} The function $\phib=(\phi_1,\phi_2)$ is a solution
  to the canonical problem \eqref{canon} if and only if
  $s \mapsto \phib \circ \gamb_i(s) + \gamb_i(s)$ is affine, for all
  $i = 1 , \ldots, N_\cE$ and the Kirchhoff junction condition holds
  at all vertices, \ie,
  \begin{equation}\label{Kirchhoff}
    \sum_{\substack{i=1 \\ \gamb_i(\ell_i) = \vb_j}}^{N_\cE} (\phib
    \circ \gamb_i +\gamb_i) ' (\ell_i)  -
    \sum_{\substack{i=1 \\ \gamb_i(0) = \vb_j}}^{N_\cE} (\phib \circ
    \gamb_i +\gamb_i) ' (0)  = 0
  \end{equation}
  for all vertices $\vb_j \in \cV$, $j = 1, \ldots, N_\cV$.
\end{lemma}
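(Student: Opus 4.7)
The plan is to test \eqref{canon} against two classes of test functions: first, functions supported strictly inside a single edge, to extract the edge-wise ODE; second, functions supported in the star of a vertex, to extract the Kirchhoff-type transmission conditions. Since $\gamb_i$ is an arc length parametrization, $(\phib\circ\gamb_i)'(s) = \partial_\Gamma \phib(\gamb_i(s))$ and $\gamb_i'(s) = \tb(\gamb_i(s))$; the identity to prove is therefore equivalent to $(\phib\circ\gamb_i + \gamb_i)''(s) = 0$ on each edge, together with \eqref{Kirchhoff}.

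First I would take $\psi\in C_c^\infty(\mathring{e_i})$ extended by zero to $\Gamma_Y$, which is a valid element of $\Hper(\Gamma_Y)$. Restricting \eqref{canon} to its support and changing variables via $\gamb_i$ yields
\[
\int_0^{\ell_i} (\phib\circ\gamb_i)'(s)\,(\psi\circ\gamb_i)'(s)\,ds
= -\int_0^{\ell_i} \gamb_i'(s)\,(\psi\circ\gamb_i)'(s)\,ds.
\]
As $\psi\circ\gamb_i \in C_c^\infty(0,\ell_i)$ is arbitrary, distributional differentiation gives $(\phib\circ\gamb_i + \gamb_i)'' = 0$ in $\cD'(0,\ell_i)$, hence $s\mapsto \phib\circ\gamb_i(s) + \gamb_i(s)$ is affine. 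This yields the first claimed property, and conversely shows the bulk contribution vanishes after integration by parts on each edge for any test function in $\Hper(\Gamma_Y)$.

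Next, to obtain \eqref{Kirchhoff}, I would use general $\psi\in \Hper(\Gamma_Y)$. Splitting the integrals in \eqref{canon} edge by edge, integrating by parts on each edge and using the edge-wise ODE just established to annihilate the bulk, only boundary contributions at the endpoints of edges remain:
\[
\sum_{i=1}^{N_\cE}\Big[(\phib\circ\gamb_i + \gamb_i)'(\ell_i)\,\psi(\gamb_i(\ell_i))
- (\phib\circ\gamb_i + \gamb_i)'(0)\,\psi(\gamb_i(0))\Big] = 0.
\]
Because $\psi\in\Hper(\Gamma_Y)$ is continuous at every vertex of $\GYper$ (opposite boundary points being identified), we regroup terms vertex by vertex and factor out the common value $\psi(\vb_j)$. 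The arbitrariness of $\psi$ as an element of $\Hper(\Gamma_Y)$ —~choosing in particular a continuous $\psi$ that equals $1$ at a prescribed vertex $\vb_j$ and $0$ at all other vertices, which exists since $\GYper$ is a finite graph~— forces \eqref{Kirchhoff} for every $j$.

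The converse direction is straightforward: assuming both the affine property on each edge and \eqref{Kirchhoff}, one integrates by parts edge by edge for an arbitrary $\psi\in\Hper(\Gamma_Y)$, the bulk integrals vanish because the second derivative is zero, and regrouping the boundary terms per vertex and invoking \eqref{Kirchhoff} together with continuity of $\psi$ at vertices of $\GYper$ shows that the whole sum vanishes, which is exactly \eqref{canon}. The only subtlety worth handling carefully is the identification of opposite boundary points of $[0,1]^2$ when summing the boundary contributions, since edges touching these points correspond to a single vertex of $\GYper$; I would make this explicit at the step of regrouping to justify that $\psi$ takes a single value there.
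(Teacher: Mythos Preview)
Your proposal is correct and follows essentially the same approach as the paper: first test with functions compactly supported in the interior of a single edge to obtain the edgewise affine property, then integrate by parts edge by edge and use hat-type test functions concentrated at a single vertex to extract the Kirchhoff condition, with the converse obtained by reversing the computation. Your treatment is in fact slightly more careful than the paper's in that you explicitly flag the identification of opposite boundary points when regrouping the vertex contributions.
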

\begin{proof}
We first prove that any solution of~\eqref{canon} is affine and then prove that it satisfies the Kirchhoff junction conditions.
If $\phib$ solves \eqref{canon}, then we have
  \begin{equation}
    \label{lexamples1}
    \begin{aligned}
      0 = & \int_{\Gamma_Y} (\partial_{\Gamma} \phib + \tb) \cdot \partial_{\Gamma}
      \psib \ \mathrm{d}s( \yb) = \sum_{i=1}^{N_\cE}
      \int_0^{\ell_i} \left[ \left( \partial_{\Gamma} \phib + \tb \right)
        \cdot
        \partial_{\Gamma} \psib  \right] \circ \gamb_i (s) \mathrm d s  \\
      = &\sum_{i=1}^{N_\cE} \int_0^{\ell_i} \left[ (\phib \circ
        \gamb_i ) ' (s) + \gamb_i' (s) \right] \cdot (\psib \circ
      \gamb_i)'(s)
      \mathrm{d} s.
    \end{aligned}
  \end{equation}
  In the last equation we see that if we test the expression with
  function   $\psib \in C_c^{\infty} (\gamb_i(\langle 0,\ell_i\rangle ),\ZR^2)$
  (for a particular $i$), after integration by parts, we obtain that the distributional derivative of
  $(\phib \circ \gamb_i ) ' (s) + \gamb_i' (s)$ is zero, so we get
  that $s \mapsto \phib \circ \gamb_i(s) + \gamb_i(s)$ is affine for
  all $i=1,\ldots,N_\cE$. Performing partial integration in
  (\ref{lexamples1}) now gives
  \begin{equation*}
    \sum_{i=1}^{N_\cE} \left[ (\phib \circ \gamb_i ) ' (s) +
      \gamb_i' (s) \right] \cdot (\psib \circ \gamb_i)(s)
    \Big|_0^{\ell_i} = 0.
  \end{equation*}
  Inserting continuous and affine test function $\psib$ having value
  $\eb_1$ (respectively $\eb_2$) at a vertex $\vb_j \in \cV$ and $0$
  at all other vertices we obtain the Kirchhoff junction conditions.

  To prove the converse follow exactly the opposite statements.
\end{proof}

In view of this lemma, the function
\begin{equation*}
  \xb \mapsto\qb(\xb):= \phib(\xb) + \xb
\end{equation*}
defined on $\Gamma_Y$ is affine on each edge, continuous on $\Gamma_Y$, but not periodic.

Before we proceed let us introduce some more definitions. For vertex
$\vb \in \cV$ we define $\etab(\vb)$ as the vector equal to $\eb_1$
or $\eb_2$ if $\vb$ lies on the right or top side of the unit cell (excluding the corners), respectively,
$\eb_1 + \eb_2$ if $\vb$ lies on the right top corner
and $\zerob$ otherwise.
Assuming that vertices in $\cV$ are indexed, with $\vb_{\pi(i)}$ we denote
vertex equal to $\vb_i$ if $\vb_i$ is not located on the upper or
right boundary of the unit cell, and otherwise the vertex $\vb_{i'}$
such that $\vb_i$ and $\vb_{i'}$ are opposite vertices. Thus we have
identities $\vb_{\pi(i)} = \vb_i -\etab(\vb_i)$ and
\begin{equation} \label{WeirdPeriodicity}
\qb(\vb_i) = \qb(\vb_{\pi(i)}) + \etab(\vb_i), \quad i \in \{1, \ldots, N_\cV \}.
\end{equation}

Since from this identity we see that values of function $\qb$ are uniquely defined in vertices on the right and upper boundary once they are defined in all other vertices, we define
\begin{equation}
  \label{eq:definition_b_Q}
  \b_j = \frac{\text{d}}{\text{d}s}(\qb\circ \gamb_j)(\cdot), \qquad \Qb_i = \qb(v_i),
\end{equation}
for $i=1,\ldots, N_{\cV'}$, $j=1,\ldots,N_\cE$, where $\cV'$ denotes the set
of all vertices not located on the right and upper boundary of the unit
cell. Without loss of generality, those vertices are indexed as first
$N_{\cV'}$ vertices in the set $\cV$. We will show that the matrix
$\Abbh$ can be found without solving the canonical problem \eqref{canon},
but by solving linear system of equations in terms of
$2N_\cE + 2 N_{\cV'}$ unknowns defined in \eqref{eq:definition_b_Q}.

We introduce incidence matrix
$\Abb_{\cI} \in M_{2N_{\cV'}, 2N_\cE} (\ZR)$ of the oriented periodic
graph $\Gamma_Y$, such that $2\times2$
submatrix on intersection of rows $2i-1,2i$ and columns $2j-1,2j$ is
equal to $\Ibb$ if the $j$-th edge enters the $i$-th vertex (\ie~$\gamb_j(\ell_j) = \vb_i$), equal to $-\Ibb$ if the $j$-th edge leaves
the $i$-th vertex (\ie~$\gamb_j(0) = \vb_i$), and $\0b$ otherwise. For a
similar argument see \cite{Zugec}. Thus, equations \eqref{Kirchhoff}
can be written, introducing the notation
$\b=(\b_1^T,\ldots,\b_{N_\cE}^T)^T$, by
\begin{equation}\label{examplesSystem1}
  \Abb_\cI \b = \0b.
\end{equation}
Secondly, for any edge $e_j$, $j = 1, \ldots, N_{\cE}$, which connects
vertices $ \vb_{i_1} = \gamb_j(\ell_j)$ and $\vb_{i_2} = \gamb_j(0)$
we use the definition of $\bb_j$ and $\Qb_i$ in~\eqref{eq:definition_b_Q} for a function $\qb$ and the Newton-Leibnitz
theorem  to obtain
\begin{equation}\label{Q}
  \begin{aligned}
    \Qb_{\pi(i_1)} - \Qb_{\pi(i_2)} &= \qb(\vb_{\pi(i_1)}) -
    \qb(\vb_{\pi(i_2)}) \\
    &= (\qb(\vb_{i_1}) - \qb(\vb_{i_2}))  - (\etab(\vb_{i_1}) - \etab(\vb_{i_2}))\\
    &= \int_{0}^{\ell_j} \frac{\mathrm{d}}{\mathrm{d}s}(\qb
    \circ \gamb_j)(s) \, \mathrm{d}s - (\etab(\gamb_j(\ell_j)) - \etab(\gamb_j(0))) \\
    & = \ell_j \b_j - (\etab(\gamb_j(\ell_j)) - \etab(\gamb_j(0))).
  \end{aligned}
\end{equation}
Hence, introducing the diagonal matrix
$\Lbb \in M_{2N_\cE, 2N_\cE} (\ZR)$ whose $(2j-1)$-th and $2j$-th
diagonal entries are equal to $\ell_j$, the vector $\fb \in \ZR^{2N_\cE}$
whose $(2j-1)$-th and $2j$-th component are
the first and the second component of
$\etab(\gamb_j(\ell_j)) - \etab(\gamb_j(0))$, respectively, and
$\Qb=(\Qb_1^T,\ldots,\Qb_{N_{\cV'}}^T)^T$ the equations~\eqref{Q} can be written as
\begin{equation} \label{examplesSystem2}
  \Abb_\cI^T \Qb + \Lbb \b = \fb.
\end{equation}
Equations \eqref{examplesSystem1} and \eqref{examplesSystem2} together form a linear system for $(\b,\Qb)$:
\begin{equation} \label{examplesSystem}
  \begin{bmatrix}
    \Lbb & \Abb_\cI^T \\
    \Abb_\cI & \0b
  \end{bmatrix}
  \begin{bmatrix}
    \b \\
    \Qb
  \end{bmatrix}
  =
  \begin{bmatrix}
    \fb \\
    \0b
  \end{bmatrix}.
\end{equation}
\begin{lemma} \label{lexamples1}
  System \eqref{examplesSystem} uniquely defines $\b$, and
  consequently it uniquely defines $\Abbh$.
\end{lemma}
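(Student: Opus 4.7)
The plan is to treat \eqref{examplesSystem} as a saddle-point system with positive-definite leading block $\Lbb$, and separate existence of $(\bb,\Qb)$ from uniqueness of the $\bb$-part alone. Existence is not the issue: Lemma~\ref{lcanon} yields a $\phib \in \Hper(\Gamma_Y)^2$ solving \eqref{canon}; by Lemma~\ref{lexamples}, the function $\qb = \phib + \id$ is affine on every edge and satisfies the Kirchhoff condition at every vertex. Setting $\bb_j := (\qb\circ\gamb_j)'$ and $\Qb_i := \qb(\vb_i)$ for $i \in \{1,\ldots,N_{\cV'}\}$ then produces a pair $(\bb,\Qb)$ satisfying \eqref{examplesSystem1} (from Kirchhoff) and \eqref{examplesSystem2} (from the Newton--Leibniz computation~\eqref{Q}, which relies on~\eqref{WeirdPeriodicity}).

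For uniqueness of $\bb$, I would run the classical saddle-point argument on the homogeneous system $\Lbb\bb + \Abb_\cI^T \Qb = \zerob$, $\Abb_\cI \bb = \zerob$. Testing the first equation against $\bb^T$ and using the second gives
\begin{equation*}
\bb^T \Lbb \bb \;=\; -\bb^T \Abb_\cI^T \Qb \;=\; -(\Abb_\cI \bb)^T \Qb \;=\; 0.
\end{equation*}
Since $\Lbb$ is diagonal with strictly positive entries $\ell_j > 0$, this forces $\bb = \zerob$. Consequently any two solutions of \eqref{examplesSystem} share the same $\bb$-component; the residual non-uniqueness lies entirely in $\Qb$, which is consistent with the fact that $\phib$ in \eqref{canon} is determined only up to an additive constant in $\ZR^2$ (the kernel of $\Abb_\cI^T$ is two-dimensional, corresponding to componentwise constants on the connected periodic graph $\GYper$).

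Finally, for the ``consequently'' clause, I would rewrite \eqref{eq:A_hom} in terms of $\bb$. Since $\tb(\yb) + \partial_\Gamma \phib(\yb) = \partial_\Gamma \qb(\yb)$, and $\partial_\Gamma\qb$ equals the constant vector $\bb_j$ on the $j$-th edge $e_j$ of length $\ell_j$, the definition of the curve integral yields
\begin{equation*}
\Abbh \;=\; \frac{1}{|\Gamma_Y|} \sum_{j=1}^{N_\cE} \int_{e_j} \bb_j \bb_j^T \,\mathrm{d}s(\yb)
\;=\; \frac{1}{|\Gamma_Y|} \sum_{j=1}^{N_\cE} \ell_j \, \bb_j \bb_j^T .
\end{equation*}
Thus $\Abbh$ is a fixed function of the uniquely determined $\bb$ and the edge lengths $\ell_j$, so it is uniquely determined by \eqref{examplesSystem}. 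The only delicate point is bookkeeping the indexing via $\pi$ and the right-hand side $\fb$ from~\eqref{WeirdPeriodicity} in the existence step; the uniqueness of $\bb$ itself is the standard one-line saddle-point argument and should pose no real obstacle.
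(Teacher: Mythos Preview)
Your proof is correct, but it takes a genuinely different route from the paper's. The paper proceeds purely algebraically: it eliminates $\bb$ via $\bb = \Lbb^{-1}(\fb - \Abb_\cI^T\Qb)$, obtains the Schur-complement system $\Abb_\cI\Lbb^{-1}\Abb_\cI^T\Qb = \Abb_\cI\Lbb^{-1}\fb$, proves its solvability by the Kronecker--Capelli criterion (using $\operatorname{Im}(\Abb_\cI\Lbb^{-1}\Abb_\cI^T) = \operatorname{Ker}(\Abb_\cI^T)^\perp = \operatorname{Im}\Abb_\cI$), and then observes that back-substitution gives the same $\bb$ regardless of which $\Qb \in \Qb_0 + \operatorname{Ker}\Abb_\cI^T$ one picks. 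You instead import existence from the analytical side (Lemmas~\ref{lcanon} and~\ref{lexamples}) and dispatch uniqueness of $\bb$ with the one-line energy identity $\bb^T\Lbb\bb = -(\Abb_\cI\bb)^T\Qb = 0$. Your uniqueness argument is cleaner; the paper's existence argument is more self-contained, since the whole point of Section~\ref{sec5} is to compute $\Abbh$ without touching the PDE, and the Schur-complement reduction is what is actually implemented in the worked examples. Both arrive at the identical formula $\Abbh = |\Gamma_Y|^{-1}\sum_j \ell_j\,\bb_j\bb_j^T$.
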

\begin{proof}
  First,
  \eqref{examplesSystem2} is equivalent to
  \begin{equation} \label{examplesLema2eq1}
    \b = \Lbb^{-1} (\fb - \Abb_\cI^T \Qb).
  \end{equation}
  Inserting this expression in the second equation in \eqref{examplesSystem}  -- which is \eqref{examplesSystem1} -- we get
  \begin{equation}
    \label{examplesLema2eq2}
    \Abb_\cI \Lbb^{-1} \Abb_\cI^T \Qb = \Abb_\cI \Lbb^{-1} \fb.
  \end{equation}
  To show the solvability of this system, \ie~the existence of
  $\Qb$, we need to prove the condition from the Kronecker-Capelli
  theorem~\cite[p.~56]{shafarevich2012linear}:
  \begin{equation*}
    \Abb_\cI \Lbb^{-1} \fb \in \Imm{(\Abb_\cI \Lbb^{-1} \Abb_\cI^T)} =
    \Ker (\Abb_\cI \Lbb^{-1} \Abb_\cI^T)^\perp = \Ker
    (\Abb_\cI^T)^\perp = \Imm{\Abb_\cI},
  \end{equation*}
  which is clear. Thus, there is a solution to
  \eqref{examplesLema2eq2}. All solutions are described with
  $\Qb \in \Qb_0 + \Ker{\Abb_\cI^T}$ where $\Qb_0$ is a fixed
  solution.  Plugging it in back to \eqref{examplesLema2eq1} we get
  \begin{equation*}
    \b = \Lbb^{-1} (\fb - \Abb_\cI^T \Qb)  = \Lbb^{-1} (\fb - \Abb_\cI^T \Qb_0),
  \end{equation*}
  which is unique. Thus the matrix $\Abbh$ is uniquely defined as well since
  \begin{equation} \label{examplesLema2Mbb}
    \Abbh =  \frac{1}{|\Gamma_Y|} \int_{\Gamma_Y} (\partial_{\Gamma} \phib(\yb) +
    \tb(\yb))(\partial_{\Gamma} \phib(\yb) + \tb(\yb))^T \
    \mathrm{d} s(\yb)
    =  \frac{1}{|\Gamma_Y|} \sum_{j=1}^{N_\cE} \ell_j  \b_j \b_j^T,
  \end{equation}
  and $|\Gamma_Y| = \sum_{j=1}^{N_\cE} \ell_j.$
\end{proof}

As a contrary to $\b$, vector $\Qb$ is not uniquely defined by the
system \eqref{examplesSystem}, since due to Theorem~4.2.4 in
\cite{Z21}, $\Ker{\Abb_\cI^T}$ is two-dimensional. This is in
accordance with Lemma~\ref{lcanon}, since $\phib$ is defined
uniquely up to an additive constant.

\begin{remark}
  From the system~\eqref{examplesSystem}, from Lemma~\ref{lexamples1}
  and from the expression for $\Abbh$ we see that all properties of the
  homogenized problem come solely from the connectivity properties of the
  vertices in the oriented graph $\Gamma_Y$
  and lengthes of its edges. Thus, positions of vertices in the unit cell or
  differential geometry properties of edges forming the graph do not
  play a role in the definition of the matrix $\Abbh$ and thus in the homogenized problem.
\end{remark}

\begin{example}\label{example_plus}\em
  Let us find the operator $\Abbh$ for the geometry shown in
  Fig.~\ref{fig:examples_1_2}(a). The graph consists of $4$ edges and
  $5$ vertices with two of them being located on the right or upper
  boundary of the unit cell. That is the reason why the matrix
  $\Abb_\cI$ is $6\times8$ matrix and it is equal to
  \begin{equation*}
    \Abb_\cI = \begin{bmatrix}
      \Ibb_2 & \0b & -\Ibb_2 & \0b \\
      \0b & \Ibb_2 & \0b & -\Ibb_2 \\
      -\Ibb_2 & -\Ibb_2 & \Ibb_2 & \Ibb_2
    \end{bmatrix},
  \end{equation*}
  where  $\Ibb_n$ denotes the $n\times n$ identity
  matrix. Since all edges have equal length $\frac12$, the diagonal matrix $\Lbb$ is
  simply given by $\Lbb = \frac{1}{2} \Ibb_{2N_\cE}$, and, hence, $\Lbb^{-1} = 2
  \Ibb_{2N_\cE}$. As the first and second edge are ending on the right or upper boundary and, hence,
  $\etab(\gamb_1(\ell_1)) - \etab(\gamb_1(0)) = (1,0)^\top$ and
  $\etab(\gamb_2(\ell_2)) - \etab(\gamb_2(0)) = (0,1)^\top$ we find
  \begin{equation*}
    \fb = (1,0,0,1,0,0,0,0)^\top .
  \end{equation*}
  Now, we have everything explicitly defined to compute the homogenized tensor $\Abbh$.
  We proceed as in the proof of the Lemma
  \ref{lexamples1}. First, we seek one solution $\Qb^0$ of \eqref{examplesLema2eq2} which is in our case
  \begin{equation*}
    \begin{bmatrix}
      4 \Ibb_2 & \0b & -4\Ibb_2 \\
      \0b & 4\Ibb_2 & -4\Ibb_2 \\
      -4\Ibb_2 & -4\Ibb_2  & 8\Ibb_2
    \end{bmatrix}
    \Qb
    =
    2 \begin{bmatrix}
      \eb_1 \\
      \eb_2 \\
      - \eb_1 - \eb_2
    \end{bmatrix}.
  \end{equation*}
  Such a solution  is $\Qb^0 = \left(\frac{1}{2}, 0, 0,\frac{1}{2}, 0,0 \right)^\top$.
  Now, inserting $\Qb^0$ into \eqref{examplesLema2eq1} we get
    $\bb_1 = \bb_3 = \eb_1, \ \bb_2  = \bb_4 = \eb_2$,
  and finally inserting it into~\eqref{examplesLema2Mbb} leads to $\Abbh = \frac{1}{2} \Ibb_2$.
\end{example}

So, in the most simple unit-cell pattern, that was discussed in Example~\ref{example_plus}
the homogenized tensor is just a multiple of the identity matrix and in the homogenized
model we have just the two-dimensional Laplacian scaled by a factor.


In the following two examples we consider a unit-cell pattern
with high level of symmetry
for which the homogenized tensor $\Abbh$ is
a diagonal matrix but not just a multiple of the identity matrix $\Ibb_2$
or a non-diagonal matrix, respectively.

\begin{example}\em \label{example_rhomb}
  We consider the unit-cell graph $\Gamma_Y$ in
  Fig.~\ref{fig:examples_1_2}(b) that is composed of one vertical line and two lines
  that go through the center of the unit cell
  where the latter two intersect in an
  angle of $2\varphi$, where $\varphi \in (0,\frac{\pi}{4})$.
  The corresponding graph is composed of two edges of length $\frac 1 2$ and four edges
  of length $1/{(2 \cos \varphi)}$.

  Its matrices $\Abb_\cI$ and $\Lbb$ and vector $\fb$ are
\begin{equation*}
    \Abb_\cI = \begin{bmatrix}
      -\Ibb_2 & \0b & \0b & \Ibb_2 & \0b  & \0b \\
      \0b & -\Ibb_2 & \0b & \0b & \Ibb_2 & \0b \\
      \0b & \0b & -\Ibb_2 & \0b & \0b & \Ibb_2 \\
      \Ibb_2 & \Ibb_2 & \Ibb_2 & -\Ibb_2 & -\Ibb_2 & -\Ibb_2
    \end{bmatrix},
\end{equation*}
$$\Lbb = \diag{} \left(\frac{1}{2\cos \varphi}, \frac{1}{2\cos \varphi}, \frac{1}{2}, \frac{1}{2\cos \varphi}, \frac{1}{2\cos \varphi}, \frac{1}{2}\right) \otimes \Ibb_2,$$
and $\fb = (-1,0,-1,0,0,1,0,0,0,0,0,0)^T$.
  Again by solving first \eqref{examplesLema2eq1} and then plugging
  it in \eqref{examplesLema2eq1} and \eqref{examplesLema2Mbb} we get
  one particular solution $\Qb^0$, the vector $\bb$ and
  the homogenized tensor $\Abbh$:
  \begin{equation*}
    \Qb^0 = \frac{1}{2}\begin{bmatrix}
      \eb_1 \\
      \eb_1 \\
      -\eb_2 \\
      \0b
    \end{bmatrix}, \
    \bb = \begin{bmatrix}
      - \cos\varphi \eb_1 \\
      - \cos\varphi \eb_1 \\
      \eb_2 \\
      \cos\varphi \eb_1 \\
      \cos\varphi \eb_1 \\
      - \eb_2 \\
    \end{bmatrix}, \
    \Abbh = \frac{\cos \varphi}{\cos \varphi+2}\begin{bmatrix}
      2\cos \varphi & 0 \\
      0 & 1
    \end{bmatrix}.
  \end{equation*}
  For all angles $\varphi \in (0,\pi/4)$ the homogenized tensor~$\Abbh$
  is not a multiple of the identity matrix.
\end{example}

\begin{example}\em \label{example_blitz}
  We consider now the unit cell graph in Fig.~\ref{fig:examples_1_2}(c) for which the homogenized tensor $\Abbh$ is
  even not diagonal. It consists of $3$ edges and $4$ vertices with $2$ of them being on the right or upper
  side of unit cell. From that (and other properties of the graph) we
  see that
  \begin{equation*}
    \Abb_\cI = \begin{bmatrix}
      - \Ibb_2 & \0b & \Ibb_2 \\
      \Ibb_2 & \0b & - \Ibb_2
    \end{bmatrix}\ .
  \end{equation*}
  We notice that second edge of the graph is actually a loop in the
  corresponding periodic graph and consequently it does not appear in
  the matrix $\Abb_\cI$ since two identity matrices cancel out in matrix,
  on for entering and and one for leaving the same vertex.

  Moreover, the matrix $\Lbb$ and right-hand side $\fb$ are equal to
  \begin{align*}
    \Lbb &= \begin{bmatrix}
      \frac{\sqrt{2}}{2} \Ibb_2 & \0b & \0b \\
      \0b &  \Ibb_2 & \0b \\
      \0b & \0b & \frac{\sqrt{2}}{2} \Ibb_2
    \end{bmatrix}, &
    \fb &= \begin{bmatrix}
      \0b \\
      \eb_2 \\
      \eb_1-\eb_2
    \end{bmatrix}.
  \end{align*}

  In the same way as in the previous example, we find one particular solution $\Qb^0$, the vector $\bb$ and the homogenized tensor $\Abbh$:
  \begin{equation*}
    \Qb^0 = \begin{bmatrix}
      \frac{1}{4}(\eb_1 - \eb_2) \\
      - \frac{1}{4}(\eb_1 - \eb_2)
    \end{bmatrix}, \
    \bb = \begin{bmatrix}
      \frac{\sqrt{2}}{2}(\eb_1 - \eb_2) \\
      \eb_2 \\
      \frac{\sqrt{2}}{2}(\eb_1 - \eb_2)
    \end{bmatrix}, \
    \Abbh = \frac{1}{1+\sqrt{2}}\begin{bmatrix}
      \frac{\sqrt{2}}{2} & -\frac{\sqrt{2}}{2} \\
      -\frac{\sqrt{2}}{2} & \frac{\sqrt{2}}{2}+1
    \end{bmatrix},
  \end{equation*}
  where the latter is not diagonal.
\end{example}

\begin{figure}[!bt]
  \begin{tabular}{ccc}
    \includegraphics[height=0.3\linewidth]{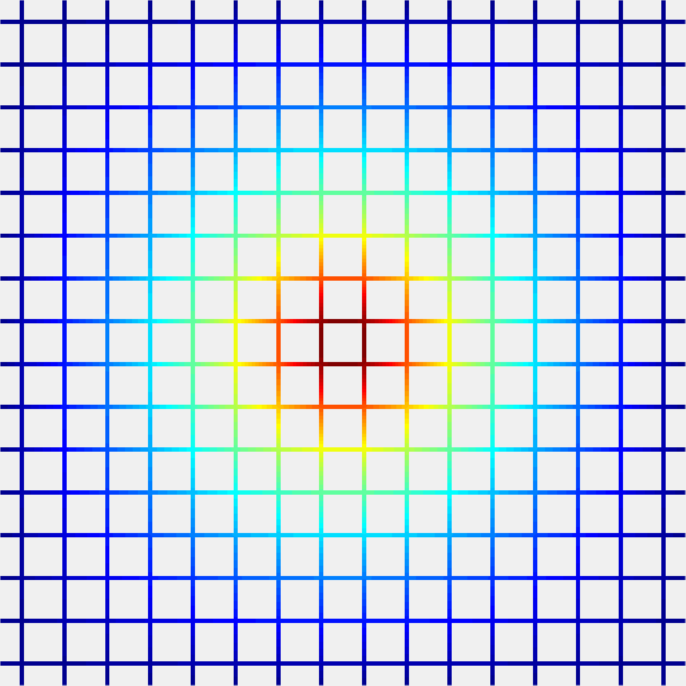}
    &
    \includegraphics[height=0.3\linewidth]{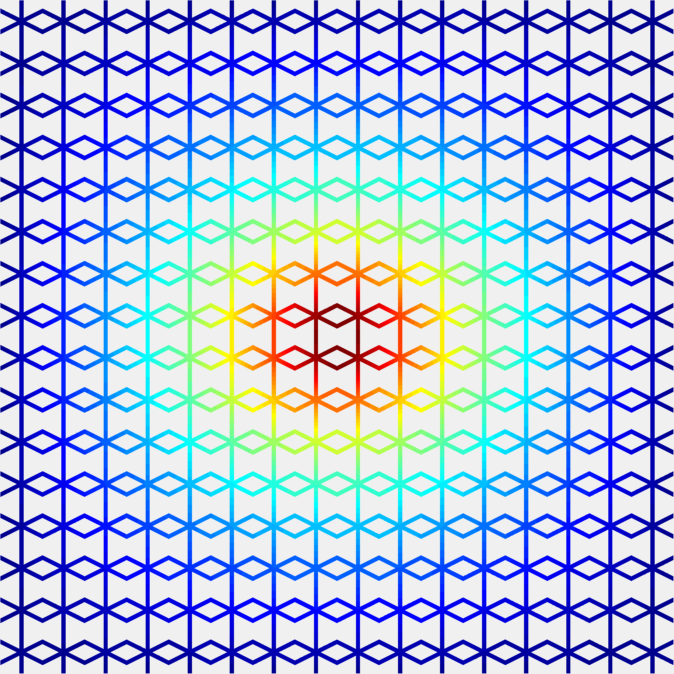}
    &
    \includegraphics[height=0.3\linewidth]{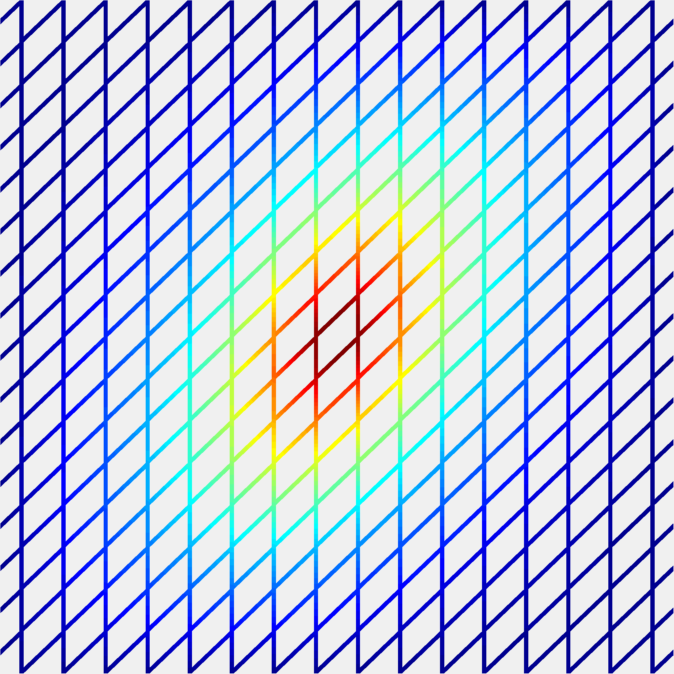}\\
    \includegraphics[height=0.3\linewidth]{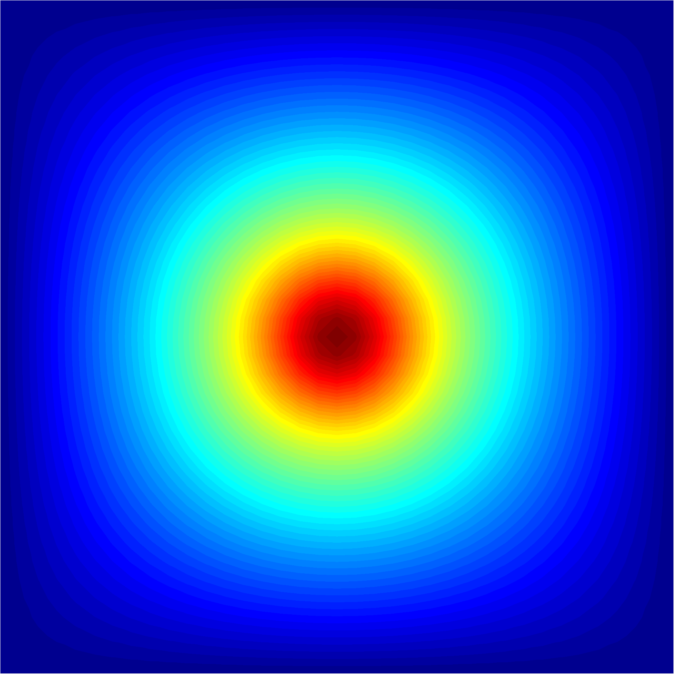}
    &
    \includegraphics[height=0.3\linewidth]{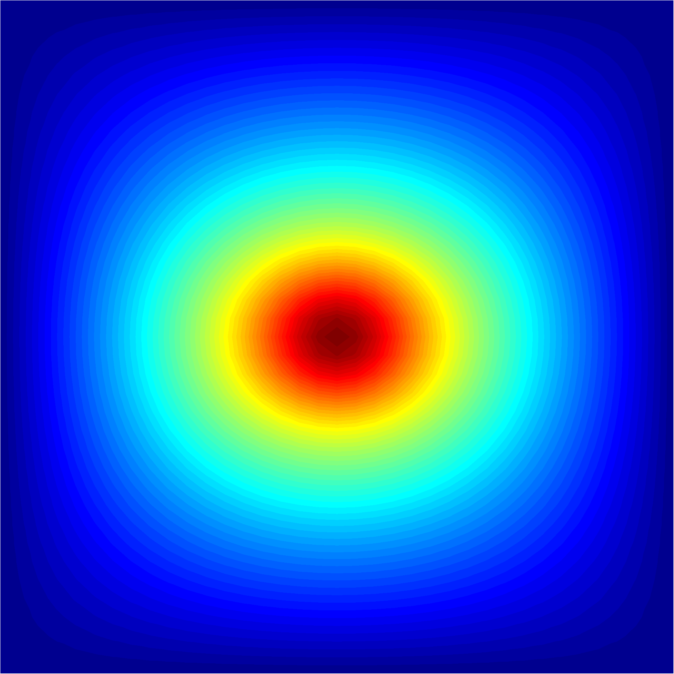}
    &
    \includegraphics[height=0.3\linewidth]{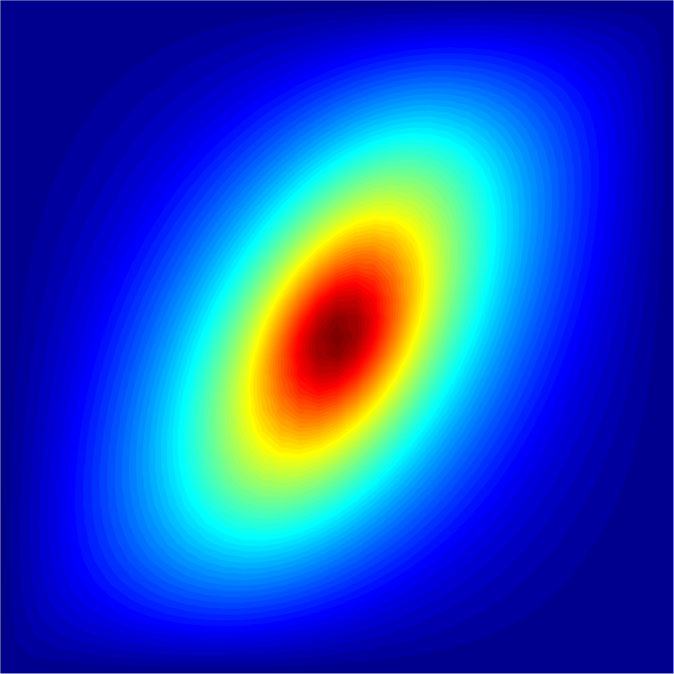}\\[0.5em]
    (a) Example~\ref{example_plus}. &
    (b) Example~\ref{example_rhomb}. &
    (c) Example~\ref{example_blitz}.
  \end{tabular}
  \caption{Temperature distribution on the mesh~$\Omegad$ ({\em top row}) and for the homogenized model on the domain~$\Omega = (0,1)^2$ ({\em bottom row}) for $\delta = 1/16$ at time $\tf = 2$.
    The same colorbar is used in all figures that scales from $0$ (blue) to $1.4\cdot 10^{-4}$ (red).}
  \label{fig:plots_case1}
\end{figure}

\section{Numerical experiments}
\label{sec:numerics}

In this section we compare the solutions of the derived homogenized model~\eqref{P} and corresponding $\delta$-problem (\ref{deltaP}) for the three pattern
introduced in Example~\ref{example_plus}--\ref{example_blitz} by numerical
simulations using the numerical C++ library
Concepts~\cite{Frauenfelder.Lage:2002}.

For this we apply first a semi-discretization in space using the finite element method
with continuous and piecewise polynomial functions for both,
the $\delta$-dependent problem~\eqref{deltaP} on the mesh $\Omegad$
as well as for the homogenized problem~\eqref{P} on the domain $\Omega$.
For this each edge of the the mesh $\Omegad$ is subdivided into smaller edges to obtain
a one-dimensional finite element mesh and the domain $\Omega$ is partitioned into non-overlapping quadrilateral cells.

Denoting the stiffness matrix of the semi-discretized $\delta$-dependent problem~\eqref{deltaP}
by $\Kbb^\delta$, the mass matrix by $\Mbb^\delta$, the
right hand side vector by $F^\delta(t)$,
the time-dependent solution vector (associated to $u^\delta(t,\cdot)$) by $U^\delta(t)$
and the initial vector (associated to $u_\init$) by $U^\delta_\init$
the semi-discrete problem reads
\begin{equation}
  \label{eq:heat_delta_half_discretized}
  \Mbb^\delta \partial_t{U}^\delta(t) + \Kbb^\delta U^\delta(t) =
  F^\delta(t), \quad t > 0, \qquad U^\delta(0) = U^\delta_\init.
\end{equation}
To impose the Dirichlet boundary conditions a penalization is used.

For the time-discretization of (\ref{eq:heat_delta_half_discretized}) we are
using the Crank-Nicholson scheme that is of second order in the (uniform) time
step $\Delta_T$.  Then, with $F^\delta_n = F^\delta(n \Delta_T)$ the solution
vectors $U^\delta_n = U^\delta(n \Delta_T)$ at time $t_n = n \Delta_T$ fullfil
the linear systems
\begin{equation}
  \label{eq:heat_delta_full_discretized}
  \Big( \Mbb^\delta + \tfrac{\Delta_T}{2}
  \Kbb^\delta \Big) U^\delta_{n+1} = \Big( \Mbb^\delta  - \tfrac{\Delta_T}{2}
  \Kbb^\delta \Big) U^\delta_n + \tfrac{\Delta_T}{2} \Mbb^\delta
  \big( F^\delta_{n+1} + F^\delta_n \big), \quad n \in \ZN\ ,
\end{equation}
with the initial condition $U^\delta_0 = U^\delta_\init$.

Discretizing the homogenized problem~\eqref{P} in the same way we obtain
solution vectors $U^0_n$ at time $t_n = n \Delta_T$.  We project this discrete
solution at each time onto the finite element mesh of $\Omegad$, where
$\cP^\delta(U^0_n)$ is the projection of the vector $U^0_n$, to compute an
approximation of the relative $\Ltwo(\Omegad)$ error
\begin{equation*}
  \cE(t_n) := \sqrt{\Big( \big( U^\delta_n - \cP^\delta(U^0_n)
    \big)^T \Mbb^\delta \big( U^\delta_n - \cP^\delta(U^0_n) \big) \Big) /
    \Big( \big( U^\delta_n \big)^T \Mbb^\delta U^\delta_n \Big) }.
\end{equation*}

For all simulations we consider the domain
$\Omega = (0,1)^2$, the coefficients $\rho c_p = a = 1$,
the source term
\begin{equation*}
  f^\delta(t,\xb) = f(t,\xb,\tfrac{\xb}{\delta}) = 4\exp(-196|\xb-(0.5,0.5)|^2)\exp(-3t)
\end{equation*}
that depends only on the macroscopic variable and, hence,
$f_{\homg} = f^\delta$, and the initial data $u_\init = 0$. We found the
discretization error small in comparison to the modelling error when splitting each edge of
$\Gammad(n_1,n_2)$ uniformly into three (smaller) edges on which polynomials of
degree $2$ are used. In this way the
discretization error decreases with decreasing period $\delta$.
For the homogenized solution, we use a uniform mesh of 16 square cells and
polynomials of degree $6$. For the time
discretization of the Crank-Nicholson scheme we used as time step
$\Delta_T=0.002$.

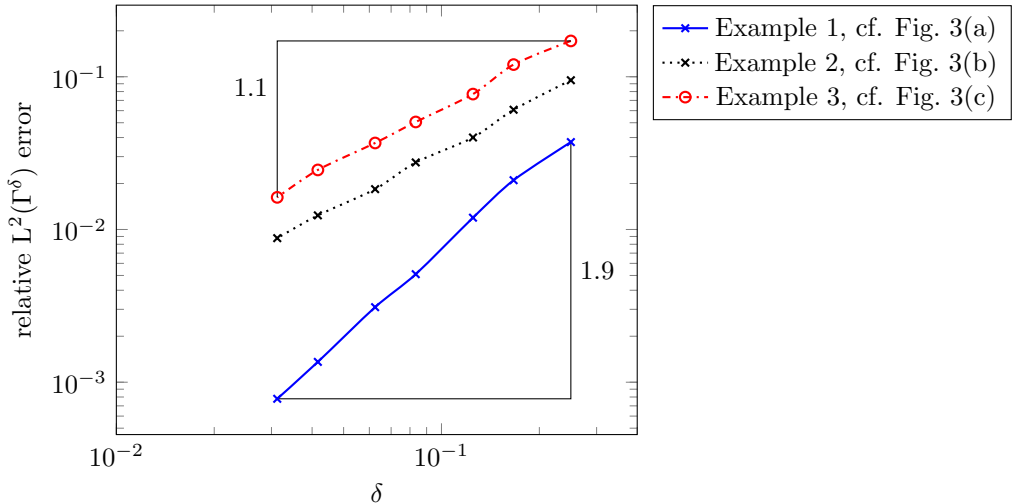
\begin{figure}[!bt]
  \centering
  \null\hfill
  \begin{tikzpicture}
    \begin{loglogaxis}[
      xlabel=$\delta$,
      ylabel=relative $\Ltwo(\Omegad)$ error,
      xmin=0.01,
      xmax=0.4,
      legend pos=outer north east]
      \addplot+[color=blue,thick,smooth,mark=x, mark options={solid, color=blue}] table [x = delta, y = error]{L2_delta_example1_decaying_cross.txt};
      \addlegendentry{Example~\ref{example_plus}, cf. Fig.~\ref{fig:examples_1_2}(a)};
      \addplot+[color=black,thick,dotted,smooth,mark=x, mark options={solid, color=black}] table [x = delta, y = error]{L2_delta_example1_decaying_one_bow.txt};
      \addlegendentry{Example~\ref{example_rhomb}, cf. Fig.~\ref{fig:examples_1_2}(b)};
      \addplot+[color=red,thick,dashdotted,smooth,mark=o, mark options={solid, color=red}] table [x = delta, y = error]{L2_delta_example1_decaying_blitz.txt};
      \addlegendentry{Example~\ref{example_blitz}, cf. Fig.~\ref{fig:examples_1_2}(c)};
      \draw ({axis cs:0.03125,0.000778905835267}) -| ({axis cs:0.25,0.037349702895225}) node[pos=0.75,right] {$1.9$};
      \draw ({axis cs:0.03125,0.016231756182536}) |- ({axis cs:0.25,0.171603212815735}) node[pos=0.35,left] {$1.1$};
    \end{loglogaxis}
  \end{tikzpicture}
  \hfill\null
  \caption{Plot of the relative $\Ltwo(\Omegad)$ error $\cE(\tf)$ with respect to the period
    $\delta$ for $\tf=2$ for the three unit cell patterns in Fig.~\ref{fig:examples_1_2}. For Example~\ref{example_rhomb} the angle is
    $\phi=\tan^{-1}(0.5)$.}
  \label{fig:L2_error_case1}
\end{figure}

In Fig.~\ref{fig:plots_case1} the discretized solution of the heat
equation~(\ref{eq:Heat_equation}) on the mesh $\Omegad$ with $\delta = 1/16$ --
for illustrative purposes on a thickened graph of thickness $1/160$ -- and the
homogenized solution,\ie the solution of~(\ref{P}), is illustrated for the
Examples~\ref{example_plus}--\ref{example_blitz} at time $\tf = 2$.  For
Example~\ref{example_rhomb} the angle is $\phi=\tan^{-1}(0.5)$.  Note, that in
all figures in this section the same colorbar is used that scales from $0$
(blue) to $1.4\cdot 10^{-4}$ (red). %
For Example~\ref{example_plus}, for which the homogenized tensor is a multiple
of the identity matrix $\Ibb_2$, we observe the temperature decays from the
mid-point approximately the same way in all directions.  For
Example~\ref{example_rhomb}, for which the homogenized tensor is diagonal but
not a multiple of $\Ibb_2$, we observe a faster decay in one axis direction. %
For Example~\ref{example_blitz}, for which the homogenized tensor is not even
diagonal, we observe a faster decay in another direction.  In all three
examples the homogenized solution is in very good agreement with the solution
of the heat equation on the mesh~$\Omegad$.

In Fig.~\ref{fig:L2_error_case1} the (approximative) relative
$\Ltwo(\Omegad)$-error $\cE(t_n)$ is shown as a function of the
period~$\delta$. We observe a linear convergence to $0$ for
Example~\ref{example_rhomb} and Example~\ref{example_blitz}.
Moreover for Example~\ref{example_plus}
the limit solution shows a quadratic convergence in $\delta$ that goes beyond
the theory.

\begin{figure}[!tb]
  \null
  \hfill
  \begin{tikzpicture}
    \draw[dashed, black!50!white] (-2,-2) rectangle (2,2);
    \draw[white] (-2,-4) -- (0,2);

    \draw (-2,-2) -- (2,2);
    \draw (-2,2) -- (2,-2);
    \draw[thick,<-] (0.5,0.5) -- (1.5,1.5) node [pos=0.5,below] {$\tb_2$};
    \draw[thick,->] (-0.5,0.5) -- (-1.5,1.5) node [pos=0.5,below] {$\tb_1$};
    \draw[thick,->] (-1.5,-1.5) -- (-0.5,-0.5) node [pos=0.5,below] {$\tb_4$};
    \draw[thick,<-] (1.5,-1.5) -- (0.5,-0.5) node [pos=0.5,below] {$\tb_3$};

    \fill[blue] (2,2) circle (0.05) node [left] {$\vb_4$};
    \fill[blue] (-2,-2) circle (0.05) node [right] {$\vb_1$};
    \fill[blue] (2,-2) circle (0.05) node [left] {$\vb_2$};
    \fill[blue] (0,0) circle (0.05) node [right] {$\vb_3$};
    \fill[blue] (-2,2) circle (0.05) node [right] {$\vb_5$};
  \end{tikzpicture}
  \hfill
  \begin{tikzpicture}
    \draw[dashed, black!50!white] (-2,-2) rectangle (2,2);
    \draw[white] (-2,-4) -- (0,2);

    \draw (-2,0) -- (0,-2) -- (2,0) -- (0,2) -- (-2,0);
    \draw[thick,->] (1.5,0.5) -- (0.5,1.5) node [pos=0.5,below] {$\tb_1$};
    \draw[thick,->] (-0.5,1.5) -- (-1.5,0.5) node [pos=0.5,below] {$\tb_2$};
    \draw[thick,->] (-1.5,-0.5) -- (-0.5,-1.5) node [pos=0.5,below] {$\tb_3$};
    \draw[thick,->] (0.5,-1.5) -- (1.5,-0.5) node [pos=0.5,below] {$\tb_4$};

    \fill[blue] (2,0) circle (0.05) node [left] {$\vb_4$};
    \fill[blue] (-2,0) circle (0.05) node [right] {$\vb_1$};
    \fill[blue] (0,-2) circle (0.05) node [right] {$\vb_2$};
    \fill[blue] (0,2) circle (0.05) node [right] {$\vb_3$};
  \end{tikzpicture}
  \hfill
  \null\\[-4em]
  \null
  \hfill
  \includegraphics[height=4cm]{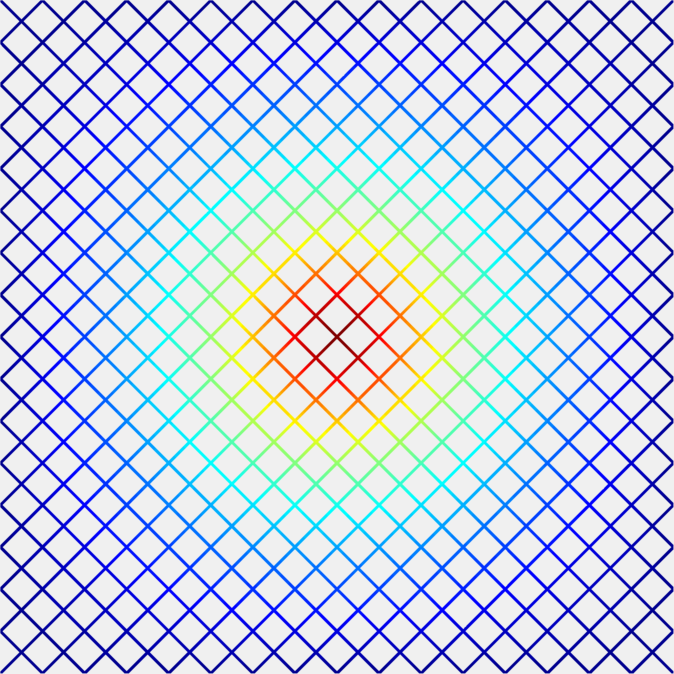}
  \hfill
  \includegraphics[height=4cm]{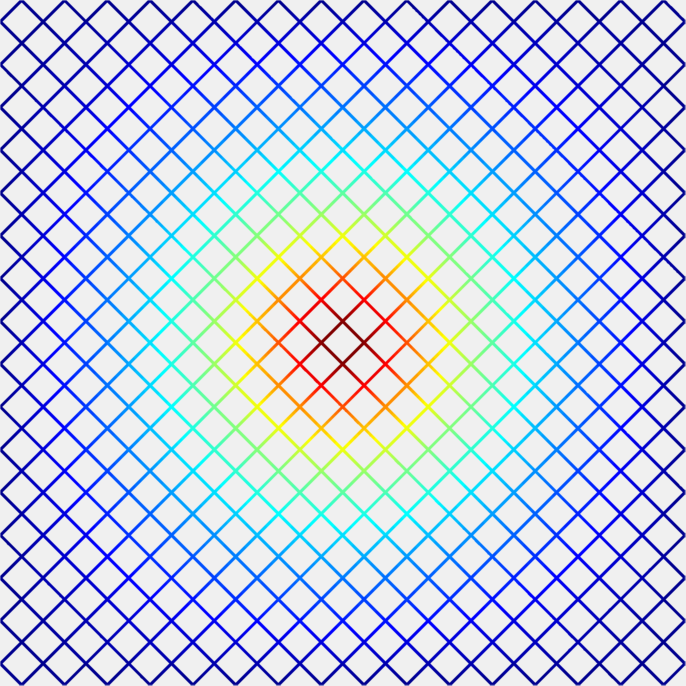}
  \hfill
  \null
  \caption{Two pattern $\Gamma_Y$ with the  same homogenized tensor $\Abbh = \tfrac{1}{2} \Ibb_2$ as the one of
    Example~\ref{example_plus} ({\em top row}) and temperature distribution on the mesh $\Omegad$
    for $\delta=1/16$ at time $\tf=2$ ({\em bottom row}).}
  \label{fig:other_patterns_plus}
\end{figure}

For the curiosity of the reader we show two patterns in
Fig.~\ref{fig:other_patterns_plus} that extended in all direction represent the
same mesh as the mesh of pattern in Example~\ref{example_plus},
cf. Fig.~\ref{fig:examples_1_2}(a). %
One can easily verify that for the two patterns the homogenized tensor equals
$\tfrac{1}{2} \Ibb_2$ as for the one of Example~\ref{example_plus}, \ie, the
solution on the mesh~$\Omegad$ has macroscopically at leading order the same
behaviour.  This we observe in the numerical experiments
(cf. Fig.~\ref{fig:plots_case1}(a) and Fig.~\ref{fig:other_patterns_plus}).


%
%
%

\section*{Acknowledgement}
The authors gratefully acknowledge the financial support of their
  research cooperation "Asymptotic and algebraic analysis of nonlinear eigenvalue problems for biomechanical and photonic devices"
  through the bilateral program "Procope"
  between the German Academic Exchange Service (DAAD) based on funding
  of the German Federal Ministry of Education and Research (project ID 57334847) and the Croatian Ministry of
  Science (contract number 910-08/16-01/00209).  The research was
  partly conducted during the stay of the first author at the Graduate School
  Computational Engineering (CE) at the Technical University (TU) Darmstadt and he is grateful to the support and
  hospitality.  The authors would like to thank Luka Grubi\v{s}i\'{c}
  (University of Zagreb) and Herbert Egger (TU Darmstadt) for fruitful
  discussions.

\end{document}